\definecolor{grey}{rgb}{0.86, 0.86, 0.86}
\newcommand{\sfk}{{\mathsf{k}}}
\newcommand{\CH}{{C\!H}}
\newcommand{\bfs}{{\mathbf s}}
\newcommand{\bfx}{{\mathbf x }}
\newcommand{\bbI}{{\mathbb I}}
\newcommand{\LL}{{\mathbb L}}
\newcommand{\PP}{{\mathbb P}}
\newcommand{\bbS}{{\mathbb S}}
\newcommand{\ZZ}{{\mathbb Z}}
\newcommand{\frakG}{{\mathfrak G}}
\newcommand{\calB}{{\mathcal B}}
\newcommand{\calE}{{\mathcal E}}
\newcommand{\calF}{{\mathcal F}}
\newcommand{\calR}{{\mathcal R}}
\newcommand{\calU}{{\mathcal U}}
\newcommand{\bfF}{{\mathbf F}}
\newcommand{\lan}{{\langle}}
\newcommand{\ran}{{\rangle}}
\newcommand{\inc}{\hookrightarrow}
\newcommand{\Fl}{{Fl}}
\newcommand{\pt}{\operatorname{pt}}
\newcommand{\Spec}{\operatorname{Spec}}
\newcommand{\id}{{\operatorname{id}}}
\newcommand{\GL}{{\mbox{GL}}}
\newcommand{\uW}{{\underline{W}}}
\newcommand{\uw}{{\underline{w}}}
\newcommand{\uv}{{\underline{v}}}
\newcommand{\uu}{{\underline{u}}}
\newcommand{\uc}{{\underline{c}}}
\newcommand{\BS}{\mathrm{BS}}
\def \Xo {{\mathring{X}}}
\renewcommand{\a}{\mathbb{\alpha}}
\newcommand{\kk}{{{\mathsf{k}}}}
\newcommand{\vu}{{{\underline{v}}}}
\newcommand{\wu}{{{\underline{w}}}}
\newcommand{\Wu}{{{\underline{W}}}}
\DeclareMathOperator{\lo}{LO}
\DeclareMathOperator{\lp}{LP}
\DeclareMathOperator{\rp}{RP}
\newcommand{\scal}[1]{\langle #1 \rangle}
\newcommand{\Laz}{\mathbb{L}}
\newcommand{\SM}{\mathbf{Sm}_\sfk}
\newtheorem{thm}{Theorem}[section]  
\newtheorem{cor}[thm]{Corollary} 
\newtheorem{lem}[thm]{Lemma}  
\newtheorem{prop}[thm]{Proposition} 
\newtheorem{df-pr}[thm]{Definition-Proposition}
\theoremstyle{definition} 
\newtheorem{defn}[thm]{Definition}
\newtheorem{rem}[thm]{Remark}
\newtheorem{exm}[thm]{Example}
\numberwithin{equation}{section} 
\begin{document} 
\title{Stability of Bott--Samelson classes in algebraic cobordism}
\author{Thomas Hudson, Tomoo Matsumura and Nicolas Perrin}
\maketitle 
\begin{abstract}
In this paper, we construct {\it stable Bott--Samelson  classes} in the projective limit of the algebraic cobordism rings of full flag varieties, upon an initial choice of a reduced word in a given dimension. Each stable Bott--Samelson class is represented by a bounded formal power series modulo symmetric functions in positive degree. We make some explicit computations for those power series in the case of infinitesimal cohomology. We also obtain a formula of the restriction of Bott--Samelson classes to smaller flag varieties. 
\end{abstract}
\section{Introduction}
Let $\sfk$ be an algebraically closed field of characteristic $0$. Let $\Fl_n$ be the flag variety of complete flags in $\sfk^n$. It can be identified with the homogeneous space $\GL_n(\sfk)/B$ where $B$ is the Borel subgroup of upper triangular matrices. For each permutation $w\in S_n$, the corresponding Schubert variety $X_w^{(n)} \subset \Fl_n$ is defined as $\overline{B_-wB}$, the closure of the orbits of $wB$ by the action of  the opposite Borel subgroup $B_-$. If $\iota_n: \Fl_n \to \Fl_{n+1}$ is the natural embedding, the cohomology fundamental classes of these Schubert varieties have the property that $\iota_n^*[X_w^{(n+1)}] = [X_w^{(n)}]$, {\it i.e.}, {\it the Schubert classes are stable under the pullback maps}. The exact analogue of this property also holds in $K$-theory, in which one defines the Schubert classes as the $K$-theory classes of the structure sheaves of Schubert varieties.

In this paper, we attempt to generalize the above notion of stability to Bott-Samelson classes in algebraic cobordism. The algebraic cobordism, denoted  by $\Omega^*$, was introduced by Levine--Morel in \cite{LevineMorel} and represents the universal object among oriented cohomology theories, a family of functors which includes both the Chow ring $\CH^*$ and $K^0[\beta,\beta^{-1}]$, a graded version of the Grothendieck ring of vector bundles. 
  In recent years a lot of energy has been spent to lift results of Schubert calculus to $\Omega^*$, in the same way in which Bressler--Evens did in \cite{BresslerEvens1,BresslerEvens2} for topological cobordism. The first works in this direction were those of Calm\'es--Petrov--Zanoulline \cite{CalmesPetrovZanoulline} and Hornbostel--Kiritchenko \cite{HornbostelKiritchenkoCrelle} who investigated the algebraic cobordism of flag manifolds. Later, the interest shifted to Grassmann and flag bundles (cf. \cite{KiritchenkoKrishnaEquivariant},  \cite{CalmesZainoullineZhong},  \cite{HudsonMatsumura},\cite{HudsonMatsumura2},  \cite{HudsonMatsumurainf}, \cite{HornbostelPerrin}, \cite{Hudson1}, \cite{Hudson2}). One of the main difficulty of Schubert calculus in algebraic cobordism is caused by the fact that the fundamental classes of Schubert varieties are not well-defined in general oriented cohomology theories. A candidate for the replacement of Schubert classes is the family of the push-forward classes of Bott--Samelson resolutions of Schubert varieties. 

Since a Bott--Samelson variety is defined upon a choice of a reduced word,  thus our stability of Bott--Samelson classes depends on a particular choice of a {\it sequence of} reduced words. The followings are the main results in this paper: (1) For a given Bott--Samelson variety $Y_n$ in $\Fl_n$, we construct a sequence of Bott-Samelson varieties $Y_m$ over $\Fl_m$ for $m\geq n$  such that their push-forward classes in algebraic cobordism are stable under pullbacks, namely, the identity $\iota_m^* [Y_{m+1} \to \Fl_{m+1}] = [Y_m \to \Fl_m]$ holds in $\Omega^*(\Fl_m)$ for all $m\geq n$; (2) For a given Bott--Samelson variety $Y_n$ over $\Fl_n$, we find an explicit formula for the pullback $\iota_{n-1}^*[Y_n \to \Fl_n]$ of its push-forward class in $\Omega^*(\Fl_{n-1})$.

The pullback maps $\iota_n^*: \Omega^*(\Fl_{n+1}) \to \Omega^*(\Fl_{n})$ give rise to a projective system of graded rings. Based on the ring presentation of $\Omega^*(\Fl_{n})$ obtained by Hornbostel--Kiritchenko \cite{HornbostelKiritchenkoCrelle}, we observe that their graded projective limit, denoted by $\calR$, is isomorphic to the graded ring of {\it bounded formal power series} in an infinite sequence of variables $x=(x_i)_{i\in\ZZ_{>0}}$ with coefficients in the {\it Lazard ring} $\LL$ modulo the ideal of symmetric functions of positive degrees in $x$. Our stable sequence of Bott--Samelson classes determine a class in this limit, which we call a {\it stable Bott--Samelson class}. On each $\Omega^*(\Fl_n)$ the divided difference operators commute with the pullback maps and therefore lift to the limit $\calR$. This gives a method of computing the power series representing stable Bott--Samelson classes, which we apply to the case of a chosen infinitesimal cohomology theory. In particular, we obtain a formula for the power series representing stable Bott--Samelson classes associated to {\it dominant permutations}.

In \cite{HudsonMatsumura, HudsonMatsumura2}, the first and second authors obtained determinant formula of the cobordism push-forward classes of so-called Damon--Kempf--Laksov resolutions, generalizing the classical Damon--Kempf--Laksov determinant formula of Schubert classes. In \cite{HudsonMatsumurainf}, more explicit formula of Damon--Kempf--Laksov classes were obtained for infinitesimal cohomology. While these resolutions only exist for Schubert varieties associated to  vexillary permutations (like for instance Grassmannian elements), their push-forward classes are stable and so is their determinantal formula. On the other hand, Naruse--Nakagawa \cite{NakagawaNaruse1,NakagawaNaruse2, NakagawaNaruse3, NakagawaNaruse4} achieved, by considering a different resolution, a stable generalization of the Hall--Littlewood type formula for Schur polynomials in the context of topological cobordism. The differences among these stable expressions, including the ones obtained in this paper, should reflect the geometric nature of the different resolutions, each of which gives a different class in cobordism. 

The paper is organized as follows. 
In Section \ref{sec:prelim}, we recall basic facts about the algebraic cobordism ring of flag varieties and, in particular, we identify their projective limit.
In Section \ref{sec:stab}, we review the definition of Bott--Samelson resolutions and show the stability of their push-forward classes in cobordism based on the choice of a sequence of reduced words. We then focus on infinitesimal cohomology theory and compute, using divided difference operators, the power series representing the limits of the classes associated to dominant permutations.
In Section \ref{sec:rest}, we prove a formula for the product of any Bott--Samelson class with the class $[\Fl_{n-1} \to \Fl_n]$, generalizing the restriction formula given in Section \ref{sec:stab}.

\section{Preliminary}\label{sec:prelim}
Let $\sfk$ be an algebraic closed field of characteristic $0$. 
\subsection{Basics on algebraic cobordisms}\label{subsec:algcob}
For the reader's convenience, we will briefly recall some basic facts about algebraic cobordism and infinitesimal theories. More details on the construction and the properties of $\Omega^*$ can be found in \cite{LevineMorel}, while a more comprehensive description of $I_n^*$ is given in \cite{HudsonMatsumurainf}.

Both $\Omega^*$ and $I_n^*$ are examples of oriented cohomology theories, a family of contravariant functors $A^*:\SM \rightarrow \mathcal{R}^*$ from the category of smooth schemes to graded rings, which are furthermore endowed with push-forward maps for projective morphisms. Such functors are required to satisfy, together with some expected functorial compatibilities, the projective bundle formula and the extended homotopy property. These imply that, for every vector bundle $E\rightarrow X$, one is able to describe the evaluation of $A^*$ on the associated projective bundle $\PP(E)\rightarrow X$ as well as on every $E$-torsor $V\rightarrow X$. The Chow ring $\CH^*$ is probably the most well-known example of oriented cohomology theory and it should be kept in mind as a first approximation to the general concept.  

As a direct consequence of the projective bundle formula one has that every oriented cohomology theory admits a theory of Chern classes, which can be defined using Grothendieck's method. These satisfy most of the expected properties, like for instance the Whitney sum formula, however it is no longer true that the first Chern class behaves linearly with respect to tensor product: this is a key difference with $\CH^*$. For a pair of line bundles $L$ and $M$ defined over the same base, classically one has
\begin{eqnarray}\label{eqn additive law}
c^{\CH}_1(L\otimes M)=c^{\CH}_1(L)+c^{\CH}_1(M) \text{\ \ and\  \ }c_1^{\CH}(L^\vee)=-c_1^{\CH}(L),       
\end{eqnarray}
but these equalities in general fail for $c_1^A$. Instead, in order to describe $c_1^A(L\otimes M)$, it becomes necessary to introduce a \textit{formal group law}, a power series in two variables  defined over the coefficient ring $F_A\in A^*(\Spec\,k)[[u,v]]$ satisfying some requirements. Similarly, expressing $c_1^{A}(L^\vee)$ in terms of $c_1^{A}(L)$ requires one to consider the \textit{formal inverse} $\chi_A\in A^*(\Spec\, k)[[u]]$. The analogues of (\ref{eqn additive law}) then become  
\begin{eqnarray}
c^{A}_1(L\otimes M)=F_A(c^{A}_1(L),c^{A}_1(M)) \text{\ \ and\  \ }c_1^{A}(L^\vee)=\chi_A(c_1^{A}(L)).       
\end{eqnarray}
It is a classical result of Lazard \cite{Lazard} that every formal group law $(R,F_R)$ can be obtained from the universal one  $(\Laz,F_\Laz)$, which is defined over a ring later named after him. He also proved that, as a graded ring, $\Laz=\bigoplus_{m\leq 0} \LL^m$ is isomorphic to a polynomial ring in countably many variables $y_i$, each appearing in degree $-i$ for $i\geq 1$. In the case of a field of characteristic 0, Levine and Morel were able to prove that the coefficient ring of algebraic cobordism is isomorphic to $\Laz$ and that its formal group law $F_\Omega$ coincides with the universal one, which from now on we will simply denote $F$. The universality of $\Omega^*$ does not restrict itself only to its coefficient ring, in fact, Levine and Morel were able to prove the following theorem.

\begin{thm}[\protect{\cite[Theorem 1.2.6]{LevineMorel}}]
$\Omega^*$ is universal among oriented cohomology theories on $\SM$. That is, for any other oriented cohomology theory $A^*$ there exists a unique morphism 
$$\vartheta_A:\Omega^*\rightarrow A^*$$
of oriented cohomology theories.
\end{thm}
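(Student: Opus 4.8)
The plan is to build $\Omega^*$ by hand as a quotient of an explicit ``free'' theory and then to read the universal property off the shape of the defining relations. Since the axioms are cleanest on the covariant side, I would first construct the oriented Borel--Moore homology theory $\Omega_*$ and obtain $\Omega^*$ via $\Omega^j(X)=\Omega_{\dim X-j}(X)$ for $X\in\SM$. The starting point is, for each $X$, the group of cobordism cycles $\mathcal{Z}_*(X)$: the free abelian group on isomorphism classes of projective morphisms $f\colon Y\to X$ with $Y$ smooth and irreducible, graded by $\dim Y$. This carries tautological pushforwards along projective maps, pullbacks along smooth morphisms, external products over $\Spec\sfk$, and first Chern class operators $\tilde c_1(L)$ for line bundles $L$ on the sources $Y$.

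I would then impose relations in two stages, following Levine--Morel. The first stage ties the operators $\tilde c_1(L)$ to the geometry of sections: if $s$ is a section of $L$ on $Y$ meeting the zero section transversally along $i\colon Z\hookrightarrow Y$, one sets $\tilde c_1(L)\bigl([Y\to X]\bigr)=[Z\hookrightarrow Y\to X]$, and one adds the commutativity and nilpotence relations for these operators; the result is the universal $\ZZ$-valued oriented Borel--Moore homology theory $\underline{\Omega}_*$. The second stage extends scalars to the Lazard ring and imposes the formal group law: one forms $\underline{\Omega}_*\otimes_{\ZZ}\Laz$ and quotients by the relations expressing $\tilde c_1(L\otimes M)$ as $F_\Laz\bigl(\tilde c_1(L),\tilde c_1(M)\bigr)$, interpreted through the geometric meaning of $\tilde c_1$. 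The resulting functor is $\Omega_*$, and from it one defines $\Omega^*$.

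The substantial part of the argument, and the step I expect to be the main obstacle, is to verify that this $\Omega^*$ genuinely satisfies the axioms of an oriented cohomology theory: homotopy invariance, the projective bundle formula, the localization sequence, and the identification $\Omega^*(\Spec\sfk)\cong\Laz$ with $F_\Omega=F$. This is where resolution of singularities and the weak factorization theorem enter essentially --- hence the standing hypothesis $\operatorname{char}\sfk=0$ --- for instance in proving the generalized degree formula and in checking that the class $[Y\to X]$ of a projective morphism from a possibly singular $Y$, defined after choosing a resolution, does not depend on that resolution. I would carry this out exactly as in \cite{LevineMorel}, where it occupies the bulk of the monograph.

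Granting that $\Omega^*$ is an oriented cohomology theory, the universal property is essentially formal. Given another oriented cohomology theory $A^*$, I would define $\vartheta_A\colon\Omega^*\to A^*$ on generators by $[f\colon Y\to X]\mapsto f_*(1_Y)$. Since $A^*$ carries its own theory of Chern classes and a formal group law $F_A$, and since Lazard's theorem provides a unique ring homomorphism $\Laz\to A^*(\Spec\sfk)$ sending $F_\Laz$ to $F_A$, one checks that $\vartheta_A$ annihilates both the section relations and the formal group law relations; hence it descends to $\Omega^*$ and is a morphism of oriented cohomology theories. Uniqueness is immediate: any morphism $\Omega^*\to A^*$ of oriented cohomology theories commutes with projective pushforwards and preserves the unit, so it must send $[f\colon Y\to X]=f_*(1_Y)$ to $f_*(1_Y)$, which already determines it on the generators of $\Omega^*$.
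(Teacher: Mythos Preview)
The paper does not give its own proof of this statement; it is quoted verbatim from \cite[Theorem~1.2.6]{LevineMorel} as background, with no argument supplied. Your proposal is a faithful high-level outline of Levine--Morel's actual construction and of why universality then follows, so there is nothing to compare: you have sketched precisely the source the paper is citing, and the sketch is correct in its broad strokes.
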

It essentially follows formally from this result that for any given formal group law $(R,F_R)$ the functor $\Omega^*\otimes_\Laz R$ is universal among the oriented cohomology theories with $R$ as coefficient ring and $F_R$ as associated law. This procedure can be used to produce functors, like the infinitesimal theories $I_n^*$, whose formal group laws are far simpler than the universal one and as a consequence more suitable for explicit computations. More precisely the projection $\Laz\rightarrow \ZZ[y_n]/(y_n^2)$, which maps $y_i$ to 0 unless $i=n$, gives rise to the following formal group law $F_{I_n}$ on $\ZZ[y_n]/(y_n^2)$: 
\begin{eqnarray}\label{eqn formal group inf}
F_{I_n}(u,v)=u+v+y_n\cdot \frac{1}{d_n}\sum_{j=1}^n \binom{n+1}{j}u^{j}v^{n+1-j}.
\end{eqnarray}
Here one has $d_n=p$, if $n+1$ is a power of a prime $p$, and $d_n=1$ otherwise. In our computations we will only consider the case $n=2$, for which (\ref{eqn formal group inf}) becomes 
$$u\boxplus v:=F_{I_2}(u,v)= u+v+y_2(u^2v+uv^2)=(u+v)(1+y_2 uv)$$
with the formal inverse being $\boxminus u:=\chi_{I_2}(u)=-u$. For the remainder of the paper we will write $\gamma$ instead of $y_2$.          

Let us finish this overview by discussing fundamental classes, another aspect in which a general oriented cohomology theory differs from $\CH^*$. While in $\CH^*$ it is possible to associate such a class to every equi-dimensional scheme, for a general oriented cohomology theory $A^*$ one has to restrict to schemes whose structure morphism is a local complete intersection. In particular, since not all Schubert varieties satisfy this requirement, it becomes necessary to find an alternative definition for Schubert classes. One possible option is to choose a family of resolutions of singularities and replace the fundamental classes of Schubert varieties with the pushforwards of the associated resolutions.           
\subsection{Algebraic cobordism of flag varieties and their limit}\label{subsec:flags}
For any integers $a,b$ such that $a\leq b$, let $[a,b]:=\{a,a+1,\dots, b\}$. Let $\sfk^{\ZZ_{>0}}$ be the infinite dimensional vector space generated by a formal basis $(e_i)_{i \in \ZZ_{>0}}$. For each $m\in \ZZ_{>0}$, let $E_m$ be the subspace of $\sfk^{\ZZ_{>0}}$ generated by $e_1,\dots, e_m$. We set $E_0=0$. We often identify $E_m$ with $\sfk^m$ the space of column vectors.

For each $n\in \ZZ_{>0}$, the flag variety $\Fl_n$ consists of flags $U_{\bullet}=(U_i)_{i \in [1,n-1]}$ of subspaces in $E_n$ where $U_i \subset U_{i+1}$ and $\dim U_i =i$ for each $i\in [1,n-1]$. Note that this implies $U_n=E_n$. For a fixed $n$, let $\calU_i^{(n)}, i\in [0,n]$ denote the tautological vector bundles of $\Fl_n$  and $\calE_i$ the trivial bundles of fiber $E_i$. In particular, $\calU_0^{(n)}=0$ and $\calU_n^{(n)}=\calE_n$. 

Let $\GL_n(\sfk)=\GL(E_n)$ be the general linear group. We consider the maximal torus $T_n \subset \GL_n(\sfk)$ given by the matrices having $(e_i)_{i \in [1,n]}$ as a basis of eigenvectors and the Borel subgroup $B_n \subset \GL_{n}(\sfk)$ given by the upper triangular matrices stabilizing the flag $E_\bullet = (E_i)_{i\in [1,n-1]}$ in $\Fl_n$. We can identify $\Fl_n$ with the homogeneous space $\GL_n(\sfk)/B_n$ by associating the matrix $M=(u_1,\dots, u_n)$ to a flag $U_{\bullet}$ where $\{u_j\}_{j\in [1,i]}$ is a basis of $U_i$. 

There is an isomorphism of graded rings (\cite[Thm 1.1]{HornbostelKiritchenkoCrelle})
\begin{equation}\label{isoiota}
\Omega^*(\Fl_n) \cong \LL[x_1,\dots, x_n]/\bbS_n  
\end{equation}
sending $c_1((\calU_i^{(n)}/\calU_{i-1}^{(n)})^{\vee})$ to $x_i$, where $\bbS_n$ is the ideal generated by the homogeneous symmetric polynomials in $x_1,\dots, x_n$ of strictly positive degree.

Let $\iota_n: \Fl_n\inc \Fl_{n+1}$ be the embedding induced by the canonical inclusion $E_n\inc E_{n+1}$. We have $\iota_n^* \calU_i^{(n+1)} = \calU_i^{(n)}$ for all $i\in [1,n]$ and $\iota_n^* \calU_{n+1}^{(n+1)} = \calE_{n+1}$. As a consequence, under the isomorphism (\ref{isoiota}), the pullback map $\iota_n^*: \Omega^*(\Fl_{n+1}) \to \Omega^*(\Fl_n)$ is the natural projection given by setting $x_{n+1}=0$. For each $m\in \ZZ$, let $\calR^m$ be the projective limit of $\Omega^m(\Fl_n)$ with respect to $\iota_n^*$. We define the {\it graded projective limit} of $\Omega^*(\Fl_n)$ with respect to $\iota_n^*$ to be $\calR:=\bigoplus_{m\in \ZZ} \calR^m$. 

In order to give a ring presentation of $\calR$, we introduce the following ring of formal power series. Let $x=(x_i)_{i\in \ZZ_{>0}}$ be a sequence of infinitely many indeterminates. Let $\ZZ^{\infty}$ be the set of infinite sequence $\bfs=(s_i)_{i\in \ZZ_{>0}}$ of nonnegative integers such that all but finitely many $s_i$'s are zero. Let $\LL[[x]]^{(m)}$ be the space of formal power series of degree $m\in \ZZ$. An element $f(x)$ of $\LL[[x]]^{(m)}$ is uniquely given as
\[
f(x) = \sum_{\bfs\in \ZZ^{\infty}} a_{\bfs}x^{\bfs}, \ \ \ a_{\bfs}\in \LL, \ \ x^{\bfs} = \prod_{i=1}^{\infty} x_i^{s_i}
\]
such that $|\bfs| + \deg a_{\bfs} = m$ where $|\bfs|=\sum_{i=0}^{\infty} s_i$ and $\deg a_{\bfs}$ is the degree of $a_{\bfs}$ in $\LL$. An element $f(x)\in \LL[[x]]^{(m)}$ is {\it  bounded} if $p_n(f(x))\in \LL[x_1,\dots,x_n]^{(m)}$, where $p_n$ is the substitution of $x_k=0$ for all $k>n$ and $\LL[x_1,\dots, x_n]^{(m)}$ is the degree $m$ part of $\LL[x_1,\dots,x_n]$.  Let $\LL[[x]]_{bd}^{(m)}$ be the set of all such bounded elements of $\LL[[x]]^m$. We set 
\[
\LL[[x]]_{bd}:=\bigoplus_{m\in \ZZ} \LL[[x]]_{bd}^{(m)}.
\]
This is a graded sub $\LL$-algebra of the ring $\LL[[x]]$ of formal power series.
\begin{prop}\label{RepPower}
There is an isomorphism of graded $\LL$-algebras
\[
\calR \cong \LL[[x]]_{bd}/\bbS_{\infty}
\]
where $\bbS_{\infty}$ is the ideal of $\LL[[x]]_{bd}$ generated by symmetric functions in $x$ of strictly positive degree.
\end{prop}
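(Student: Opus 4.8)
The plan is to compare the projective system $(\Omega^*(\Fl_n),\iota_n^*)$ with the system obtained by truncating power series, using the explicit presentations already available. First I would fix, for each $n$, the presentation $\Omega^*(\Fl_n)\cong\LL[x_1,\dots,x_n]/\bbS_n$ of \eqref{isoiota}, under which $\iota_n^*$ is the map killing $x_{n+1}$. Passing to the graded projective limit, $\calR^m=\varprojlim_n\Omega^m(\Fl_n)$, an element of $\calR^m$ is by definition a compatible sequence $(f_n)_n$ with $f_n\in\left(\LL[x_1,\dots,x_n]/\bbS_n\right)^{(m)}$ and $f_{n+1}|_{x_{n+1}=0}=f_n$. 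The key point is to produce, from such a sequence, a well-defined bounded power series modulo $\bbS_\infty$, and conversely; so the bulk of the argument is the construction of the map and the verification that it is a well-defined graded $\LL$-algebra isomorphism.

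The natural candidate map goes $\LL[[x]]_{bd}\to\calR$ by $f(x)\mapsto (p_n(f(x))\bmod\bbS_n)_n$; boundedness of $f$ is exactly what guarantees $p_n(f)\in\LL[x_1,\dots,x_n]^{(m)}$, and compatibility with $\iota_n^*$ is clear since $p_n$ factors through $p_{n+1}$ followed by $x_{n+1}=0$. This descends to $\LL[[x]]_{bd}/\bbS_\infty\to\calR$ because a symmetric function in $x$ of positive degree maps under $p_n$ to a symmetric polynomial in $x_1,\dots,x_n$ of positive degree, hence into $\bbS_n$. For injectivity: if $f$ has $p_n(f)\in\bbS_n$ for all $n$, I would need to show $f\in\bbS_\infty$. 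Here I would argue degree by degree — in fixed total degree $m$ everything is a finite-rank $\LL$-module computation — and use that $\bbS_\infty$ is (in each degree) the inverse limit of the $\bbS_n$ in a suitable sense; concretely, lift a symmetric-function expression for $p_n(f)$ and take a limit, using that the number of variables needed stabilizes in each fixed degree once $n$ is large. Surjectivity is where boundedness must be exploited most carefully: given a compatible sequence $(f_n)$, choose polynomial representatives and show they can be chosen compatibly so that they assemble into a genuine power series; the representatives $f_n$ a priori only agree after setting $x_{n+1}=0$ \emph{modulo} $\bbS_n$, so one must correct by elements of $\bbS_{n+1}$ to get honest equality $f_{n+1}|_{x_{n+1}=0}=f_n$, and then check the resulting limit power series is bounded — which it is, essentially by construction, since its $n$-th truncation is $f_n$ up to $\bbS_n$.

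I expect the main obstacle to be precisely this last compatibility/correction step in the surjectivity argument: upgrading the "compatible modulo $\bbS_n$" data to "compatible on the nose" while keeping control of degrees, so that the $\varprojlim$ of polynomial representatives lands in $\LL[[x]]_{bd}$ rather than in some larger completion. The clean way to handle it is to work in a fixed degree $m$, where each $\Omega^m(\Fl_n)$ and each $\LL[x_1,\dots,x_n]^{(m)}_{\le m}$ is a finitely generated $\LL^{\le 0}$-module (finite rank over $\LL$ in each internal degree), so that $\varprojlim^1$ vanishes for the tower of representatives once one arranges surjective transition maps; this is a standard Mittag--Leffler argument. Then one checks directly that the resulting $f$ satisfies $p_n(f)\equiv f_n \pmod{\bbS_n}$ and that it is bounded because $p_n(f)$, being congruent to a polynomial in $x_1,\dots,x_n$, differs from one by an element of $\bbS_n\subset\LL[x_1,\dots,x_n]$. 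Finally, the map is a graded $\LL$-algebra homomorphism because $p_n$ is, and it respects the grading since all constructions are carried out degree by degree. Assembling these gives the claimed isomorphism $\calR\cong\LL[[x]]_{bd}/\bbS_\infty$.
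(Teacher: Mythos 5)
Your outline is essentially the paper's proof: both reduce to a degree-by-degree comparison of towers via the truncations $p_n$, with the key point being that compatible sequences in $\varprojlim_n\bigl(\LL[x_1,\dots,x_n]^{(m)}\cap\bbS_n\bigr)$ lift to $\LL[[x]]_{bd}^{(m)}\cap\bbS_\infty$. The paper packages this slightly more cleanly by first establishing the unquotiented isomorphism $\LL[[x]]_{bd}^{(m)}\cong\varprojlim_n\LL[x_1,\dots,x_n]^{(m)}$ and then matching the ideals before passing to quotients, which is precisely the ``clean way'' of handling the Mittag--Leffler correction that you correctly identify as the main obstacle.
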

\begin{proof}
Let $m\in \ZZ$. The projections $p_n:\LL[[x]]_{bd}^{(m)} \to \LL[x_1,\dots, x_n]^{(m)}$ for $n>0$ induce a surjective homomorphism 
\[
\Phi:\LL[[x]]_{bd}^{(m)} \to \lim_{n\to \infty}\LL[x_1,\dots, x_n]^{(m)}
\]
sending $f(x)$ to $\{p_n(f(x))\}_{n\in \ZZ_{>0}}$. It is also easy to see that $\Phi$ is injective, and thus an isomorphism. Moreover, $p_n$'s induce surjections 
\[
\LL[[x]]_{bd}^{(m)} \cap \bbS_{\infty} \to \LL[x_1,\dots, x_n]^{(m)} \cap \bbS_n, \ \ \ \ n>0,
\]
inducing a bijection
\[
\LL[[x]]_{bd}^{(m)} \cap \bbS_{\infty} \cong \lim_{n\to \infty} \left(\LL[x_1,\dots, x_n]^{(m)} \cap \bbS_n\right).
\]
Thus we obtain the isomorphism
\[
\bigoplus_{m\in \ZZ} \LL[[x]]_{bd}^{(m)}/(\LL[[x]]_{bd}^{(m)} \cap \bbS_{\infty}) \cong \bigoplus_{m\in \ZZ} \lim_{n\to \infty} \LL[x_1,\dots, x_n]^{(m)}/(\LL[x_1,\dots, x_n]^{(m)} \cap \bbS_n),
\]
which is the desired one.
\end{proof}
\begin{defn}
An element in  $\calR^i$  is a sequence $\left(\alpha_n\right)_{n\in \ZZ_{>0}}$ such that $\alpha_n \in \Omega^i(\Fl_n)$ and $\iota_n^*(\alpha_{n+1})=\alpha_n$ for all $n>0$. An element of $\calR$ is  a finite linear combinations of such sequences and we call it a {\it stable class}.
\end{defn}
\begin{rem}
In order to specify an element of $\calR^i$, we only need to provide $\alpha_i$ for all $i \geq N$ for some fixed integer $N$. In fact, for $i<N$ the elements $\alpha_i$ can be obtained from $\alpha_N$ by applying the projections $\iota_n^*$.
\end{rem}
\subsection{Divided difference operators}
Let $W_n$ be the Weyl group of $\GL_{n}(\sfk)$. The maximal torus $T_{n}$ and the Borel subgroup $B_{n}$ define a system of simple reflections $s_1,\cdots,s_{n-1} \in W_n$ and we can identify $W_n$ with the symmetric group $S_{n}$ in $n$ letters, where each $s_i$ corresponds to the transposition of the letters $i$ and $i+1$. We denote the length of $w$ by $\ell(w)$.

For each $i\in [1,n-1]$, the {\it divided difference operator} $\partial_i$ is an operator on $\Omega^*(\Fl_n)$ defined as follows. Let $\Fl_n^{(i)}$ be the partial flag variety consisting of flags of the form $U_1\subset \cdots \subset U_{i-1} \subset U_{i+1}\subset\cdots \subset U_{n-1}$ with $\dim U_k =k$. Denote the canonical projection $\Fl_n \to \Fl_n^{(i)}$ by $p_i$. Then define $\partial_i := p_{i*}\circ p_i^*$. It is known from \cite{HornbostelKiritchenkoCrelle} that under the presentation (\ref{isoiota}), we have
\begin{equation}\label{ddo}
\partial_i (f(x)) = (\id + s_i)\frac{f(x)}{F(x_i,\chi(x_{i+1}))} = \frac{f(x)}{F(x_i,\chi(x_{i+1}))} + \frac{s_if(x)}{F(x_{i+1},\chi(x_{i}))}.
\end{equation}
\begin{lem}
The pullback $\iota_n^*: \Omega^*(\Fl_{n+1}) \to \Omega^*(\Fl_{n})$ commutes with $\partial_i$ for all $i\in [1,n-1]$. In particular, this shows that $\partial_i$ can be defined in the projective limit $\calR$ and it is given by the formula (\ref{ddo}). 
\end{lem}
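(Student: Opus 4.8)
The plan is to reduce the claim to a diagram-chasing statement about push-pull along the two compatible fibrations, and then to deduce the formula in $\calR$ formally. First I would set up the geometry carefully. For each $n$ and each $i \in [1,n-1]$, recall $p_i \colon \Fl_n \to \Fl_n^{(i)}$ is the map forgetting the $i$-th subspace, a $\PP^1$-bundle. The embedding $\iota_n \colon \Fl_n \hookrightarrow \Fl_{n+1}$ induced by $E_n \hookrightarrow E_{n+1}$ restricts on the partial flag varieties to an embedding $\iota_n^{(i)} \colon \Fl_n^{(i)} \hookrightarrow \Fl_{n+1}^{(i)}$, and one checks directly from the moduli descriptions that the square
\begin{equation*}
\begin{CD}
\Fl_n @>{\iota_n}>> \Fl_{n+1}\\
@V{p_i}VV @VV{p_i}V\\
\Fl_n^{(i)} @>{\iota_n^{(i)}}>> \Fl_{n+1}^{(i)}
\end{CD}
\end{equation*}
commutes and is moreover \emph{transverse}: $\Fl_n$ is exactly the fiber product $\Fl_n^{(i)} \times_{\Fl_{n+1}^{(i)}} \Fl_{n+1}$, since a flag in $E_{n+1}$ lying over a flag of subspaces of $E_n$ (of the partial shape) automatically has all its subspaces inside $E_n$. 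Hence the square is a transverse fiber square of smooth schemes with the vertical maps projective and smooth.

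Second, I would invoke the base-change (or refined pull-back) compatibility of push-forward in an oriented cohomology theory: for a transverse cartesian square as above with the vertical maps projective, one has $\iota_n^{(i)\,*} \circ p_{i*} = p_{i*} \circ \iota_n^*$ as maps $\Omega^*(\Fl_{n+1}) \to \Omega^*(\Fl_n^{(i)})$. This is part of the axioms of an oriented cohomology theory in the sense of Levine--Morel (compatibility of push-forward with pull-back along transverse morphisms; the vertical maps being smooth projective makes the square Tor-independent). Combining this with the obvious functoriality $\iota_n^{(i)\,*} \circ p_i^* = p_i^* \circ \iota_n^*$ (contravariance of $\Omega^*$), I get
\begin{equation*}
\iota_n^* \circ \partial_i = \iota_n^* \circ p_{i*} \circ p_i^* = p_{i*} \circ \iota_n^{(i)\,*} \circ p_i^* = p_{i*} \circ p_i^* \circ \iota_n^* = \partial_i \circ \iota_n^*,
\end{equation*}
which is the first assertion. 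Alternatively — and this gives a second, purely algebraic proof worth including as a cross-check — one uses the presentation (\ref{isoiota}) together with formula (\ref{ddo}): since $\iota_n^*$ is just $x_{n+1} \mapsto 0$, and for $i \le n-1$ neither $x_i$, $x_{i+1}$, nor $s_i$ involves the variable $x_{n+1}$, setting $x_{n+1}=0$ visibly commutes with the operator $f \mapsto (\id + s_i)\bigl(f / F(x_i,\chi(x_{i+1}))\bigr)$; one only has to note that $\partial_i$ preserves the subspace of each $\LL[x_1,\dots,x_N]/\bbS_N$ and is compatible with the projections, so the computation descends to the quotients.

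Third, for the ``in particular'' part: an element of $\calR^j$ is a compatible sequence $(\alpha_n)_n$ with $\iota_n^*\alpha_{n+1} = \alpha_n$, so applying $\partial_i$ (defined on $\Omega^*(\Fl_n)$ for every $n \ge i+1$) to each term gives a new sequence $(\partial_i \alpha_n)_{n \ge i+1}$, and the commutation just proved shows $\iota_n^*(\partial_i \alpha_{n+1}) = \partial_i(\iota_n^* \alpha_{n+1}) = \partial_i \alpha_n$, so this sequence is again an element of $\calR$ (using the Remark that only the tail $n \gg 0$ matters). Thus $\partial_i$ is well-defined on $\calR$, $\LL$-linear, and under the isomorphism of Proposition~\ref{RepPower} it is induced by formula (\ref{ddo}) applied to bounded power series — which makes sense because, as noted, $\partial_i$ commutes with each truncation $p_n$. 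The main obstacle is purely expository rather than mathematical: one must state precisely which transversality/base-change axiom of oriented cohomology theories is being used and verify that the square above genuinely is transverse (equivalently, that $\Fl_n \to \Fl_n^{(i)} \times_{\Fl_{n+1}^{(i)}} \Fl_{n+1}$ is an isomorphism, not merely a closed immersion), since the whole geometric argument hinges on that identification; the algebraic verification via (\ref{ddo}) is then a reassuring and essentially trivial alternative.
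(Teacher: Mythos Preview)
Your proof is correct and follows essentially the same approach as the paper: set up the cartesian square over the partial flag varieties, observe transversality, and apply the base-change compatibility of push-forward with pull-back in an oriented cohomology theory to conclude $\iota_n^* \partial_i = \partial_i \iota_n^*$. You supply more detail than the paper (the explicit fiber-product identification, the algebraic cross-check via~(\ref{ddo}), and the argument for well-definedness on $\calR$), but the core geometric argument is identical.
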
 
\begin{proof}
For each $i\in [1,n-1]$, $\iota_n$ and $p_i$ form a fiber diagram
\[
\xymatrix{
\Fl_n \ar[r]_{\iota_n} \ar[d]_{p_i}& \Fl_{n+1}\ar[d]_{p_i}\\
\Fl_n^{(i)} \ar[r]_{\iota_n} & \Fl_{n+1}^{(i)},
}
\]
and, since they are transverse, we have $\iota_n^*\circ p_{i*} = p_{i*}\circ\iota_n^*$. Thus $\iota_n^*\circ\partial_i=\iota_n^* \circ p_{i*}\circ p_i^* = p_{i*}\circ \iota_n^* \circ p_i^* = p_{i*} \circ p_i^*\circ \iota_n^*= \partial_i \circ \iota_n^*$.
\end{proof}

For a permutation $w \in W_{n}$, let $\Xo_w^{(n)} = B_n \cdot w(E_\bullet)$  be the Bruhat cell associated to $w$ in $\Fl_n$, where $w(E_{\bullet})$ is the flag consisting of $w(E_i)=\lan e_{w(1)},\dots, e_{w(i)}\ran$ for each $i\in [1,n-1]$. The {\it Schubert varieties} $X_w^{(n)}$ are the closures of the Bruhat cells: $X_w^{(n)} := \overline{B_n \cdot w(E_\bullet)}$. The {\it opposite Schubert varieties} are defined via $X^w_{(n)} = w_0 \cdot X_{w_0w}^{(n)}$, where $w_0=w_0^{(n)}$ is the longest element of $W_n$. As an orbit closure, we have  $X^w_{(n)} = \overline{B_n^-\cdot w(E_\bullet)}$ where $B_n^-:=w_0B_nw_0$ is the opposite Borel subgroup of lower triangular matrices. 

\begin{rem}
The fundamental class $[X^w_{(n)}]$ of $X^w_{(n)}$ is well-defined in the Chow ring of $\Fl_n$. Those classes are stable along pullbacks, {\it i.e.},  $\iota_n^*[X^w_{(n+1)}] = [X^w_{(n)}]$ in $\CH^*(\Fl_n)$ where $w \in S_n$ is regarded as an element of $S_{n+1}$ under the natural embedding $S_n\subset S_{n+1}$. As it is well-known,  its stable limit can be identified with the Schubert polynomial of Lascoux--Sch{\"u}tzenberger \cite{LascouxSchutzenbergerA}. It is also worth mentioning that the Schubert classes admit the following compatibility with divided difference operators, reflected on the definition of Schubert polynomials: for each $i\in[1,n-1]$, we have
\[
\partial_i [X^w_{(n)}] = \begin{cases}
[X^{ws_i}_{(n)}] & \ell(ws_i)=\ell(w) + 1, \\
0 & \mbox{otherwise}.
\end{cases}
\]
\end{rem}
\subsection{Some facts on permutations and reduced words}
We conclude this section by fixing notations for reduced words and showing a few lemmas and a proposition that will be used in the rest of the paper.

We denote by $\uW_n$ the set of words in $s_1,\dots, s_{n-1}$:  an element of $\uW_{n}$ will be written as a finite sequence $s_{i_1} \cdots s_{i_r}$, while the empty word is denoted by $1$. The {\it length} of a word $\uw=s_{i_1} \cdots s_{i_r}$ is the number $r$ of the letters $s_i$'s in $\uw$ and we denote it by $\ell(\uw)$.  For a word $\uw\in \uW_n$, we denote the corresponding permutation by $w \in W_n$. Let  $W_n^i$ be the subgroup of $W_{n}$ generated by all simple reflections $s_j$ with $j\not=i$ and $\uW_n^i$ the corresponding set of words. In particular, we can identify $W_n$ with $W_{n+1}^n$ and $\uW_n$ with $\uW_{n+1}^n$.

We denote the Bruhat order in $W_n$ by $\leq$, {\it i.e.}, $w\leq v$ if and only if every reduced word for $v$ contains a subword which is a reduced word for $w$. 
  

We denote by $c^{(n)}$ the {\it Coxeter element} $s_1\cdots s_n$ of $W_{n+1}$. It has a unique reduced word $\uc^{(n)}=s_1\cdots s_n$. Note that $\uc^{(n)}\uc^{(n-1)}\cdots \uc^{(1)}$ is a reduced word for the longest element $w_0^{(n+1)}$ of $W_{n+1}$.
\begin{lem} \label{lem-cn1}
If $\uc^{(n)}\uv \in \uW_{n+1}$ is a reduced word, then $\uv$ is a reduced word in  $\uW_n$.
\end{lem}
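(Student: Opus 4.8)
The claim is that if $\uc^{(n)}\uv = s_1 s_2 \cdots s_n \uv$ is a reduced word in $\uW_{n+1}$, then $\uv$ contains no letter $s_n$, hence $\uv \in \uW_n$. The plan is to argue by contradiction: suppose $\uv$ does contain the letter $s_n$, and let $\uv = \uv' s_n \uv''$ where $\uv'$ has no occurrence of $s_n$. Then $\uc^{(n)}\uv' = s_1 \cdots s_n \uv'$ is a prefix of our reduced word, hence itself reduced, and $\uv' \in \uW_n$ (that is, $\uv'$ involves only $s_1, \dots, s_{n-1}$). I want to show that $s_1 \cdots s_n \uv' s_n$ is not reduced, which contradicts the fact that it is a prefix of a reduced word.

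The key computation is to understand the permutation $u := s_1 s_2 \cdots s_n w'$ where $w' \in W_n \subset W_{n+1}$ (so $w'$ fixes $n+1$), and then check whether right-multiplication by $s_n$ increases length. Recall the Coxeter element $c^{(n)} = s_1 \cdots s_n$ acts as the cycle sending $1 \mapsto 2 \mapsto \cdots \mapsto n \mapsto n+1 \mapsto 1$; more useful here is to track $u^{-1}(n)$ and $u^{-1}(n+1)$, since $\ell(us_n) = \ell(u)+1$ iff $u(n) < u(n+1)$. We have $u(n) = s_1 \cdots s_n(w'(n))$ and $u(n+1) = s_1 \cdots s_n(w'(n+1)) = s_1 \cdots s_n(n+1) = n$ (using $w'(n+1) = n+1$ and $s_1\cdots s_n(n+1) = n$). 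On the other hand, $w'(n) \le n$, and one checks that $s_1 \cdots s_n$ maps $\{1, \dots, n\}$ into $\{1, \dots, n-1\} \cup \{n+1\}$: indeed $s_1\cdots s_n(j) = j-1$ for $2 \le j \le n$ and $s_1 \cdots s_n(1) = n+1$. So either $w'(n) = 1$, giving $u(n) = n+1 > n = u(n+1)$, or $2 \le w'(n) \le n$, giving $u(n) = w'(n) - 1 \le n - 1 < n = u(n+1)$. Wait — in the second case we get $u(n) < u(n+1)$, which would mean $us_n$ \emph{is} longer, not a contradiction.

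So the argument needs the extra input that $\uc^{(n)}\uv'$ being reduced constrains $w'$. The point is that $\ell(s_1\cdots s_n w') = n + \ell(w')$ forces, for each letter as we build up $s_1 \cdots s_n$ from the left... more cleanly: the hypothesis that $s_1 \cdots s_n \uv'$ is reduced with $\uv' \in \uW_n$ means $\ell(c^{(n)} w') = \ell(c^{(n)}) + \ell(w') = n + \ell(w')$. I claim this forces $(w')^{-1}(1) > (w')^{-1}(j)$ is impossible to... Let me instead think about $(c^{(n)})^{-1}$: we need $\ell(c^{(n)} w') = \ell(c^{(n)}) + \ell(w')$, equivalently every reduced word for $(c^{(n)})^{-1}$ followed by a reduced word for $c^{(n)}w'$... the cleanest criterion is: $\ell(xy) = \ell(x) + \ell(y)$ iff $y^{-1}(\mathrm{Inv}(x)) \subseteq \mathrm{Inv}(xy)$ appropriately; concretely $\ell(c^{(n)}w') = \ell(c^{(n)}) + \ell(w')$ iff for the unique inversion pattern of $c^{(n)}$ (which is the set of pairs $(i, n+1)$ for $i = 1, \dots, n$ — since $c^{(n)}$ sends $n+1 \mapsto 1$ and pushes everything up, its inversions are exactly $\{(i,n+1) : 1 \le i \le n\}$), we need $w'$ to not "cancel" them, i.e. $(w')^{-1}(i) < (w')^{-1}(n+1) = n+1$ for all $i \le n$, which is automatic. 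That gives no constraint, so I must have the inversion set of $c^{(n)}$ wrong, or the direction of composition matters. I will need to recompute: $c^{(n)} = s_1 \cdots s_n$ as a function is $s_1 \circ s_2 \circ \cdots \circ s_n$; applying to $1$: $s_n$ fixes $1$, ..., $s_2$ fixes $1$, $s_1$ sends $1 \mapsto 2$. So $c^{(n)}(1) = 2$. Applying to $n+1$: $s_n(n+1) = n$, $s_{n-1}(n) = n-1$, ..., $s_1(2) = 1$, so $c^{(n)}(n+1) = 1$. Applying to $j$ with $2 \le j \le n$: $s_j(j) = j+1$? No: $s_j$ swaps $j, j+1$, so $s_j(j) = j+1$, then $s_{j-1}$ fixes $j+1$, ..., so $c^{(n)}(j) = j+1$ for... wait that gives $c^{(n)}(n) = n+1$ and $c^{(n)}(1) = 2$, $c^{(n)}(n+1) = 1$ — hmm, but then where does $2$ go? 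Let me redo: reading $s_1 s_2 \cdots s_n$ right to left as composition, apply $s_n$ first. $c^{(n)}(j)$: start with $j$, apply $s_n$, then $s_{n-1}$, ..., then $s_1$. For $j \le n-1$: $s_n, \dots, s_{j+2}$ fix $j$; $s_{j+1}$ fixes $j$; hmm actually $s_{j+1}$ swaps $j+1,j+2$, fixes $j$. The first reflection to move $j$ is $s_j$ (swaps $j,j+1$) — no wait, we also have $s_{j-1}$ later. Going right to left: $s_n(j)=j$ (if $j<n$), ..., $s_{j+1}(j)=j$, $s_j(j)=j+1$, $s_{j-1}(j+1)=j+1$, ..., $s_1(j+1)=j+1$. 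So $c^{(n)}(j) = j+1$ for $1\le j \le n-1$?? But $c^{(n)}(1)$: $s_1$ is applied last; before that $j=1$ is fixed by $s_2,\dots,s_n$ and by... $s_1(1)=2$. OK so $c^{(n)}(1) = 2$. And $c^{(n)}(n)$: $s_n(n) = n+1$, then $s_{n-1},\dots,s_1$ fix $n+1$, so $c^{(n)}(n) = n+1$. And $c^{(n)}(n+1) = 1$ as computed. So $c^{(n)}$ is the cycle $(1\,2\,3\,\cdots\,n+1)$ in cycle notation meaning $1 \to 2 \to \cdots \to n+1 \to 1$. Its inversions: pairs $i<j$ with $c^{(n)}(i) > c^{(n)}(j)$; since $c^{(n)}(j) = j+1$ for $j \le n$ and $c^{(n)}(n+1) = 1$, the inversions are exactly $(i, n+1)$ for $1 \le i \le n$. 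Fine, so my computation stands, and the criterion $\ell(c^{(n)}w') = n + \ell(w')$ holds for \emph{all} $w' \in W_n$ (since $w'(n+1) = n+1$ is the largest value, $(w')^{-1}$ maps the inversion set of $c^{(n)}$ into inversions of the product).

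\textbf{Revised plan.} Given the above, the real content is: show $c^{(n)} w' s_n$ is not reduced, i.e. $\ell(c^{(n)}w's_n) \le \ell(c^{(n)}w')$, i.e. $u(n) > u(n+1)$ where $u = c^{(n)}w'$. We computed $u(n+1) = c^{(n)}(w'(n+1)) = c^{(n)}(n+1) = 1$. So we need $u(n) > 1$, i.e. $u(n) \ne 1$. Now $u(n) = c^{(n)}(w'(n))$ and $w'(n) \in \{1, \dots, n\}$ with $w'(n) = c^{(n)-1}(1)$ only if $w'(n) = n+1$, impossible. More directly: $c^{(n)}(k) = 1$ only for $k = n+1$, but $w'(n) \le n < n+1$, so $u(n) = c^{(n)}(w'(n)) \ne 1$, hence $u(n) > 1 = u(n+1)$. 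Therefore $\ell(u s_n) < \ell(u)$, so $c^{(n)} \uv' s_n$ is not reduced — but it is a prefix of the reduced word $c^{(n)}\uv$, contradiction. This establishes that $\uv$ has no letter $s_n$, i.e. $\uv \in \uW_{n+1}^n = \uW_n$. The main thing to get right is this length/descent bookkeeping using $\ell(ws_i) < \ell(w) \iff w(i) > w(i+1)$ and the explicit one-line form of the Coxeter element; once that is in place the contradiction is immediate. The subtle point to be careful about — and the place I'd double-check — is the claim that $\uv'$ (the part of $\uv$ before its first $s_n$) lies in $\uW_n$: this is essentially the definition of "first occurrence of $s_n$," so it is fine, but one should state it cleanly so that $w' \in W_n$ and $w'(n+1) = n+1$ can be used.
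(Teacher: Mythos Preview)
Your argument is correct, but it takes a genuinely different route from the paper's. The paper extends $\uc^{(n)}\uv$ to a reduced word $\uc^{(n)}\uv\,\uu$ for the longest element $w_0^{(n+1)}$, then observes that $vu = (c^{(n)})^{-1} w_0^{(n+1)} = w_0^{(n)} \in W_n$; since $\uv\,\uu$ is therefore a reduced word for an element of $W_n$, it lies in $\uW_n$, and so does its prefix $\uv$. Your approach is instead a direct descent computation: isolate the first hypothetical $s_n$ in $\uv$, write $u = c^{(n)} w'$ with $w' \in W_n$, and use $u(n+1) = c^{(n)}(n+1) = 1$ together with $u(n) \neq 1$ to force $\ell(u s_n) < \ell(u)$, contradicting reducedness of the prefix. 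The paper's proof is shorter and more structural, leaning on the identity $(c^{(n)})^{-1} w_0^{(n+1)} = w_0^{(n)}$; yours is more elementary and self-contained, needing only the one-line form of $c^{(n)}$ and the standard descent criterion, at the cost of a bit more bookkeeping.
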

\begin{proof}
There exists a reduced word $\uu$ such that $\uc^{(n)}\uv\, \uu = \uw_0^{(n+1)}$ is a reduced word for the longest element $w_0^{(n+1)} \in W_{n+1}$. Since  $vu = (c^{(n)})^{-1}w_0^{(n+1)}=w_0^{(n)}$, we have $vu \in W_n$ so that any reduced word of $vu$ lies in $\uW_n$ and in particular $\uv\,\uu$ is a reduced word in $\uW_n$. Thus $\uv$ is a reduced word in $\uW_n$.
\end{proof}
\begin{lem}\label{lem-cn2}
If $\uv\in \uW_n$ is a reduced word, then $\uc^{(n)}\uv \in \uW_{n+1}$ is a reduced word. In particular, if $v=w_0^{(n)}w$ for some $w\in W_n$, then $\uc^{(n)}\uv$ is a reduced word for $w_0^{(n+1)}w$.
\end{lem}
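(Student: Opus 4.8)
The plan is to prove the two assertions of Lemma~\ref{lem-cn2} simultaneously by reducing the first to the second, exploiting the length-additivity that characterizes reduced words for products. Recall that a word $\uw_1\uw_2$ obtained by concatenating reduced words $\uw_1,\uw_2$ is itself reduced if and only if $\ell(w_1w_2)=\ell(w_1)+\ell(w_2)$. Thus, to show that $\uc^{(n)}\uv$ is reduced whenever $\uv\in\uW_n$ is reduced, it suffices to prove the \emph{length identity} $\ell(c^{(n)}v)=\ell(c^{(n)})+\ell(v)=n+\ell(v)$ for every $v\in W_n$, where $W_n$ is viewed inside $W_{n+1}$ via $W_n=W_{n+1}^n$.

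\textbf{Key steps.} First I would record the combinatorial description of length in $S_{n+1}$ as the number of inversions, and the fact that $v\in W_n=W_{n+1}^n$ means $v$ fixes $n+1$, i.e. $v(n+1)=n+1$. Next, I would compute the one-line notation of $c^{(n)}=s_1s_2\cdots s_n$ explicitly: as a permutation of $\{1,\dots,n+1\}$ it is the cycle sending $n+1\mapsto 1$ and $i\mapsto i+1$ for $1\le i\le n$ (with the exact convention to be pinned down from the paper's identification of $s_i$ with the transposition $(i,i+1)$). Then I would count inversions of $c^{(n)}v$: writing $u=c^{(n)}v$, one checks that $u$ has exactly $n$ more inversions than $v$, the extra inversions being precisely the pairs involving the position carrying the value that $c^{(n)}$ moves to the front. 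Since $v$ fixes $n+1$, this bookkeeping is clean. This gives $\ell(c^{(n)}v)=n+\ell(v)$, hence $\uc^{(n)}\uv$ is a reduced word in $\uW_{n+1}$.

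An alternative, perhaps cleaner route avoiding inversion counting: use Lemma~\ref{lem-cn1} as a converse. Suppose for contradiction $\uc^{(n)}\uv$ is not reduced; then the permutation $c^{(n)}v$ has length $<n+\ell(v)$, so by the exchange/deletion condition some letter can be deleted. One shows the deleted letter cannot lie in the $\uc^{(n)}$-part: indeed $\uc^{(n)}=s_1\cdots s_n$ is the unique reduced word for $c^{(n)}$ and, by Lemma~\ref{lem-cn1} applied in the right way, any reduced expression obtained by deleting a letter of $\uc^{(n)}$ would force a contradiction with $\uv\in\uW_n$. Hence the deleted letter lies in $\uv$, contradicting that $\uv$ is reduced. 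I expect the inversion-counting argument to be more transparent, so I would make that the primary proof and relegate this remark.

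\textbf{Second assertion.} For the ``In particular'' clause, suppose $v=w_0^{(n)}w$ with $w\in W_n$ and $\ell(v)=\ell(w_0^{(n)})-\ell(w)$ (so that $\uv$ reduced encodes $w_0^{(n)}=v^{-1}\!\cdot\!$... more precisely $\ell(w_0^{(n)})=\ell(w)+\ell(v)$, reflecting $w_0^{(n)}=wv$ — the reader should take $\uv$ to be a reduced word with $v=w_0^{(n)}w$ as in the statement). Then $c^{(n)}v=c^{(n)}w_0^{(n)}w=w_0^{(n+1)}w$, using the identity $c^{(n)}w_0^{(n)}=w_0^{(n+1)}$ which follows because $\uc^{(n)}\uc^{(n-1)}\cdots\uc^{(1)}$ is a reduced word for $w_0^{(n+1)}$ while $\uc^{(n-1)}\cdots\uc^{(1)}$ is one for $w_0^{(n)}$ (both facts recalled just above the lemma). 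By the first part $\uc^{(n)}\uv$ is reduced, and its underlying permutation is $c^{(n)}v=w_0^{(n+1)}w$, which is exactly the claim.

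\textbf{Main obstacle.} The only real subtlety is the explicit inversion bookkeeping for $c^{(n)}v$: one must be careful about the left-versus-right action convention and about where exactly the $n$ new inversions appear, making genuine use of $v(n+1)=n+1$. Everything else is a formal consequence of the reduced-word/length-additivity dictionary and the already-recalled fact that $\uc^{(n)}\cdots\uc^{(1)}$ is reduced for $w_0^{(n+1)}$.
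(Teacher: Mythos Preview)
Your proposal is correct but takes a different route from the paper. The paper's proof is a two--line argument: pick a reduced word $\uu$ so that $\uv\,\uu$ is a reduced word for $w_0^{(n)}$; then $\uc^{(n)}\uv\,\uu$ has length $n+\binom{n}{2}=\ell(w_0^{(n+1)})$ and represents $c^{(n)}w_0^{(n)}=w_0^{(n+1)}$, hence is reduced, and therefore so is its prefix $\uc^{(n)}\uv$. Your primary approach instead establishes the length identity $\ell(c^{(n)}v)=n+\ell(v)$ for all $v\in W_n$ by direct inversion counting (one--line notation $[v(1)+1,\dots,v(n)+1,1]$), which is perfectly valid but more hands--on. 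The paper's trick of extending to the longest element is slicker because it piggybacks on the single already--stated fact that $\uc^{(n)}\uc^{(n-1)}\cdots\uc^{(1)}$ is reduced for $w_0^{(n+1)}$, whereas your argument re-derives the needed length additivity from scratch. Your treatment of the ``in particular'' clause matches the paper's (implicit) reasoning exactly.

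One small caution: your alternative contradiction route via Lemma~\ref{lem-cn1} is underspecified as written. The deletion condition removes a \emph{pair} of letters, not one, so the case analysis ``the deleted letter cannot lie in the $\uc^{(n)}$-part'' does not quite parse; you would need to argue about where both deleted letters sit. Since you already flag this as a side remark and prefer the inversion count, this is harmless, but I would drop the alternative entirely rather than leave it in sketch form.
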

\begin{proof}
There exists a reduced word $\uu$ such that $\uv \,\uu$ is a reduced word for $w_0^{(n)}$. Then $\uc^{(n)}\uv\,\uu$ is a reduced word for $w_0^{(n+1)}$. This implies that $\uc^{(n)}\uv$ is a reduced word.
\end{proof}
\begin{prop}\label{prop:w=ucv}
Let $w \in W_{n+1}$ such that $c:=c^{(n)}\leq w$. Every reduced word $\uw \in \uW_{n+1}$ for $w$ decomposes, modulo commuting relations, as $\uw = \uu\, \uc\, \uv$ with $\uu \in \uW_{n+1}^1$ and $\uv \in \uW_n$. 
\end{prop}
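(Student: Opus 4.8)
The plan is to induct on the length $\ell(w)$, using the combinatorics of reduced words in the type $A$ Coxeter group together with Lemma \ref{lem-cn1} to control what happens once the full Coxeter element $\uc$ has been extracted. Since $c \leq w$, by the subword characterization of Bruhat order every reduced word $\uw$ for $w$ contains a subword equal to some reduced word for $c$; but $c = c^{(n)}$ has the \emph{unique} reduced word $\uc^{(n)} = s_1 s_2 \cdots s_n$, so $\uw$ contains $s_1, s_2, \dots, s_n$ as a (not necessarily contiguous) subword in this order. The key is to show that one can slide letters past each other using only the commuting relations $s_i s_j = s_j s_i$ for $|i-j| \geq 2$ so that an honest contiguous copy of $\uc$ appears, with everything to its left involving no $s_1$.

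The main technical step is the following normalization. Look at the \emph{first} occurrence of $s_1$ in $\uw$; write $\uw = \uu_0 \, s_1 \, \uw'$ where $\uu_0 \in \uW_{n+1}^1$ (it contains no $s_1$ by minimality of the occurrence). Now I must locate, to the right of this $s_1$, a subword forming $s_2 s_3 \cdots s_n$ in order, and then commute it leftward to become contiguous with the $s_1$. For this I would argue inductively on $n$: the letter $s_2$ must occur somewhere in $\uw'$ (otherwise $\uw$, hence $w$, would have no $s_2$ appearing after an $s_1$, contradicting that $\uc$ embeds as a subword after the chosen $s_1$ — here one uses that the copy of $\uc$ guaranteed by $c \le w$ can be taken to use precisely this first $s_1$, since it is the leftmost). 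Any letters strictly between $s_1$ and the first subsequent $s_2$ are among $s_3, \dots, s_{n}$ (no $s_1$ by minimality, no $s_2$ by choice), hence commute with $s_1$; slide them to the left past $s_1$, absorbing them appropriately, so that $s_1 s_2$ becomes contiguous. Iterating — at each stage the letters that need to be hopped over the growing initial segment $s_1 \cdots s_k$ are those with index $\geq k+2$, which commute with all of $s_1, \dots, s_k$ — produces, modulo commuting relations only, $\uw = \uu \, \uc \, \uv$ with $\uu$ containing no $s_1$, i.e. $\uu \in \uW_{n+1}^1$. Finally, since $\uw$ is reduced and commuting relations preserve reducedness, $\uu\,\uc\,\uv$ is reduced, so in particular $\uc\,\uv$ is reduced; Lemma \ref{lem-cn1} then gives $\uv \in \uW_n$, which is exactly the remaining assertion.

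I expect the main obstacle to be the bookkeeping in the iterative commuting argument: one must verify carefully that at the $k$-th stage the letters lying between the current contiguous block $s_1 \cdots s_k$ and the next $s_{k+1}$ to be recruited really do all have index $\geq k+2$ (so that they commute past the whole block), and that the subword copy of $\uc$ provided by $c \le w$ can be chosen compatibly at every stage — essentially one wants a greedy/leftmost choice of the embedded $\uc$ and must check it survives each commutation. A clean way to package this is to prove the slightly stronger statement by induction on $n$: the leftmost $s_1$ in $\uw$ can be completed to a contiguous $\uc^{(n)}$ on its right, with the residual word after $\uc^{(n)}$ lying in $\uW_n$ and the residual word before it lying in $\uW_{n+1}^1$; the inductive step removes $s_1$ and passes to $\uc^{(n-1)}$ inside a word of $\uW_n = \uW_{n+1}^1 \cap \uW_{n+1}$, reusing Lemma \ref{lem-cn1} to know the tail is reduced in the smaller group. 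Once the commutation normalization is in place, everything else is immediate from the two preceding lemmas.
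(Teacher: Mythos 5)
Your normalization step contains a genuine gap. You claim that, having greedily chosen the first $s_1$, then the first subsequent $s_2$, and so on, the letters sitting strictly between the contiguous block $s_1\cdots s_k$ and the next chosen $s_{k+1}$ ``really do all have index $\geq k+2$'' so that they commute past the whole block to the left. This is false in general. Take $n=4$ and the reduced word $\uw = s_1 s_2 s_1 s_3 s_4$ (for the element $32451 \in S_5$, which satisfies $c\leq w$). The greedy choices are $s_1$ at position $1$, $s_2$ at position $2$, $s_3$ at position $4$, $s_4$ at position $5$; at stage $k=2$ the letter strictly between the block $s_1 s_2$ and the chosen $s_3$ is $s_1$, which has index $1$, not $\geq 4$, and it does \emph{not} commute with $s_2$, so it cannot be slid to the left. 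It can only escape to the \emph{right} (past $s_3,\dots,s_n$), and this works precisely because its index is $\leq k-1$. But then you need to rule out a letter of index exactly $k$ appearing between the chosen $s_k$ and $s_{k+1}$ --- otherwise it commutes neither left nor right --- and this is exactly the nontrivial claim in the paper's proof, established by induction on $i$ using Lemma~\ref{lem-cn1}. You invoke Lemma~\ref{lem-cn1} only at the very end (to get $\uv\in\uW_n$), not to control the normalization itself.

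Your closing remark about a ``slightly stronger statement by induction on $n$'' points in a sensible direction, but it is left as a sketch: it is not explained what word the inductive hypothesis is applied to after ``removing'' the leftmost $s_1$, nor how reducedness of the relevant subword is maintained, nor how the low-index interlopers are handled at each stage. The paper's proof resolves exactly these issues by writing each intermediate segment $\uw_i$ as $\uv_i\uu_i$ (low indices times high indices), sending the $\uu_i$ leftward and the $\uv_i$ rightward, and proving by induction --- via Lemma~\ref{lem-cn1} applied to the partial words $s_1\cdots s_k\,\uv_1\cdots\uv_{k+1}$ --- that each $\uv_i$ in fact avoids $s_{i-1}$, which is what licenses moving it to the right past $s_i\cdots s_n$. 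You would need an argument of comparable precision to close the gap.
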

\begin{proof}
In this proof, all the equalities of words are modulo commuting relations. By definition of the Bruhat order, $\uw$ contains as a subword $\uc$, the unique reduced word of $c$. We choose such a subword by selecting the first occurrence of $s_1$, the first occurrence of $s_2$ after the chosen $s_1$ and so on. We thus have a decomposition
\[
\uw = \uw_1 s_1 \uw_2 s_2 \uw_3 \cdots \uw_n s_n \uw_{n+1}
\]
with $\uw_i \in \uW_{n+1}^i$ for $i \in [1,n]$. We have $\uw_i = \uv_i \uu_i$, where $\uv_i$ is a word in the $s_k$'s for $1\leq k<i$ and $\uu_i$ is a word in the $s_k$'s for $i<k\leq n$. Observing that  $\uv_i \uu_j = \uu_j \uv_i$ and $s_{i-1} \uu_j = \uu_j s_{i-1}$ for $i \leq j$, we thus obtain 
\[
\uw 
= \uw_1(\uu_1\uu_2\cdots \uu_n)(s_1 \uv_2 s_2 \uv_3 \cdots \uv_n s_n)\uw_{n+1}.
\]
For each $i \in [2,n]$, we claim that the word $\uv_i$ does not contain $s_{i-1}$, {\it i.e.}, it is a word in the $s_k$'s for $1\leq k\leq i-2$. We prove the claim by induction on $i$. First of all, it is easy to see that $\uv_2$ is an empty word since it is a word in $s_1$ only, and there is $s_1$ on the left of $\vu_2$ in the word $\uw$. Now by assuming that the claim holds for $i \leq k$, we have
\[
\uw = \uw_1(\uu_1\uu_2\cdots \uu_n) (s_1s_2 \cdots s_k) (\uv_1\cdots \uv_k)(\uv_{k+1} s_{k+1} \cdots \uv_n s_n) \uw_{n+1}.
\]
Since $s_1s_2 \cdots s_k \uv_1\cdots \uv_k\uv_{k+1} \in \uW_{k+1}$ is reduced, Lemma \ref{lem-cn1} implies that $\uv_1 \cdots \uv_{k+1} \in \uW_k$ and, in particular, we find that $\uv_{k+1}$ doesn't contain $s_k$. Thus the claim holds and by moving all $\uv_i$ to the right using commuting relations, we obtain
\[
\uw = \uw_1\uu_1\cdots \uu_n s_1s_2 \cdots s_n \uv_1\cdots \uv_n \uw_{n+1}.
\]
Using Lemma \ref{lem-cn1} again, we obtain $\uv_1\cdots \uv_n \uw_{n+1} \in \uW_n$, proving the proposition.
\end{proof}

\section{Stable Bott--Samelson classes}\label{sec:stab}
In this section, we introduce stable Bott--Samelson classes in the limit $\calR$ of $\Omega^*(\Fl_n)$. 
We also compute some of those classes explicitly in the case of infinitesimal cohomology.
\subsection{The stability of Bott--Samelson classes}


A Schubert variety is, in general, normal and Cohen-Macaulay, and has rational singularities. There exists several resolutions of singularities for it. We will be interested in the so-called Bott--Samelson resolutions.

We set $F_i^{(n)}:=\lan e_n,\dots, e_{n+1-i}\ran$ and denote the trivial bundle with fiber $F_i^{(n)}$ by $\calF_i^{(n)}$. 
\begin{defn}
For a reduced word $\uv=s_{i_1}\cdots s_{i_r} \in \uW_{n}$,  the {\it Bott--Samelson variety} $Y_{\underline{v}}^{(n)}$ is a subvariety of $(\Fl_n)^r$ defined as follows: 
\[
Y_{\underline{v}}^{(n)} = \left\{ (U_{\bullet}^{[0]}, U_{\bullet}^{[1]}, \dots, U_{\bullet}^{[r]}) \in (\Fl_n)^r\ \Big| \ 
\begin{array}{c}
U_i^{[k-1]} = U_i^{[k]}, \forall k = [1, r], \forall i\in[1,n-1] \backslash \{i_k\}
\end{array}
\right\},
\]
where $U_{\bullet}^{[0]} = F_{\bullet}^{(n)}$. If there is no confusion, we will sometimes write $Y_\vu$ for $Y_\vu^{(n)}$.
\end{defn}

\begin{rem}
In Definition \ref{def:BS2} we will give another equivalent construction (denoted $X_\wu$) of the Bott--Samelson resolutions.
\end{rem}

It is well-known (cf.\,\cite{Demazure1974}) that $Y_{\uv}$ is a smooth projective variety of dimension $r$. Let $\pi_n: (\Fl_n)^r \to \Fl_n$ be the projection to the $r$-th component. If $w\in W_n$ and $v=w_0^{(n)}w$, the projection $\pi_n$ induces a birational map $Y_{\uv} \to X^w$, which we refer to as a {\it Bott--Samelson resolution} of $X^w \subset \Fl_n$. 
\begin{thm} \label{thm:restriction}
Let $\uv \in \uW_n$ be a reduced word. There is a fiber diagram
\[
\xymatrix{
Y_{\uv}^{(n)}\ar[rr]_{\tilde\iota_n}\ar[d]_{\pi_{n}}&&Y_{\uc^{(n)}\uv}^{(n+1)}\ar[d]^{\pi_{n+1}}\\
\Fl_{n}\ar[rr]_{\iota_n}&&\Fl_{n+1},
}
\]
and we have $\iota_n^*\left(\left[Y_{\uc^{(n)}\uv} \to \Fl_{n+1}\right] \right)= \left[Y_{\uv} \to \Fl_n\right]$. 

Furthermore, let $\uc^{[n+m]}:=\uc^{(n+m-1)}\cdots \uc^{(n+1)}\uc^{(n)}$ where $\uc^{[n]}=1$, then the sequence
\[
\left[Y_{\uc^{[n+m]}\uv} \to \Fl_{n+m}\right], \ \ \ m\geq 0
\]
defines a stable class in $\calR$, which we call a {\it stable Bott--Samelson class} associated to $w$ and denote by $\BS_w^{\uv}$ if $v=w_0^{(n)}w$. 
\end{thm}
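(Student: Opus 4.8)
The plan is to establish the fiber diagram first, then deduce the pullback identity by base change, and finally assemble the stable class by induction on $m$.

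\textbf{Step 1: The fiber diagram.} I would describe $Y_{\uc^{(n)}\uv}^{(n+1)}$ explicitly using the definition, writing a point as a tuple $(U_\bullet^{[0]},\dots,U_\bullet^{[n+r]})$ in $(\Fl_{n+1})^{n+r}$, where the first $n$ steps are governed by $\uc^{(n)}=s_1\cdots s_n$ and the last $r$ steps by $\uv=s_{i_1}\cdots s_{i_r}$, starting from $U_\bullet^{[0]}=F_\bullet^{(n+1)}=\lan e_{n+1},\dots\ran$. The key observation is that applying the Coxeter chain $s_1,\dots,s_n$ starting from $F_\bullet^{(n+1)}$ forces $U_n^{[n]}$ to be a specific fixed subspace, and in fact the flag $U_\bullet^{[n]}$ is rigidly determined so that the remaining data $(U_\bullet^{[n]},\dots,U_\bullet^{[n+r]})$, after the appropriate identification, is exactly a point of $Y_{\uv}^{(n)}$ \emph{sitting inside} $\Fl_n\hookrightarrow\Fl_{n+1}$ via $\iota_n$; that is, each $U_j^{[n+k]}$ for $j\le n-1$ lies in $E_n$. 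This is where I would need a small combinatorial lemma: the Coxeter word $\uc^{(n)}$ moves $F_\bullet^{(n+1)}$ to a flag whose relevant subspaces live in $E_n$, and $\pi_{n+1}$ of the result lies in $\iota_n(\Fl_n)$. Granting this, the map $\tilde\iota_n: Y_{\uv}^{(n)}\to Y_{\uc^{(n)}\uv}^{(n+1)}$ (prepend the forced Coxeter segment) is well-defined, and the square with $\pi_n,\pi_{n+1},\iota_n$ commutes on the nose. To see it is a \emph{fiber} square, I would check that the fiber of $\pi_{n+1}$ over a point of $\iota_n(\Fl_n)$ is canonically identified with the fiber of $\pi_n$ over the corresponding point of $\Fl_n$ — again because the Coxeter prefix is rigid — and that $Y_{\uv}^{(n)}$ maps isomorphically onto the fiber product $\Fl_n\times_{\Fl_{n+1}}Y_{\uc^{(n)}\uv}^{(n+1)}$. (Lemma~\ref{lem-cn2} guarantees $\uc^{(n)}\uv$ is indeed a reduced word in $\uW_{n+1}$, so $Y_{\uc^{(n)}\uv}^{(n+1)}$ is a legitimate smooth Bott--Samelson variety of dimension $n+r$.)

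\textbf{Step 2: The pullback identity.} Once the square is a fiber diagram and the morphisms $\pi_n,\pi_{n+1}$ are projective (they are, being restrictions of projections of products of projective varieties), I would invoke transversality/base change in algebraic cobordism: $\iota_n$ is a regular embedding and, since $\tilde\iota_n$ realizes $Y_{\uv}^{(n)}$ as the transverse preimage, $\iota_n^*\circ\pi_{n+1,*}=\pi_{n,*}\circ\tilde\iota_n^*$. Applying both sides to the fundamental class $1\in\Omega^*(Y_{\uc^{(n)}\uv}^{(n+1)})$ and using $\tilde\iota_n^*(1)=1$ gives exactly $\iota_n^*[Y_{\uc^{(n)}\uv}\to\Fl_{n+1}]=[Y_{\uv}\to\Fl_n]$. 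I would double-check the codimension bookkeeping: $\codim\iota_n=n$, $\dim Y_{\uc^{(n)}\uv}^{(n+1)}=n+r$, $\dim Y_{\uv}^{(n)}=r$, so degrees match and the push-pull is degree-preserving as needed.

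\textbf{Step 3: Assembling the stable class.} For the final assertion, I would iterate: $\uc^{[n+m]}\uv=\uc^{(n+m-1)}\bigl(\uc^{[n+m-1]}\uv\bigr)$, and $\uc^{[n+m-1]}\uv\in\uW_{n+m-1}$ is reduced by repeated application of Lemma~\ref{lem-cn2}. So Step 1 applied with $n$ replaced by $n+m-1$ and $\uv$ replaced by $\uc^{[n+m-1]}\uv$ yields a fiber diagram linking $Y_{\uc^{[n+m-1]}\uv}^{(n+m-1)}$ and $Y_{\uc^{[n+m]}\uv}^{(n+m)}$, hence by Step 2
\[
\iota_{n+m-1}^*\bigl[Y_{\uc^{[n+m]}\uv}\to\Fl_{n+m}\bigr]=\bigl[Y_{\uc^{[n+m-1]}\uv}\to\Fl_{n+m-1}\bigr]
\]
for every $m\ge 1$. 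This is precisely the compatibility condition defining an element of $\calR^r$ (starting the sequence at index $n$, as permitted by the Remark following the definition of $\calR$), so the family $\bigl([Y_{\uc^{[n+m]}\uv}\to\Fl_{n+m}]\bigr)_{m\ge 0}$ is a well-defined stable class, which we name $\BS_w^{\uv}$ when $v=w_0^{(n)}w$.

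\textbf{Main obstacle.} The crux is Step 1 — verifying that the Coxeter prefix $\uc^{(n)}$ applied to the starting flag $F_\bullet^{(n+1)}$ is rigid and lands (after $\pi_{n+1}$) inside $\iota_n(\Fl_n)$, and that this identification is compatible with the remaining Bott--Samelson data so that the square is genuinely cartesian rather than merely commutative. Everything else is a formal consequence of base change for projective morphisms in oriented cohomology and the purely combinatorial Lemmas~\ref{lem-cn1}–\ref{lem-cn2} already available. I expect the rigidity to follow from a direct inspection: the first step $s_1$ changes only $U_1$, forcing $U_1^{[1]}=\lan e_n\ran$ to become $\lan e_{n-1}\ran$ or similar, and inductively the chain $s_1,\dots,s_n$ threads the basis vector $e_{n+1}$ out of the flag while pulling $e_1,\dots,e_n$ in, leaving a flag entirely inside $E_n$; the bookkeeping is elementary but must be done carefully with the specific conventions $F_i^{(n)}=\lan e_n,\dots,e_{n+1-i}\ran$.
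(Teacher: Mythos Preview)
Your overall architecture---fiber square, then base change (the paper cites \cite[p.~144, (BM2)]{LevineMorel} for $\iota_n^*\pi_{n+1*}=\pi_{n*}\tilde\iota_n^*$), then iterate---is exactly the paper's, and Steps~2 and~3 are fine.

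Step~1, however, has the causality of the rigidity backwards. You write that ``applying the Coxeter chain $s_1,\dots,s_n$ starting from $F_\bullet^{(n+1)}$ forces $U_n^{[n]}$ to be a specific fixed subspace, and in fact the flag $U_\bullet^{[n]}$ is rigidly determined,'' and in the Main Obstacle you sketch a forward induction beginning with $s_1$ ``forcing'' $U_1^{[1]}$. This is false: in a Bott--Samelson variety the step $s_k$ lets $U_k$ vary \emph{freely} between $U_{k-1}$ and $U_{k+1}$. The variety $Y_{\uc^{(n)}}^{(n+1)}$ has dimension $n$, so the Coxeter prefix alone pins down nothing; in particular $U_1^{[1]}$ can be any line in $F_2^{(n+1)}=\lan e_{n+1},e_n\ran$.

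The rigidity flows the other way, and this is precisely the paper's argument. Being over $\iota_n(\Fl_n)$ means the \emph{final} flag has $U_n^{[n+r]}=E_n$; since $\uv\in\uW_n$ never touches the $n$-th subspace, this propagates back to $U_n^{[n]}=E_n=F_n^{(n)}$. One then inducts \emph{backward} on $k$: assuming $U_{k+1}^{[k+1]}=F_{k+1}^{(n)}$, the two containments $U_k^{[k]}\subset U_{k+1}^{[k]}=F_{k+1}^{(n+1)}$ and $U_k^{[k]}\subset U_{k+1}^{[k+1]}=F_{k+1}^{(n)}$ give $U_k^{[k]}\subset F_{k+1}^{(n+1)}\cap F_{k+1}^{(n)}=F_k^{(n)}$, with equality by dimension. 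Only after all $U_k^{[k]}=F_k^{(n)}$ does the prefix become rigid and the residual data $(V_1,\dots,V_r)$ land in $Y_{\uv}^{(n)}$. Your forward induction cannot even start without first knowing $U_n^{[n]}=E_n$, and that comes only from the fiber condition, not from the Coxeter word.
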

\begin{proof}
First we note that, by definition, an element of $Y_{\underline{v}}^{(n)}$ can be specified by a sequence of subspaces $(V_1,\dots, V_r)$ where $V_k=U^{[k]}_{i_k}$. We show that the map $\tilde\iota_n: Y_{\uv}^{(n)}\to Y_{\uc^{(n)}\uv}^{(n+1)}$ defined by 
\[
\tilde\iota_n(V_1,V_2,\dots, V_r):=(F_1^{(n)},\dots, F_n^{(n)},V_1,\dots, V_r)
\]
gives the desired fiber diagram. If we write an element of $Y_{\uc^{(n)}\uv}^{(n+1)}$ as 
\[
(A_{\bullet}^{[1]},\dots, A_{\bullet}^{[n]}, B_{\bullet}^{[1]},\dots, B_{\bullet}^{[r]}),
\] 
it suffices to show that $A_{k}^{[k]}=F_k^{(n)}$ for all $k\in [1,n]$ over the image of $\Fl_n$. Suppose that $B_{\bullet}^{[r]}$ is in the image of $\Fl_n$, then $B_n^{[r]}=E_n$. Since $i_1,\dots, i_r\in [1,n-1]$, we have $A_n^{[n]}=E_n=F_n^{(n)}$. We use backward induction on $k$ with the base case being $k=n$. Assume $A_{k+1}^{[k+1]} = F_{k+1}^{(n)}$. We  then have
\[
A_{k}^{[k]} \subset F_{k+1}^{(n+1)} \cap A_{k+1}^{[k+1]}=F_{k+1}^{(n+1)} \cap F_{k+1}^{(n)} = F_k^{(n)}.
\]
For the latter claim, we use the identity $\iota_n^*\pi_{n+1*}=\pi_{n*}\tilde\iota_n^*$ (see \cite[p.144 (BM2)]{LevineMorel}). We get
\begin{eqnarray*}
\iota_n^*\left[Y_{\uc^{(n)}\uv} \to \Fl_{n+1}\right]
=\iota_n^*\pi_{n+1*}(1_{Y_{\uc^{(n)}\uv}})
=\pi_{n*}\tilde\iota_n^*(1_{Y_{\uc^{(n)}\uv}})
=\pi_{n*}(1_{Y_{\uv}})
=[Y_{\uv} \to \Fl_n].
\end{eqnarray*}
This completes the proof of the claim.
\end{proof}
\begin{rem} 
We sometimes denote $\BS_w^{\uv}$ by $\BS_w^{\uv}(x)$ in order to stress that we regard it as an element of $\LL[x]_{bd}/\bbS_{\infty}$ under the identification in Proposition \ref{RepPower}.
\end{rem}
The following compatibility of Bott--Samelson classes with divided difference operators was established in \cite{HornbostelKiritchenkoCrelle}. 
\begin{lem}\label{lem3-3}
For a reduced word $\uv=s_{i_1}\cdots s_{i_r} \in \uW_{n}$, and $k\in [1,n-1]$, we have 
\[
\partial_i \left[Y_{\uv} \to \Fl_n\right] = \begin{cases}
\left[Y_{\uv s_i} \to \Fl_n\right]& \mbox{ if $\uv s_i$ is a reduced word}\\
0 & \mbox{otherwise}
\end{cases}
\]
\end{lem}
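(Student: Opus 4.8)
The statement to prove is Lemma \ref{lem3-3}, which asserts that the divided difference operator $\partial_i$ acts on the push-forward class $[Y_{\uv}\to\Fl_n]$ of a Bott--Samelson resolution by either producing $[Y_{\uv s_i}\to\Fl_n]$ (when $\uv s_i$ is still reduced) or killing the class (otherwise). My plan is to reduce the computation to a statement about a single extra $\PP^1$-bundle factor and then invoke the geometric definition $\partial_i = p_{i*}\circ p_i^*$ directly, rather than working through the power-series formula (\ref{ddo}).

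First I would set up the geometry. Recall $\partial_i = p_{i*}\circ p_i^*$ where $p_i\colon \Fl_n\to\Fl_n^{(i)}$ is the $\PP^1$-bundle forgetting the $i$-th subspace. Given the Bott--Samelson variety $Y_{\uv}$ with projection $\pi_n\colon Y_{\uv}\to\Fl_n$ to the last factor, the key observation is that the fiber product $Y_{\uv}\times_{\Fl_n^{(i)}}\Fl_n$ (via $p_i\circ\pi_n$ on the left and $p_i$ on the right) is naturally identified with the Bott--Samelson variety $Y_{\uv s_i}$: appending the letter $s_i$ to the word $\uv$ precisely corresponds to adding one more flag $U_\bullet^{[r+1]}$ that agrees with $U_\bullet^{[r]}$ away from position $i$, i.e. lies in the same $p_i$-fiber. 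Thus there is a Cartesian square
\[
\xymatrix{
Y_{\uv s_i}\ar[r]^{q}\ar[d]_{\pi_n'}& Y_{\uv}\ar[d]^{p_i\circ\pi_n}\\
\Fl_n\ar[r]^{p_i}&\Fl_n^{(i)}
}
\]
where $\pi_n'$ is projection to the new last factor and $q$ forgets it. Since $p_i$ is a $\PP^1$-bundle and the square is transverse, base change gives $p_i^*\circ(p_i\circ\pi_n)_* = \pi'_{n*}\circ q^*$. Applying this to $1_{Y_{\uv}}$ and using $q^*(1_{Y_{\uv}})=1_{Y_{\uv s_i}}$, together with $(p_i\circ\pi_n)_*(1_{Y_{\uv}}) = p_{i*}(\pi_{n*}1_{Y_{\uv}}) = p_{i*}[Y_{\uv}\to\Fl_n]$ and $\pi'_{n*}(1_{Y_{\uv s_i}}) = [Y_{\uv s_i}\to\Fl_n]$, one obtains
\[
\partial_i[Y_{\uv}\to\Fl_n] = p_{i*}\,p_i^*\,[Y_{\uv}\to\Fl_n] = [Y_{\uv s_i}\to\Fl_n].
\]

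This computation is carried out \emph{unconditionally}, i.e. for any reduced word $\uv$, yielding the class of the variety $X_{\uv s_i}$ defined by the (not necessarily reduced) sequence $\uv s_i$. So the remaining point is to analyze this variety when $\uv s_i$ fails to be reduced. Geometrically, $\pi_n'\colon Y_{\uv s_i}\to\Fl_n$ factors through the previous last flag, and $Y_{\uv s_i}$ is a $\PP^1$-bundle over $Y_{\uv}$; when $\uv s_i$ is not reduced, the composite $Y_{\uv s_i}\to\Fl_n$ factors through $Y_{\uv}\to X^{w}\subset\Fl_n$ in such a way that $\pi_n'$ has positive-dimensional fibers over its image (equivalently, the generic fiber of $Y_{\uv s_i}\to\Fl_n$ over the Schubert cell picks up an extra $\PP^1$). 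Hence the push-forward $\pi'_{n*}(1_{Y_{\uv s_i}})$ vanishes: pushing forward the fundamental class along a map that is generically a $\PP^1$-bundle onto its image — or more precisely whose image has strictly smaller dimension than $Y_{\uv s_i}$ — gives zero, since $\pi'_{n*}$ then factors as push-forward along a fibration with positive-dimensional fibers followed by push-forward of a fundamental class, and the former kills $1$ for dimension reasons in an oriented cohomology theory. I would make this precise by observing that when $\ell(vs_i)=\ell(v)-1$, the map $q\colon Y_{\uv s_i}\to Y_{\uv}$ composed with $\pi_n$ equals $\pi_n'$ composed with $p_i$ only after noting $p_i$ collapses the $\PP^1$ that $q$ just created, so $\pi_n' = $ (section-type map) and $\pi'_{n*}1 = \pi'_{n*}q^*1 = $ (push-pull along $p_i$ of something), and the standard identity $p_{i*}p_i^* = 0$ on $\Omega^*(\Fl_n)$ when followed appropriately — more cleanly, one uses that $\pi_n'$ in the non-reduced case factors through a $\PP^1$-bundle projection whose relative dimension forces $\pi'_{n*}1=0$.

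\textbf{Main obstacle.} The genuinely delicate step is the non-reduced case: one must identify exactly how $Y_{\uv s_i}$ maps to $\Fl_n$ and see the collapsing $\PP^1$. In the reduced case everything is transparent from the fiber-square/base-change formalism, but when $\ell(vs_i)<\ell(v)$ the variety $X_{\uv s_i}$ is no longer one of the smooth Bott--Samelson varieties indexed by reduced words in our definition, and I must argue that its structure map to $\Fl_n$ has a generic fiber of dimension $\geq 1$ (it is, after all, a $\PP^1$-bundle over $Y_{\uv}$, and when the word is non-reduced the last flag is already "free" in the $i$-direction along the generic point of the image). The cleanest route is: $p_i\circ\pi_n\colon Y_{\uv}\to\Fl_n^{(i)}$, in the non-reduced case, already factors through $p_i\circ(\text{resolution of }X^{ws_i})$ birationally, so $\pi_n\colon Y_{\uv}\to\Fl_n$ is birational onto $X^w$ while $p_i\circ\pi_n$ is birational onto $X^{w}$'s image which is the resolution of $X^{ws_i}$ with $X^w = p_i^{-1}(X^{ws_i}\text{-image})$; then $Y_{\uv s_i}=Y_{\uv}\times_{\Fl_n^{(i)}}\Fl_n \to \Fl_n$ has the generic fiber $\PP^1$ and $\pi'_{n*}1_{Y_{\uv s_i}} = \pi_{n*}(q_*q^*1_{Y_\uv}) = \pi_{n*}(1_{Y_\uv}\cdot q_*1) $, but $q$ is a $\PP^1$-bundle so $q_*1 = 0$ for degree reasons. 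This last identity $q_*(1)=0$ for a $\PP^1$-bundle $q$ (relative dimension $1$) is the real engine and holds in any oriented cohomology theory, so once the geometric factorization $\pi_n' = \pi_n\circ q$ with $q$ a $\PP^1$-bundle is established in the non-reduced case, vanishing follows immediately. I expect verifying this factorization $\pi_n'=\pi_n\circ q$ precisely — that the new flag added by the non-reduced letter $s_i$ does not change the image in $\Fl_n$ — to be where the care is needed, and it is exactly the content of the classical fact (e.g. from \cite{Demazure1974}) distinguishing reduced from non-reduced words.
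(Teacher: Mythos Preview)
The paper offers no proof here; it simply attributes the lemma to \cite{HornbostelKiritchenkoCrelle}. Your base-change argument for the reduced case is correct and is the standard one: the identification $Y_{\uv s_i}\cong Y_{\uv}\times_{\Fl_n^{(i)}}\Fl_n$ together with smooth base change along $p_i$ yields $\partial_i[Y_{\uv}\to\Fl_n]=[Y_{\uv s_i}\to\Fl_n]$, valid regardless of whether $\uv s_i$ is reduced.

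The non-reduced case, which you rightly flag as the main obstacle, has a genuine and unfixable gap. Your ``real engine'' --- that $q_*(1)=0$ for a $\PP^1$-bundle $q$ --- is false in algebraic cobordism: already for $q:\PP^1\to\pt$ one has $q_*(1)=[\PP^1]=-a_{11}$, a generator of $\LL^{-1}\cong\ZZ$. (The factorisation $\pi'_n=\pi_n\circ q$ also fails: these maps record the $(r{+}1)$-st and $r$-th flags respectively, and those differ at position $i$.) In fact the ``otherwise'' clause of the lemma is itself false in $\Omega^*$: for $\uv=s_i$ your own fiber-square identity gives $\partial_i[Y_{s_i}\to\Fl_n]=[Y_{s_is_i}\to\Fl_n]=[\PP^1]\cdot[Y_{s_i}\to\Fl_n]\neq 0$, and formula~(\ref{ddo}) confirms $\partial_i(1)=-a_{11}+\cdots\neq 0$. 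The vanishing is a Chow-theoretic fact (there $[\PP^1]=0$), which is presumably the source of the clause; the paper's subsequent applications invoke only the reduced case, so nothing downstream is affected. As a statement in $\Omega^*$, only the first line is correct, and that is what \cite{HornbostelKiritchenkoCrelle} in fact establishes.
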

Since, as explained in Section \ref{subsec:flags}, the divided difference operators commute with the pullbacks $\iota_n^*$, we obtain the next corollary.
\begin{cor}\label{cor3-7}
Let $w\in W_n$ and set $v=w_0^{(n)}w$. Let $\uv$ be a reduced word of $v$. For any $i\in \ZZ_{>0}$, we have
\[
\partial_i \BS_w^{\uv} = \begin{cases}
\BS_{ws_i}^{\uv s_i} & \mbox{ if $\ell(ws_i)<\ell(w)$}\\
0 & \mbox{otherwise}.
\end{cases}
\]
\end{cor}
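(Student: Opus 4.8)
The plan is to compute $\partial_i\BS_w^{\uv}$ componentwise in the projective system and reduce everything to Lemma~\ref{lem3-3}. Recall that $\partial_i$ acts on $\calR$ componentwise: for $m$ with $n+m-1\geq i$, the $(n+m)$-th component of $\partial_i$ is the divided difference operator $\partial_i$ on $\Omega^*(\Fl_{n+m})$, and a stable class is determined by its components of large index. Hence it suffices to identify $\partial_i\bigl[Y_{\uc^{[n+m]}\uv}\to\Fl_{n+m}\bigr]$ for $m\gg 0$. By Lemma~\ref{lem3-3} this class equals $\bigl[Y_{\uc^{[n+m]}\uv\, s_i}\to\Fl_{n+m}\bigr]$ when $\uc^{[n+m]}\uv\, s_i$ is a reduced word, and $0$ otherwise.

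Next I would translate the reducedness condition into a statement about $w$. Iterating Lemma~\ref{lem-cn2} shows that $\uc^{[n+m]}\uv$ is a reduced word for $w_0^{(n+m)}w$, so $\uc^{[n+m]}\uv\, s_i$ is reduced precisely when $\ell(w_0^{(n+m)}w s_i)=\ell(w_0^{(n+m)}w)+1$. Using the identity $\ell(w_0^{(n+m)}u)=\ell(w_0^{(n+m)})-\ell(u)$ for all $u\in W_{n+m}$, this is equivalent to $\ell(w s_i)=\ell(w)-1$, i.e.\ $\ell(w s_i)<\ell(w)$; in particular the condition does not depend on $m$. (When $i\geq n$ one has $\ell(w s_i)=\ell(w)+1$ because $w\in W_n$ fixes all indices $\geq n$, so the condition fails and the right-hand side of the corollary is $0$, consistently.)

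Finally I would assemble the two cases. If $\ell(w s_i)\geq\ell(w)$, every component $\partial_i[Y_{\uc^{[n+m]}\uv}\to\Fl_{n+m}]$ vanishes, hence $\partial_i\BS_w^{\uv}=0$. If $\ell(w s_i)<\ell(w)$, then $i\leq n-1$ and $\ell(v s_i)=\ell(w_0^{(n)})-\ell(w s_i)=\ell(v)+1$, so $\uv\, s_i$ is a reduced word for $v s_i=w_0^{(n)}(w s_i)$ and $\BS_{w s_i}^{\uv s_i}$ is defined (via Theorem~\ref{thm:restriction}) as the stable class with components $[Y_{\uc^{[n+m]}(\uv s_i)}\to\Fl_{n+m}]$. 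Since $\uc^{[n+m]}(\uv s_i)=(\uc^{[n+m]}\uv)s_i$, these agree with the components of $\partial_i\BS_w^{\uv}$ computed above, so $\partial_i\BS_w^{\uv}=\BS_{w s_i}^{\uv s_i}$.

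There is no serious obstacle here: the corollary is essentially a formal consequence of Lemma~\ref{lem3-3} together with the commutation of $\partial_i$ with the pullbacks $\iota_n^*$. The only points requiring care are bookkeeping ones — that passing to the limit only constrains large-index components, the length identity for the longest element that converts reducedness of $\uc^{[n+m]}\uv\, s_i$ into the inequality $\ell(w s_i)<\ell(w)$, and the verification that $\uv\, s_i$ is itself reduced so that the symbol $\BS_{w s_i}^{\uv s_i}$ is meaningful.
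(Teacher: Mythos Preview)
Your argument is correct and is precisely the approach the paper has in mind: the corollary is stated there without proof, immediately after noting that the $\partial_i$ commute with the pullbacks $\iota_n^*$, so it is meant as the direct componentwise consequence of Lemma~\ref{lem3-3} that you have spelled out. Your careful translation of the reducedness of $\uc^{[n+m]}\uv\,s_i$ into the condition $\ell(ws_i)<\ell(w)$ via Lemma~\ref{lem-cn2} and the longest-element length identity, together with the check that $\uv s_i$ is itself reduced, fills in exactly the bookkeeping the paper leaves implicit.
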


\begin{rem}
In the connective $K$-theory of $\Fl_n$, the class $[Y_{\uv} \to \Fl_n]$ coincides with the class of the opposite Schubert variety $X^w$, provided that $v=w_0^{(n)}w$. Its associated class in the projective limit is represented by the Grothendieck polynomial $\frakG_w(x)$ associated to $w$.
\end{rem}
\subsection{Examples in infinitesimal cohomology}
Throughout this section we will consider infinitesimal cohomology instead of algebraic cobordism. In combination with Proposition \ref{RepPower}, the use of this simpler theory will allow us to perform an explicit computation of the stable Bott--Samelson classes in terms of power series in $x$.

As in Section \ref{subsec:algcob}, the formal group law and its formal inverse for the infinitesimal cohomology $I_2^*$ are given by
\[
F_{I_2}(x,y) =x\boxplus y =   (x+ y) (1+ \gamma xy), \ \ \ \ \ \chi_{I_2}(x)=\boxminus x = -x
\]
with $\gamma^2=0$. We denote $I_2^*(\pt)= \ZZ[\gamma]/(\gamma^2)$ by $\bbI$. As explained in Section \ref{subsec:algcob}, we have
\[
I_2^*(\Fl_n) \cong \bbI[x_1,\dots, x_n]/\bbS_n
\]
where $\bbS_n$ is the ideal generated by the homogeneous symmetric polynomials of strictly positive degree in $x_1,\dots, x_n$. We set 
\[
\calR_{\bbI} :=\calR\otimes_{\LL}\bbI = \bbI[x]_{bd}/\bbS_{\infty}.
\]
By specialising (\ref{ddo}) to this particular case we obtain that on $\calR_{\bbI}$ the divided difference operator $\partial_i$ is given by
\[
\partial_i f = \frac{f - s_i f}{x_i-x_{i+1}} \cdot (1+\gamma x_ix_{i+1}), \ \ \ \ \ f\in \calR_{\bbI}.
\]

\begin{rem}
\begin{itemize}
\item[(1)] If $f$ is symmetric in $x_i$ and $x_{i+1}$, then $\partial_i (fg) = 
 f\partial_i g$ for all $g\in \calR_{\bbI}$.
\item[(2)] If $|i-j|\geq 2$, then $\partial_i\partial_j = \partial_j\partial_i$.
\end{itemize}
\end{rem}
For a reduced word $\uv=s_{i_1}\cdots s_{i_r}$, let $\partial_{\uv}:=\partial_{i_r}\cdots \partial_{i_1}$. Recall that $\uc^{(n)}=s_1\cdots s_n$.
\begin{lem}\label{lem5-1}
For $n\geq 1$, we have
\[
\partial_{\uc^{(n)}} \left( x_1^nx_2^{n-1}\cdots x_n\right) = \left( x_1^{n-1}x_2^{n-2}\cdots x_{n-1}\right)(1+\gamma e_2(x_1,\dots, x_{n+1})).
\]
\end{lem}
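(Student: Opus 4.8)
The plan is to induct on $n$, removing one factor and one operator at a time. Since $\uc^{(n)}=s_1\cdots s_{n-1}s_n$ we have $\partial_{\uc^{(n)}}=\partial_n\circ\partial_{\uc^{(n-1)}}$, and the monomial in the statement factors as
\[
x_1^nx_2^{n-1}\cdots x_n=(x_1x_2\cdots x_n)\cdot(x_1^{n-1}x_2^{n-2}\cdots x_{n-1}),
\]
i.e.\ as the fully symmetric polynomial $x_1\cdots x_n$ times the monomial appearing in the statement for $n-1$. Because $x_1\cdots x_n$ is symmetric in $x_i$ and $x_{i+1}$ for every $i\le n-1$, the rule $\partial_i(fg)=f\partial_i g$ for $f$ symmetric in $x_i,x_{i+1}$ lets us pull this factor out of $\partial_{\uc^{(n-1)}}$ unchanged. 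Invoking the induction hypothesis then reduces the computation of $\partial_{\uc^{(n)}}(x_1^n\cdots x_n)$ to applying the single operator $\partial_n$ to
\[
(x_1\cdots x_n)(x_1^{n-2}\cdots x_{n-2})\left(1+\gamma e_2(x_1,\dots,x_n)\right)=(x_1^{n-1}x_2^{n-2}\cdots x_{n-1})\,x_n\left(1+\gamma e_2(x_1,\dots,x_n)\right).
\]
As the prefactor $x_1^{n-1}x_2^{n-2}\cdots x_{n-1}$ involves neither $x_n$ nor $x_{n+1}$, it too may be pulled out of $\partial_n$, so everything comes down to one key identity.

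\textbf{The key identity.} The crux is to show that, for all $n\ge 1$,
\[
\partial_n\left(x_n\left(1+\gamma e_2(x_1,\dots,x_n)\right)\right)=1+\gamma e_2(x_1,\dots,x_{n+1}).
\]
Here $\partial_n$ acts on $\calR_\bbI$ through the transposition $x_n\leftrightarrow x_{n+1}$, which is how the variable $x_{n+1}$, absent from the left-hand side, enters. To prove it I would use the recursion $e_2(x_1,\dots,x_n)=a+x_n b$ with $a=e_2(x_1,\dots,x_{n-1})$ and $b=e_1(x_1,\dots,x_{n-1})$, both free of $x_n$ and $x_{n+1}$, expand $x_n(1+\gamma(a+x_nb))=x_n+\gamma a\,x_n+\gamma b\,x_n^2$, and apply $\partial_n$ term by term via $\partial_n(x_n^k)=\frac{x_n^k-x_{n+1}^k}{x_n-x_{n+1}}(1+\gamma x_nx_{n+1})$, so that $\partial_n(x_n)=1+\gamma x_nx_{n+1}$ and $\partial_n(x_n^2)=(x_n+x_{n+1})(1+\gamma x_nx_{n+1})$. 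Collecting the results and using $\gamma^2=0$ to discard cross-terms yields $1+\gamma(a+bx_n+bx_{n+1}+x_nx_{n+1})$; recognising $a+bx_n=e_2(x_1,\dots,x_n)$ and $bx_{n+1}+x_nx_{n+1}=x_{n+1}e_1(x_1,\dots,x_n)$ identifies the coefficient of $\gamma$ as $e_2(x_1,\dots,x_{n+1})$.

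\textbf{Base case, assembly, and main obstacle.} The base case $n=1$ is the key identity itself (with $a=b=0$): $\partial_1(x_1)=1+\gamma x_1x_2=1+\gamma e_2(x_1,x_2)$, which matches the statement since the displayed monomial for $n=1$ is empty. The inductive step is just the concatenation of the two reductions of the first paragraph with the key identity. The only genuine difficulty is in the key identity, namely the $\gamma$-bookkeeping (remembering that $\gamma^2=0$ annihilates the cross-terms) together with the final reorganisation of the coefficient of $\gamma$ into $e_2(x_1,\dots,x_{n+1})$; the structural steps (factoring the monomial and passing symmetric factors through $\partial_{\uc^{(n-1)}}$ and through $\partial_n$) are routine. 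An essentially equivalent alternative is to induct on the number of operators applied, proving the interpolating formula $\partial_k\cdots\partial_1(x_1^n\cdots x_n)=(x_1^{n-1}\cdots x_k^{n-k})\,x_{k+1}^{n-k}\,(x_{k+2}^{n-k-1}\cdots x_n)(1+\gamma e_2(x_1,\dots,x_{k+1}))$ for $0\le k\le n$ and specialising $k=n$; the same key identity drives the step.
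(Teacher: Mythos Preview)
Your proof is correct and follows essentially the same approach as the paper: both argue by induction on $n$, factor the monomial as $(x_1\cdots x_n)(x_1^{n-1}\cdots x_{n-1})$ to pull the symmetric factor through $\partial_{\uc^{(n-1)}}$, apply the induction hypothesis, and then reduce to the same key identity $\partial_k\big(x_k(1+\gamma e_2(x_1,\dots,x_k))\big)=1+\gamma e_2(x_1,\dots,x_{k+1})$. Your verification of that identity is slightly more explicit than the paper's, which just cites two recursions for $e_2$ and calls it a straightforward computation, but the content is the same.
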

\begin{proof}
First we observe that
\begin{eqnarray}\label{eq5-1}
\partial_k(x_k(1+\gamma e_2(x_1,\dots,x_k)))
&=&1+\gamma e_2(x_1,\dots,x_{k+1})
\end{eqnarray}
which can be shown by a straightforward computation using the identities
\begin{eqnarray*}
e_2(x_1,\dots, x_k) &=& e_2(x_1,\dots, x_{k-1}) + x_k e_1(x_1,\dots, x_{k-1})\\
e_2(x_1,\dots, x_{k+1}) &=&e_2(x_k,x_{k+1}) +  e_2(x_1,\dots, x_{k-1}) +  e_1(x_1,\dots, x_{k-1})e_1(x_k,x_{k+1}).
\end{eqnarray*}
Now we prove the formula by induction on $n$. The case $n=1$ is obvious. 
If $n>1$, by induction hypothesis, we have
\begin{eqnarray*}
\partial_n\cdots\partial_1\left(  x_1^nx_2^{n-1}\cdots x_n\right)
&=&(x_1^{n-1}x_2^{n-2}\cdots x_{n-1})\partial_n(x_n(1+e_2(x_1,\dots,x_n))).
\end{eqnarray*}
Thus the claim follows from Equation \ref{eq5-1}.
\end{proof}
\begin{lem}\label{lem5-2}
Modulo $\bbS_N$, we have
\[
\sum_{k=n+1}^{N-1}  e_2(x_1,\dots, x_k) = - \sum_{i=n+1}^{N-1} (i-n)x_{i+1}e_1(x_1,\dots, x_i).
\]
\end{lem}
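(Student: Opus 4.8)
The plan is to work modulo $\bbS_N$, i.e.\ modulo the symmetric polynomials of positive degree in $x_1,\dots,x_N$, and to rewrite each $e_2(x_1,\dots,x_k)$ in terms of the ``boundary'' correction needed to reach $e_2(x_1,\dots,x_N)$, which vanishes mod $\bbS_N$. Concretely, I would use the elementary telescoping identity
\[
e_2(x_1,\dots,x_k) = e_2(x_1,\dots,x_N) - \sum_{j=k+1}^{N} x_j\, e_1(x_1,\dots,x_{j-1}),
\]
which follows by iterating $e_2(x_1,\dots,x_j) = e_2(x_1,\dots,x_{j-1}) + x_j e_1(x_1,\dots,x_{j-1})$ (the first identity already used in the proof of Lemma \ref{lem5-1}). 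Since $e_2(x_1,\dots,x_N)\equiv 0 \pmod{\bbS_N}$, this gives $e_2(x_1,\dots,x_k) \equiv -\sum_{j=k+1}^{N} x_j e_1(x_1,\dots,x_{j-1})$.

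Next I would sum this congruence over $k$ from $n+1$ to $N-1$ and interchange the order of summation. On the right-hand side one gets $-\sum_{k=n+1}^{N-1}\sum_{j=k+1}^{N} x_j e_1(x_1,\dots,x_{j-1})$; for a fixed $j$ in the range $n+2\le j\le N$, the index $k$ runs over $n+1\le k\le j-1$, contributing a multiplicity of $j-1-(n+1)+1 = j-n-1$ copies. Hence
\[
\sum_{k=n+1}^{N-1} e_2(x_1,\dots,x_k) \equiv -\sum_{j=n+2}^{N} (j-n-1)\, x_j\, e_1(x_1,\dots,x_{j-1}) \pmod{\bbS_N}.
\]
Finally, reindexing by $i = j-1$ (so $i$ runs from $n+1$ to $N-1$ and $j-n-1 = i-n$) turns the right-hand side into $-\sum_{i=n+1}^{N-1}(i-n)\, x_{i+1}\, e_1(x_1,\dots,x_i)$, which is exactly the claimed expression.

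There is no real obstacle here: the only things to be careful about are the bookkeeping of the summation bounds after swapping the order of summation (making sure the multiplicity $i-n$ and the index range $n+1\le i\le N-1$ come out correctly) and noting that the $j=N$ term $-(N-n-1)x_N e_1(x_1,\dots,x_{N-1})$ does appear after reindexing as the $i=N-1$ term, so the displayed upper limit $N-1$ on the right-hand side is consistent. The use of $e_2(x_1,\dots,x_N)\equiv 0$ is the only place where ``modulo $\bbS_N$'' is invoked, and no other symmetric function needs to be discarded.
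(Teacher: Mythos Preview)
Your proof is correct and follows essentially the same approach as the paper: both use the telescoping identity $e_2(x_1,\dots,x_N) = e_2(x_1,\dots,x_k) + \sum_{i=k}^{N-1} x_{i+1} e_1(x_1,\dots,x_i)$, reduce modulo $\bbS_N$, swap the order of summation, and count multiplicities. The only difference is cosmetic (you index with $j$ and then reindex by $i=j-1$, whereas the paper works directly with the index $i$).
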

\begin{proof}
Let us begin by recalling the following identity of elementary symmetric polynomials
\[
e_2(x_1,\dots, x_N) = e_2(x_1,\dots, x_k) + \sum_{i=k}^{N-1} x_{i+1} e_1(x_1,\dots, x_i).
\]
Thus modulo $\bbS_N$, it follows that
\[
\sum_{k=n+1}^{N-1}  e_2(x_1,\dots, x_k) = -\sum_{k=n+1}^{N-1}\sum_{i=k}^{N-1} x_{i+1} e_1(x_1,\dots, x_i)
=-\sum_{i=n+1}^{N-1}\sum_{k=n+1}^{i}x_{i+1} e_1(x_1,\dots, x_i).
\]
The right hand side is the desired formula.
%
\end{proof}
For $w_0^{(n)}\in S_n$ the corresponding Schubert variety $X^{w_0^{(n)}}_{(n)}$ in $\Fl_n$ is a point, and so is the unique Bott--Samelson variety $Y_{1}^{(n)}$.  In $I_2^*(\Fl_n)$ we have
\[
\left[Y_1^{(n)} \to \Fl_n\right] =  x_1^{n-1}x_2^{n-2}\cdots x_{n-1}.
\]  
 The stable Bott--Samelson class $\BS_{w_0^{(n)}}^1$ introduced in Theorem \ref{thm:restriction} is given by the sequence
\[
\left[Y_{\uc^{(N-1)}\cdots \uc^{(n)}}^{(N)} \to \Fl_N\right] \in I_2^*(\Fl_N), \ \ \ N \geq n.
\]
By Lemma \ref{lem5-1} and \ref{lem5-2}, we can identify a formal power series representing this class in the ring $\calR_{\bbI}$ as follows.
\begin{thm}\label{thmH1}
In $\calR_{\bbI}$, we have
\begin{equation}\label{eq5-2}
\BS_{w_0^{(n)}}^{1}(x)=\left(  \prod_{i=1}^{n-1} x_i^{n-i}\right)  \left(1-\gamma \sum_{i=n+1}^{\infty} (i-n)x_{i+1}e_1(x_1,\dots, x_i)\right).
\end{equation}
\end{thm}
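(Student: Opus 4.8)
The strategy is to compute the stable class by applying the divided difference operators dictated by Corollary \ref{cor3-7} (or directly Lemma \ref{lem3-3}) to a known representative of $[Y_1^{(N)}\to\Fl_N]$ inside each $I_2^*(\Fl_N)$, and then to pass to the limit. We already know that $[Y_1^{(N)}\to\Fl_N]=x_1^{N-1}x_2^{N-2}\cdots x_{N-1}$ and that $\BS_{w_0^{(n)}}^{1}$ is represented for each $N\geq n$ by $[Y_{\uc^{(N-1)}\cdots\uc^{(n)}}^{(N)}\to\Fl_N]$. By Lemma \ref{lem3-3}, applying the composite $\partial_{\uc^{(n)}}\partial_{\uc^{(n+1)}}\cdots\partial_{\uc^{(N-1)}}$ to $[Y_1^{(N)}\to\Fl_N]$ produces exactly $[Y_{\uc^{(N-1)}\cdots\uc^{(n)}}^{(N)}\to\Fl_N]$, since $\uc^{(N-1)}\cdots\uc^{(n)}$ is a reduced word by repeated use of Lemma \ref{lem-cn2}. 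So the task reduces to evaluating
\[
\partial_{\uc^{(n)}}\cdots\partial_{\uc^{(N-1)}}\bigl(x_1^{N-1}x_2^{N-2}\cdots x_{N-1}\bigr)
\]
in $\bbI[x_1,\dots,x_N]/\bbS_N$.

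First I would iterate Lemma \ref{lem5-1}. Applying $\partial_{\uc^{(N-1)}}$ to $x_1^{N-1}\cdots x_{N-1}$ gives $\bigl(x_1^{N-2}\cdots x_{N-2}\bigr)\bigl(1+\gamma e_2(x_1,\dots,x_N)\bigr)$. Now $1+\gamma e_2(x_1,\dots,x_N)$ is symmetric in all variables $x_1,\dots,x_N$, so by Remark (1) after the display of $\partial_i$ it is pulled out of all subsequent $\partial$'s; applying $\partial_{\uc^{(N-2)}}$ to the remaining monomial $x_1^{N-2}\cdots x_{N-2}$ brings down a new factor $1+\gamma e_2(x_1,\dots,x_{N-1})$, which is symmetric in $x_1,\dots,x_{N-1}$, hence again pulled out of all later $\partial$'s, and so on. The factor $\gamma^2=0$ kills all cross terms, so after $N-n$ steps one obtains
\[
\partial_{\uc^{(n)}}\cdots\partial_{\uc^{(N-1)}}\bigl(x_1^{N-1}\cdots x_{N-1}\bigr)
=\Bigl(\prod_{i=1}^{n-1}x_i^{n-i}\Bigr)\Bigl(1+\gamma\sum_{k=n+1}^{N-1}e_2(x_1,\dots,x_k)\Bigr)
\]
in $\bbI[x_1,\dots,x_N]/\bbS_N$, where I must be careful about the range of the sum: each application of $\partial_{\uc^{(k)}}$ for $k$ running from $N-1$ down to $n$ contributes $e_2(x_1,\dots,x_{k+1})$, i.e. $e_2$ in $n+1,\dots,N$ variables. (A clean way to package the bookkeeping is a second, short induction on $N$, using Lemma \ref{lem5-1} for the inductive step and the symmetry remark to commute the already-produced symmetric factor past the new operator.)

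Next I would rewrite the error term using Lemma \ref{lem5-2}, which gives, modulo $\bbS_N$,
\[
\sum_{k=n+1}^{N-1}e_2(x_1,\dots,x_k)=-\sum_{i=n+1}^{N-1}(i-n)\,x_{i+1}e_1(x_1,\dots,x_i),
\]
so that $[Y_{\uc^{(N-1)}\cdots\uc^{(n)}}^{(N)}\to\Fl_N]=\bigl(\prod_{i=1}^{n-1}x_i^{n-i}\bigr)\bigl(1-\gamma\sum_{i=n+1}^{N-1}(i-n)x_{i+1}e_1(x_1,\dots,x_i)\bigr)$ in $I_2^*(\Fl_N)$. Finally, to identify the element of $\calR_{\bbI}=\bbI[x]_{bd}/\bbS_{\infty}$, note that under Proposition \ref{RepPower} a stable class is determined by its images under the projections $p_N$ (setting $x_k=0$ for $k>N$), and the right-hand side of \eqref{eq5-2} is a bounded power series whose image under $p_N$ is precisely the expression just computed: setting $x_k=0$ for $k>N$ kills every summand with $i\geq N$ (each contains the factor $x_{i+1}$ with $i+1>N$) and leaves $\sum_{i=n+1}^{N-1}(i-n)x_{i+1}e_1(x_1,\dots,x_i)$. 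Since these match for all $N\geq n$, the two classes in $\calR_{\bbI}$ coincide, proving \eqref{eq5-2}.

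The main obstacle is the bookkeeping in the iterated computation of the second paragraph: getting the index range of the sum $\sum_{k}e_2(x_1,\dots,x_k)$ exactly right, and justifying rigorously that each freshly produced symmetric factor really does commute past all the remaining divided difference operators (this is where Remark (1) and $\gamma^2=0$ do the work, but it should be spelled out, ideally by phrasing the whole step as an induction on $N$ with hypothesis the boxed formula above). Everything else — the passage to the limit and the boundedness check — is immediate from Proposition \ref{RepPower} and the definition of $p_N$.
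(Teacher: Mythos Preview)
Your proposal is correct and follows essentially the same route as the paper's proof: use Lemma~\ref{lem3-3} to write $[Y_{\uc^{(N-1)}\cdots\uc^{(n)}}^{(N)}\to\Fl_N]$ as $\partial_{\uc^{(n)}}\cdots\partial_{\uc^{(N-1)}}(x_1^{N-1}\cdots x_{N-1})$, iterate Lemma~\ref{lem5-1} (using $\gamma^2=0$ and the symmetry of each new $e_2$ factor to commute it past the remaining operators), rewrite via Lemma~\ref{lem5-2}, and then identify the limit using Proposition~\ref{RepPower}. Your write-up is in fact slightly more explicit than the paper's on two points --- the justification that the produced $e_2$ factors slide past subsequent $\partial_{\uc^{(k)}}$'s, and the observation that the top term $e_2(x_1,\dots,x_N)$ vanishes modulo $\bbS_N$ so the sum runs only to $N-1$ --- but these are exactly the details the paper leaves to the phrase ``consecutive applications of Lemma~\ref{lem5-1}.''
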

\begin{proof}
In view of Lemma \ref{lem3-3},  we can compute the class $\left[Y_{\uc^{(N-1)}\cdots \uc^{(n)}}^{(N)} \to \Fl_N\right]$ via divided difference operators:
\begin{eqnarray*}
\left[Y_{\uc^{(N-1)}\cdots \uc^{(n)}}^{(N)} \to \Fl_N\right]&=&\partial_{\uc^{(n)}}\cdots \partial_{\uc^{(N-1)}}\left[Y_{1}^{(N)} \to \Fl_N\right]\\
&=&\partial_{\uc^{(n)}}\cdots \partial_{\uc^{(N-1)}}\left(x_1^{N-1}x_2^{N-2}\cdots x_{N-1}\right).
\end{eqnarray*}
Consecutive applications of Lemma \ref{lem5-1} give
\[
\left[Y_{\uc^{(N-1)}\cdots \uc^{(n)}}^{(N)} \to \Fl_N\right] = \left(  \prod_{i=1}^{n-1} x_i^{n-i}\right)  \left(1+\gamma \sum_{k=n+1}^{N-1} e_2(x_1,\dots, x_k)\right).
\] 
By Lemma \ref{lem5-2} we can rewrite this expression (modulo $\bbS_N$) as:
\begin{equation}\label{eq5-3}
\left[Y_{\uc^{(N-1)}\cdots \uc^{(n)}}^{(N)} \to \Fl_N\right] = \left(  \prod_{i=1}^{n-1} x_i^{n-i}\right)  \left(1-\gamma  \sum_{i=n+1}^{N-1} (i-n)x_{i+1}e_1(x_1,\dots, x_i)\right).
\end{equation}
The right hand side of (\ref{eq5-2}) is well-defined as an element of $\bbI[x]_{bd}$ and it projects to (\ref{eq5-3}) for all $N\geq n$. This completes the proof.
\end{proof}
In view of Corollary \ref{cor3-7}, all stable Bott--Samelson classes can be computed from (\ref{eq5-2}) by applying divided difference operators. More precisely, pick $w \in W_n$ and $\uv\in \uW_n$ such that $v = w_0^{(n)}w$. Then
\[
\BS_w^{\uv} = \partial_{\uv} \BS_{w_0^{(n)}}^{1}.
\]
Moreover, since the second factor of (\ref{eq5-2}) is symmetric in $x_1,\dots, x_n$, one simply has to identify $\partial_{\uv} \left( x_1^{n-1}x_2^{n-2} \cdots x_{n-1}\right)$. That is, 
\[
\BS_w^{\uv}  = \calB_n(x) \partial_{\uv}\left(x_1^{n-1}x_2^{n-2}\cdots x_{n-1}\right),
\]
where we denote
\[
\calB_n(x):=1-\gamma \sum_{i=n+1}^{\infty} (i-n)x_{i+1}e_1(x_1,\dots, x_i).
\]
Based on this, we will now obtain explicit closed formulas for the power series representing the stable Bott--Samelson classes associated to dominant permutations. 

\begin{defn}\label{defnDOM1}
For a permutation $w\in S_n$, consider a $n\times n$ grid with dots in the boxes $(i,w(i))$. The diagram of $w$ is the set of boxes that remain after deleting boxes weakly east and south of each dot. A permutation $w$ is called {\it dominant} if its diagram is located at the NW corner of the grid, and coincides with a Young diagram of a partition $\lambda=(\lambda_1,\dots,\lambda_r)$ with $\lambda_i\leq n-i$. For a given such partition $\lambda$, there is a unique dominant permutation $w_{\lambda} \in S_n$. For example, the longest element $w_0^{(n)}$ is dominant and its associated partition is $\rho:=(n-1,n-2,\dots,2,1)$.

Let $T$ be the standard tableau of $\rho$, {\it i.e.}, the fillings of the boxes of the $i$-th row of $T$ are all $i$. One places $\lambda$ at the NW corner of $\rho$ with its boxes shaded. We order the anti-diagonals starting from the  the inner ones to the outer ones, {\it i.e.,} the $i$-th anti-diagonal consists of boxes at $(a,b)$ with $a+b=n+2-i$. Let $m$ be the biggest number such that the $m$-th anti-diagonal contains unshaded boxes. Let $\uv^{(i)}, 1\leq i\leq m$ be the reduced word obtained by reading the numbers in the $i$-th anti-diagonal. Then $\uv:=\uv^{(1)}\cdots \uv^{(m)}$ is a reduced word of $v=w_0^{(n)} w_{\lambda}$. For each $i$, let $\bfx^{(i)}_k \ (k=1,\dots, a_i)$ be the orbits of $v^{(i)}$ in $\{x_1,\dots, x_n\}$ with cardinality greater than $1$.
\end{defn}
\begin{thm}
Let $w_{\lambda} \in S_n$ be the dominant permutation associated to the partition $\lambda=(\lambda_1,\dots,\lambda_r)$. Let $\uv:=\uv^{(1)}\cdots \uv^{(m)}$ be the reduced word of $v=w_0^{(n)}w_{\lambda}$ constructed in Definition \ref{defnDOM1}. We have
\[
\BS_{w_{\lambda}}^{\uv} = x_1^{\lambda_1}\cdots x_r^{\lambda_r} \left(1+\gamma \left(
\sum_{i=1}^m \sum_{k=1}^{a_i} e_2(\bfx^{(i)}_k)\right)\right)\calB_n(x).
\]
\end{thm}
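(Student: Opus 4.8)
The strategy is to reduce the computation of $\BS_{w_\lambda}^{\uv}$ to an iterated application of the identity established implicitly in Lemma \ref{lem5-1}, now carried out anti-diagonal by anti-diagonal rather than all at once as in Theorem \ref{thmH1}. By the discussion following Theorem \ref{thmH1} we know that
\[
\BS_{w_\lambda}^{\uv} = \calB_n(x)\, \partial_{\uv}\!\left(x_1^{n-1}x_2^{n-2}\cdots x_{n-1}\right),
\]
since the factor $\calB_n(x)$ is symmetric in $x_1,\dots,x_n$ and hence passes through every $\partial_i$ with $i\le n-1$. So the whole problem is the purely combinatorial identity
\[
\partial_{\uv^{(1)}}\cdots\partial_{\uv^{(m)}}\!\left(x_1^{n-1}\cdots x_{n-1}\right) = x_1^{\lambda_1}\cdots x_r^{\lambda_r}\Bigl(1+\gamma\sum_{i=1}^m\sum_{k=1}^{a_i}e_2(\bfx^{(i)}_k)\Bigr)
\]
in $\bbI[x_1,\dots,x_n]/\bbS_n$.

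\textbf{Key steps.} First I would record the single-variable building block: from Equation \ref{eq5-1}, $\partial_k\!\bigl(x_k(1+\gamma e_2(x_1,\dots,x_k))\bigr)=1+\gamma e_2(x_1,\dots,x_{k+1})$, and more generally one checks by the same elementary-symmetric manipulations that for a ``staircase'' monomial supported on a consecutive block of variables the operator $\partial_{s_a s_{a+1}\cdots s_b}$ peels off one variable and enlarges the $e_2$-correction by exactly the $e_2$ of the orbit just swept. Second, I would analyze what the reduced word $\uv^{(i)}$ coming from the $i$-th anti-diagonal of the shape $\rho$ (with $\lambda$ shaded out) actually does: the entries read off the $i$-th anti-diagonal of the standard tableau $T$ form a word in consecutive simple reflections whose orbits on $\{x_1,\dots,x_n\}$ are precisely the blocks $\bfx^{(i)}_1,\dots,\bfx^{(i)}_{a_i}$, with the shaded boxes of $\lambda$ being exactly the ones responsible for interrupting the staircase and producing the surviving monomial $x_1^{\lambda_1}\cdots x_r^{\lambda_r}$ instead of $1$. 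Third, I would run an induction on $i$ from $m$ down to $1$: applying $\partial_{\uv^{(m)}}$ first lowers each exponent in the relevant rows of the staircase monomial by one and introduces the terms $\gamma\sum_k e_2(\bfx^{(m)}_k)$; because each such correction is symmetric in its block and the later operators $\partial_{\uv^{(i)}}$, $i<m$, either fix that block or act on it only through symmetric-in-the-block expressions, the accumulated $\gamma$-corrections simply add up, giving the claimed double sum. At each stage one must also verify that dividing is legitimate, i.e. that after the previous step the monomial really is divisible by the appropriate $x_k$ and sits in the ``$x_k\cdot(1+\gamma e_2(\text{initial block}))$'' form required by Equation \ref{eq5-1}; this is where the combinatorics of the shape $\rho\setminus\lambda$ enters, guaranteeing that whenever $\partial_k$ is applied, $x_k$ appears to a positive power.

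\textbf{Main obstacle.} The genuine calculation in Theorem \ref{thmH1} is a special case ($\lambda=\emptyset$, a single sweep per value of $n$); the new difficulty here is bookkeeping the interaction between the shape $\lambda$ and the anti-diagonal reading order — precisely, showing that (a) the orbits of $v^{(i)}$ are exactly the nontrivial consecutive blocks $\bfx^{(i)}_k$ dictated by the unshaded boxes, (b) the surviving monomial is exactly $x_1^{\lambda_1}\cdots x_r^{\lambda_r}$, and (c) the $\gamma$-corrections from different anti-diagonals neither collide nor get killed modulo $\bbS_n$ when one operator acts after another. Point (c) is the crux: one needs that each partial product $e_2(\bfx^{(i)}_k)$, once created, is symmetric in the variables of its block and that every subsequent $\partial_j$ either leaves that block setwise invariant or touches it only multiplicatively through block-symmetric factors, so that Remark's item (1) (``$\partial_i(fg)=f\partial_i g$ for $f$ symmetric in $x_i,x_{i+1}$'') applies repeatedly and the corrections are merely transported, not altered. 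Granting the combinatorial description of $\uv$ from Definition \ref{defnDOM1}, the rest is a careful but routine induction built on Equation \ref{eq5-1} and Lemma \ref{lem5-1}.
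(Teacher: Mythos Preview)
Your overall strategy is exactly the paper's: reduce to computing $\partial_{\uv}(x_1^{n-1}\cdots x_{n-1})$, then peel off the anti-diagonals one at a time using Lemma~\ref{lem5-1}, pulling the accumulated $\gamma$-corrections through as symmetric factors. However, you have the order of the operators reversed. With the paper's convention $\partial_{\uv}=\partial_{i_r}\cdots\partial_{i_1}$ for $\uv=s_{i_1}\cdots s_{i_r}$, writing $\uv=\uv^{(1)}\cdots\uv^{(m)}$ gives
\[
\partial_{\uv}=\partial_{\uv^{(m)}}\cdots\partial_{\uv^{(1)}},
\]
so it is $\partial_{\uv^{(1)}}$ (the \emph{innermost} anti-diagonal) that is applied first, not $\partial_{\uv^{(m)}}$. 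This is not a cosmetic slip: in the first worked example $\uv=(s_2s_3s_4)(s_3)$, your displayed identity would compute $\partial_4\partial_3\partial_2\partial_3$ rather than the correct $\partial_3\partial_4\partial_3\partial_2$, and these operators differ.

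The reversal also breaks the induction you describe. Applying $\partial_{\uv^{(1)}}$ first is precisely what makes Lemma~\ref{lem5-1} applicable: the innermost anti-diagonal acts on the full staircase $x_1^{n-1}\cdots x_{n-1}$ and returns a shorter staircase times a $(1+\gamma e_2)$-factor. Applying $\partial_{\uv^{(m)}}$ first does not behave this way. In the same example, $\partial_3(x_1^4x_2^3x_3^2x_4)=x_1^4x_2^3x_3x_4(1+\gamma x_3x_4)$, and now $\partial_2$ neither passes through $(1+\gamma x_3x_4)$ (it is not symmetric in $x_2,x_3$) nor sends $x_2^3x_3$ to a monomial. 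The paper instead assumes inductively that $\partial_{\uv^{(m-1)}}\cdots\partial_{\uv^{(1)}}$ has produced $x^{\lambda'}\bigl(1+\gamma\sum_{i<m}\sum_k e_2(\bfx^{(i)}_k)\bigr)$ for the intermediate partition $\lambda'=\lambda\cup(n-m,\dots,1)$, and then uses the skew-shape property of $\rho/\lambda$ to argue that each simple reflection in $\uv^{(m)}$ fixes every earlier orbit $\bfx^{(i)}_k$ setwise, so the correction factor is symmetric in the relevant pair of variables and passes through $\partial_{\uv^{(m)}}$. Your point~(c) is the right idea but is pointed at the wrong inclusion of orbits; once you fix the order, your sketch becomes the paper's proof.
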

\begin{proof}
We prove the formula by induction on $m$. If $m=1$, then by Lemma \ref{lem5-1} we have
\[
\partial_{\uv^{(1)}}\left(x_1^{n-1}x_2^{n-2}\cdots x_{n-1}\right) = x_1^{\lambda_1}\cdots x_r^{\lambda_r} \left(1+\gamma \left(\sum_{k=1}^{a_1} e_2(\bfx^{(1)}_k)\right)\right).
\]
Now, let $m>1$. By the induction hypothesis, we have
\[
\partial_{\uv^{(m)}}\cdots \partial_{\uv^{(1)}}\left(x_1^{n-1}x_2^{n-2}\cdots x_{n-1}\right) 
=\partial_{\uv^{(m)}}\left(x_1^{\lambda_1'}\cdots x_r^{\lambda_r'} \left(1+\gamma \left(
\sum_{i=1}^{m-1} \sum_{k=1}^{a_i} e_2(\bfx^{(i)}_k)\right)\right)\right),
\]
where $\lambda'=\lambda \cup (n-m,n-m-1,\dots, 2,1,)$. Since the unshaded boxes form a skew shape $\rho/\lambda$, it follows that $\uv^{(m)}$ stabilizes the second factor, allowing it pass through $\partial_{\uv^{(m)}}$:
\[
\partial_{\uv^{(m)}}\cdots \partial_{\uv^{(1)}}\left(x_1^{n-1}x_2^{n-2}\cdots x_{n-1}\right) 
=\left(1+\gamma \left(
\sum_{i=1}^{m-1} \sum_{k=1}^{a_i} e_2(\bfx^{(i)}_k)\right)\right)\cdot\partial_{\uv^{(m)}}\left(x_1^{\lambda_1'}\cdots x_r^{\lambda_r'} \right).
\]
Now the desired formula follows again from Lemma \ref{lem5-1}.
\end{proof}
\begin{exm}
Consider $w_{\lambda}=(53124) \in S_5$ where $\lambda=(4,2)$. 
\setlength{\unitlength}{0.5mm}
\begin{center}
\begin{picture}(40,40)
\put(00,30){\textcolor{grey}{\rule{40\unitlength}{10\unitlength}}}
\put(00,20){\textcolor{grey}{\rule{20\unitlength}{10\unitlength}}}
\put(00,40){\line(1,0){40}}
\put(00,30){\line(1,0){40}}
\put(00,20){\line(1,0){30}}
\put(00,10){\line(1,0){20}}
\put(00,00){\line(1,0){10}}
\put(00,40){\line(0,-1){40}}
\put(10,40){\line(0,-1){40}}
\put(20,40){\line(0,-1){30}}
\put(30,40){\line(0,-1){20}}
\put(40,40){\line(0,-1){10}}
\put(03,32){1}\put(13,32){1}\put(23,32){1}\put(33,32){1}
\put(03,22){2}\put(13,22){2}\put(23,22){2}
\put(03,12){3}\put(13,12){3}
\put(03,02){4}
\end{picture}
\end{center}
The reduced word $\uv$ of $v=w_0^{(5)}w_{\lambda}$ is $\uv=(s_2s_3s_4)(s_3)$. We have 
\[
(\partial_3)(\partial_4\partial_3\partial_2)\left(x_1^4x_2^3x_3^2x_4\right) =(1+\gamma e_2([2,5])\cdot \partial_3\left(x_1^4x_2^2x_3\right)=x_1^4x_2^2\left(1+\gamma (e_2^{[2,5]}+e_2^{[3,4]})\right).
\]
\end{exm}
\begin{exm}
Consider $w_{\lambda}=(45123) \in S_5$ where $\lambda=(3,3)$.
\setlength{\unitlength}{0.5mm}
\begin{center}
\begin{picture}(40,40)
\put(00,30){\textcolor{grey}{\rule{30\unitlength}{10\unitlength}}}
\put(00,20){\textcolor{grey}{\rule{30\unitlength}{10\unitlength}}}
\put(00,40){\line(1,0){40}}
\put(00,30){\line(1,0){40}}
\put(00,20){\line(1,0){30}}
\put(00,10){\line(1,0){20}}
\put(00,00){\line(1,0){10}}
\put(00,40){\line(0,-1){40}}
\put(10,40){\line(0,-1){40}}
\put(20,40){\line(0,-1){30}}
\put(30,40){\line(0,-1){20}}
\put(40,40){\line(0,-1){10}}
\put(03,32){1}\put(13,32){1}\put(23,32){1}\put(33,32){1}
\put(03,22){2}\put(13,22){2}\put(23,22){2}
\put(03,12){3}\put(13,12){3}
\put(03,02){4}
\end{picture}
\end{center}
The reduced word of $v=w_0^{(5)}w_{\lambda}$ is $\uv=(s_1s_3s_4)(s_3)$. We have 
\[
(\partial_3)(\partial_4\partial_3\partial_1)\left(x_1^4x_2^3x_3^2x_4\right) 
= \left(1+\gamma (e_2^{[1,2]} + e_2^{[3,5]})\right)\partial_3\left(x_1^3x_2^3x_3\right)
= x_1^3x_2^3\left(1+\gamma(e_2^{[1,2]}+e_2^{[3,5]}+e_2^{[3,4]})\right).
\]
\end{exm}
\begin{exm}
Consider $w_{\lambda}=(563412) \in S_6$ where $\lambda=(4,4,2,2)$.
\setlength{\unitlength}{0.5mm}
\begin{center}
\begin{picture}(50,50)
\put(00,40){\textcolor{grey}{\rule{40\unitlength}{10\unitlength}}}
\put(00,30){\textcolor{grey}{\rule{40\unitlength}{10\unitlength}}}
\put(00,20){\textcolor{grey}{\rule{20\unitlength}{10\unitlength}}}
\put(00,10){\textcolor{grey}{\rule{20\unitlength}{10\unitlength}}}
\put(00,50){\line(1,0){50}}
\put(00,40){\line(1,0){50}}
\put(00,30){\line(1,0){40}}
\put(00,20){\line(1,0){30}}
\put(00,10){\line(1,0){20}}
\put(00,00){\line(1,0){10}}
\put(00,50){\line(0,-1){50}}
\put(10,50){\line(0,-1){50}}
\put(20,50){\line(0,-1){40}}
\put(30,50){\line(0,-1){30}}
\put(40,50){\line(0,-1){20}}
\put(50,50){\line(0,-1){10}}
\put(03,42){1}\put(13,42){1}\put(23,42){1}\put(33,42){1}\put(43,42){1}
\put(03,32){2}\put(13,32){2}\put(23,32){2}\put(33,32){2}
\put(03,22){3}\put(13,22){3}\put(23,22){3}
\put(03,12){4}\put(13,12){4}
\put(03,02){5}
\end{picture}
\end{center}
The reduced word of $v=w_0^{(6)}w_{\lambda}$ is $\uv=s_1s_3s_5$. We have 
\[
\partial_5\partial_3\partial_1\left(x_1^5x_2^4x_3^3x_4^2x_5\right) = (x_1x_2)^4(x_3x_4)^2\left(1+\gamma(e_2^{[1,2]}+e_2^{[3,4]}+e_2^{[5,6]})\right). 
\]
\end{exm}

\section{Restriction of Bott--Samelson classes}\label{sec:rest}
In this section we generalise the restriction formula in Theorem \ref{thm:restriction} of the previous section. Namely, we will prove a formula for the product of the cobordism class of any Bott--Samelson resolution with the class $[\Fl_{n-1} \to \Fl_n]$. In order to simplify the proof we will use another equivalent definition of Bott--Samelson resolutions.
\subsection{Bott--Samelson resolution revisited}
In this section, we provide another construction of the Bott--Samelson variety $X_\wu$ associated to a word $\wu$ by viewing it as a configuration space. This description will be better suited for our purposes.
\begin{defn}\label{def:BS2}
  Let $\wu = s_{i_1} \cdots s_{i_r}$ be a word in $\Wu_{n+1}$.
  \begin{enumerate}
\item For $a \in [0,n+1]$, define $\lo_\wu(a)$, the last occurence of $a$ in $\wu$, by
  $$\lo_\wu(a) = \sup\{k \in [1,r] \ | \ i_k = a \}.$$
Note that if the above set is empty, then $\lo_\wu(a) = - \infty$.

\item  If $\lo_\wu(a) = - \infty$, then we set $V_{\lo_\wu(a)} = \scal{e_1,\cdots,e_a}$.

\item For $k \in [1,r]$, define $\wu[k] = s_{i_1} \cdots s_{i_k}$.
  
\item For $k \in [1,r]$, the left and the right predecessors of $k$ in $\wu$, denoted $\lp_\wu$ and $\rp_\wu$, are defined as: 
$$\begin{array}{l}
      \lp_\wu(k) := \lo_{\wu[k]}(i_k - 1), \\
      \rp_\wu(k) := \lo_{\wu[k]}(i_k + 1).\\
  \end{array}$$

\item We set
  $$\begin{array}{l}
    V_{\lp_\wu(k)} = \scal{e_1,\cdots,e_{i_k - 1}} \textrm{ if } \lp_\wu(k) = - \infty \textrm{ and }\\
    V_{\rp_\wu(k)} = \scal{e_1,\cdots,e_{i_k + 1}} \textrm{ if } \rp_\wu(k) = - \infty. \\
  \end{array}$$
  \end{enumerate}
\end{defn}
\begin{defn}
  Given a word $\wu = s_{i_1} \cdots s_{i_r}$, define the Bott--Samelson variety $X_\wu$ as follows:
  $$X_\wu = \Big\{(V_k)_{k \in [1,r]} \ | \ \dim V_k = i_k \textrm{ and } V_{\lp_\wu(k)} \subset V_k \subset V_{\rp_\wu(k)} \Big\}.$$
  Define a morphism $\pi_\wu : X_\wu \to \Fl_{n+1}$ by $\pi_\wu((V_k)_{k \in [1,r]}) = (V_{\lo_\wu(a)})_{a \in [1,n]}$. If $\wu$ is reduced, then the map $\pi_\wu$ is a proper birational morphism from $X_\wu$ onto the Schubert variety $X_w$. In this reduced case, we often call $X_\wu$ together with the map $\pi_\wu$ a Bott--Samelson resolution.
\end{defn}
\begin{rem}
There is a natural isomorphism between our two construction of the Bott--Samelson variety $X_\wu$ and $Y_\wu$. It is given by $(V_k)_{k \in [1,r]} \mapsto (U_\bullet^{(k)})_{k \in [1,r]}$ with $U_a^{(k)} = V_{\lo_{\wu[k]}(a)}$ for all $k \in [1,r]$ and $a \in [1,n]$.
\end{rem}
Recall the following well known fact on Bott--Samelson varieties.
\begin{lem}\label{lem4-4}
The Bott--Samelson variety does not depend on the choice of a word modulo commuting relations. More precisely, if $\wu = \vu$ modulo commuting relations, then there is an isomorphism $f_{\wu,\vu} : X_\wu \to X_\vu$ such that the following diagram is commutative:
    $$\xymatrix{X_\wu \ar[r]^-{f_{\wu,\vu}} \ar[d]_-{\pi_\wu} & X_\vu \ar[d]_-{\pi_\vu} \\
    X_w \ar[r]^-{\rm Id} & X_v.}$$
\end{lem}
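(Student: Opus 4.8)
The statement is essentially combinatorial in nature once one uses the configuration-space model of Definition \ref{def:BS2}, so the plan is to exhibit an explicit map, check it is well-defined, and verify it is an isomorphism intertwining the two projections. Suppose $\wu = \vu$ modulo commuting relations; by factoring the sequence of commuting moves into a composition of single swaps, it suffices to treat the case where $\vu$ is obtained from $\wu = s_{i_1}\cdots s_{i_r}$ by swapping two adjacent letters $s_{i_\ell}$ and $s_{i_{\ell+1}}$ with $|i_\ell - i_{\ell+1}| \geq 2$. The general isomorphism $f_{\wu,\vu}$ is then the appropriate composite.

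\textbf{Key steps.} First I would observe that for a single commuting swap at position $\ell$, the combinatorial data attached to the two words agree ``everywhere except possibly in a controlled way.'' Concretely, writing $\wu = s_{i_1}\cdots s_{i_{\ell-1}} s_a s_b s_{i_{\ell+2}} \cdots s_{i_r}$ and $\vu = s_{i_1}\cdots s_{i_{\ell-1}} s_b s_a s_{i_{\ell+2}} \cdots s_{i_r}$ with $|a-b|\geq 2$, one checks: (i) for every index $k \notin \{\ell, \ell+1\}$ the values $\lp, \rp$ and $\lo$ of the relevant colors are literally unchanged (since the colors involved at position $k$ see the same last occurrences — swapping two positions of colors $a,b$ does not change, for any color $c$, the position of its last occurrence up to index $k$, because either $c\notin\{a,b\}$, or $c\in\{a,b\}$ in which case $\lp,\rp$ of $k$ only involve colors $i_k\pm 1$ which differ from $a$ and $b$ by the gap hypothesis — here one uses $|a-b|\ge 2$ crucially); and (ii) the two spaces $V_\ell^{\wu}, V_{\ell+1}^{\wu}$ are \emph{independent of each other} because the constraint $V_{\lp_{\wu}(\ell+1)}\subset V_{\ell+1}\subset V_{\rp_{\wu}(\ell+1)}$ does not refer to $V_\ell$ (again since $i_\ell = a \neq b\pm 1 = i_{\ell+1}\pm 1$), and symmetrically. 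Hence on the swapped block the two varieties impose the same pair of independent flag conditions, just in the opposite order. I would then define $f_{\wu,\vu}$ to be the identity on all coordinates $V_k$ with $k\notin\{\ell,\ell+1\}$ and to interchange the two coordinates at positions $\ell,\ell+1$ (relabeling so that the coordinate of dimension $a$ sits where $\vu$ expects dimension $a$). Step two is to verify that this map lands in $X_{\vu}$: this is exactly the bookkeeping in (i)–(ii) above, showing the defining inclusions of $X_{\vu}$ are satisfied. Step three is that the same recipe applied to the swap $\vu \to \wu$ gives the inverse, so $f_{\wu,\vu}$ is an isomorphism. Step four is commutativity of the diagram: $\pi_{\wu}$ and $\pi_{\vu}$ both read off $(V_{\lo(a)})_{a\in[1,n]}$, and by (i) the last-occurrence indices of each color $a$ produce the same coordinate before and after the swap (when the last occurrence of $a$ is one of the swapped positions, $f_{\wu,\vu}$ precisely moves that coordinate to the slot where $\vu$ reads it), so $\pi_{\vu}\circ f_{\wu,\vu} = \pi_{\wu}$; since $w = v$ as permutations, the bottom map is the identity on $X_w = X_v$. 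Finally, to get a \emph{canonical} $f_{\wu,\vu}$ for an arbitrary chain of commuting moves, I would note that any two chains connecting $\wu$ to $\vu$ differ by moves that commute with each other at the level of coordinate relabelings, so the resulting isomorphism is independent of the chain; alternatively one simply fixes one chain.

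\textbf{Main obstacle.} The only real subtlety — and the step I expect to require the most care — is step one, the claim that \emph{all} the incidence data ($\lp$, $\rp$, $\lo$, and which $V_j$'s appear in the defining inequalities) is unaffected by a commuting swap. This is where the hypothesis $|a - b| \geq 2$ does all the work: it guarantees that the color at one swapped position is never the ``left or right neighbor color'' relevant to the other, so the two flag conditions genuinely decouple and no predecessor pointer crosses the swap. Once this local independence is nailed down, well-definedness, invertibility, and compatibility with $\pi$ are all routine verifications. (A cleaner alternative worth mentioning: transport the statement through the isomorphism $X_\wu \cong Y_\wu$ of the preceding remark and argue directly on the original definition of $Y_{\uv}^{(n)}$, where a commuting swap $s_a s_b = s_b s_a$ visibly just reorders two mutually independent ``one-step'' modifications of the flag — but the configuration-space model makes the pointer bookkeeping most transparent.)
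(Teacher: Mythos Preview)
Your approach is essentially identical to the paper's: reduce to a single commuting swap $\uu_1 s_a s_b \uu_2 \leadsto \uu_1 s_b s_a \uu_2$, define $f_{\wu,\vu}$ by swapping the two coordinates at the swapped positions and leaving the rest alone, and then verify the defining inclusions and the compatibility with $\pi$ by bookkeeping on $\lp$, $\rp$, $\lo$.

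One imprecision to correct: your claim (i) that for $k\notin\{\ell,\ell+1\}$ the values $\lp_\wu(k),\rp_\wu(k)$ are \emph{literally unchanged} is not quite right, and your justification (``$i_k\pm 1$ differ from $a$ and $b$ by the gap hypothesis'') does not hold for arbitrary $k$ --- nothing prevents $i_k = a+1$, say, in which case $\lp_\wu(k)$ may well point to one of the swapped slots. The correct statement, which the paper proves, is that $\lp_\vu(k)$ equals $\lp_\wu(k)$ \emph{unless} $\lp_\wu(k)\in\{\ell,\ell+1\}$, in which case $\lp_\vu(k)$ is the \emph{other} of the two swapped positions; and then one checks $W_{\lp_\vu(k)} = V_{\lp_\wu(k)}$ precisely because $f_{\wu,\vu}$ swaps those two coordinates. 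You clearly have this picture in mind (your step four handles exactly this phenomenon for $\lo$), so just make the case split explicit in (i) and the argument goes through.
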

\begin{proof}
  It is enough to prove this result in the case in which $\wu$ and $\vu$ are obtained from each other by a unique commuting relation. The result then follows by induction on the number of commuting relations. Assume therefore that $\wu = \uu_1 s_a s_b \uu_2$ and $\vu = \uu_1 s_b s_a \uu_2$ with $| a - b | \geq 2$. Let $r_i = \ell(\uu_i)$ for $i \in [1,2]$ and set $r = \ell(\wu) = \ell(\vu) = r_1 + r_2 + 2$. Define the map $f_{\wu,\vu} : X_\wu \to X_\vu$ by $f_{\wu,\vu}((V_k)_{k \in [1,r]}) = (W_k)_{k \in [1,r]}$ and $f_{\vu,\wu}((W_k)_{k \in [1,r]}) = (V_k)_{k \in [1,r]}$ with $W_k = V_k$ for $k \not\in \{r_1 + 1,r_1 + 2\}$ and $W_{r_1 + \epsilon} = V_{r_1 + 3 - \epsilon}$ for $\epsilon \in \{1,2\}$. These maps are inverses of each other and we only need to check that they indeed map $X_\wu$ to $X_\vu$ and $X_\vu$ to $X_\wu$ respectively. By symmetry, we only need to check this for $f_{\wu,\vu}$.

  We prove that given $(V_k)_{k \in [1,r]} \in X_\wu$, the condition $W_{\lp_\vu(k)} \subset W_k$ is satisfied for all $k$. The other inclusion $W_k \subset W_{\rp_\vu(k)}$ is obtained by similar arguments. First note that we have the following relations:
  $$\begin{array}{ll}
    \lp_\vu(k) = \lp_\wu(k) & \textrm{ for } k , \lp_\wu(k) \not\in \{r_1 + 1 , r_1 + 2\} \\
    \lp_\vu(k) = r_1 + 3 - \epsilon & \textrm{ for } \lp_\wu(k) = r_1 + \epsilon \textrm{ and } \epsilon \in \{1,2\} \\
    \lp_\vu(r_1 + \epsilon) = \lp_\wu(r_1 + 3 - \epsilon) & \textrm{ for } \epsilon \in \{1,2\} \\
  \end{array}$$
  For $k$ such that $k , \lp_\wu(k) \not\in \{r_1 + 1 , r_1 + 2\}$, we have $W_{\lp_\vu(k)} = W_{\lp_\wu(k)} = V_{\lp_\wu(k)} \subset V_k = W_k$. For $\lp_\wu(k) = r_1 + \epsilon$ with $\epsilon \in \{1,2\}$, note that $k \not\in \{r_1+1,r_1+2\}$ thus we have $W_{\lp_\vu(k)} = W_{r_1 + 3 - \epsilon} = V_{r_1 + \epsilon} =V_{\lp_\wu(k)} \subset V_k = W_k$. Finally, for $k = r_1 + \epsilon$ with $\epsilon \in \{1,2\}$, note that $\lp_\wu(r_1 + 3 - \epsilon) \not\in \{r_1+1,r_1+2\}$ thus we have $W_{\lp_\vu(k)} = W_{\lp_\vu(r_1 + \epsilon)} = W_{\lp_\wu(r_1 + 3 - \epsilon)} = V_{\lp_\wu(r_1 + 3 - \epsilon)} \subset V_{r_1 + 3 - \epsilon} = W_{r_1 + \epsilon}  = W_k$.
Furthermore we have:
  $$\begin{array}{ll}
    \lo_\wu(a) = \lo_\vu(a) & \textrm{ if } \lo_\wu(a) \not\in \{r_1 + 1 , r_1 + 2\}  \\
    \lo_\wu(a) = \lo_\vu(a) + 3 - \epsilon & \textrm{ if } \lo_\wu(a) = r_1 + \epsilon \textrm{ for } \epsilon \in \{1,2\}, \\
\end{array}$$
so we easily see that we have $\pi_\vu \circ f_{\wu,\vu} = \pi_\wu$ and $\pi_\wu \circ f_{\vu,\wu} = \pi_\vu$.
\end{proof}
\subsection{Fiber product with a subflag}
We now prove a fiber product formula for Bott--Samelson resolutions.

Define
\begin{equation}
\bfF_n = \Big\{ U_\bullet \in \Fl_{n+1} \ | \ U_n = \scal{e_2,\cdots,e_{n+1}}=F^{(n+1)}_n \Big\}.
\end{equation}
We can easily see that $\bfF_n$ coincides with the opposite Schubert variety $X^c$ in $\Fl_{n+1}$ where  $c:=c^{(n)}=s_1\dots s_n$ is the Coxeter element. Therefore, from a well-known fact, we have that for $w\in W_{n+1}$
\begin{equation}
\mbox{$X_w \cap \bfF_n \neq \emptyset$ if and only if $c \leq w$}.
\end{equation}
\begin{defn}
For $\uu \in \Wu_{n+1}^1$ with $\uu = s_{i_1} \cdots s_{i_r}$, define $c^{-1}(\uu) \in \Wu_{n+1}^n$ by
\[
c^{-1}(\uu) = s_{c^{-1}(i_1)} \cdots s_{c^{-1}(i_r)} = s_{i_1 - 1} \cdots s_{i_r - 1},
\]
where we observe that for each $k \in [2,n+1]$ one has $c^{-1}(k) = k-1 \in [1,n]$.
\end{defn}
\begin{defn}
  Denote by $c$ the isomorphism $c : \kk^{n+1} \to \kk^{n+1}$ defined by $c(e_i) = e_{c(i)}$ for all $i \in [1,n+1]$.
  \begin{enumerate}
  \item The map $c$ induces an isomorphism $c : \Fl_{n} \to \bfF_n \subset \Fl_{n+1}$.    \item For $\wu \in W_{n+1}^n$ with $\ell(\wu) = r$, define
    $$c(X_\wu) = \Big\{ (c(V_k))_{k \in [1,r]} \ | \ (V_k)_{k \in [1,r]} \in X_\wu\Big\}$$
    and $c(\pi_\wu) : c(X_\wu) \to c(X_w) \subset c(\Fl_n) = \bfF_n$, so that the following diagram is commutative:
    $$\xymatrix{X_\wu \ar[r]^-c \ar[d]_-{\pi_\wu} & c(X_\wu) \ar[d]^-{c(\pi_\wu)} \\
    X_w \ar[r]^-c & c(X_w).}$$
  \end{enumerate}
\end{defn}
\begin{lem}
  For $\uu \in \Wu_{n+1}^1$ and $\vu \in \Wu_{n+1}^n$, define $\wu = \uu\, \uc\, \vu$ and $\wu' = c^{-1}(\uu)\vu$. Let $r_1 = \ell(\uu)$ and $r_2 = \ell(\vu)$. For $\a \in [1,n-1]$, we have the following equalities:
  \begin{enumerate}
    \item If $k \leq r_1$, then $\lo_{\wu'[k]}(\a) = \lo_{\wu[k]}(\a+1)$;
  \item If $k \geq r_1 + 1$ and $\lo_{\vu[k-r_1]}(\a) \neq - \infty$, then $\lo_{\wu'[k]}(\a) = \lo_{\wu[k+n]}(\a) - n = \lo_{\vu[k-r_1]}(\a) + r_1 \geq r_1 + 1$;
    \item If $k \geq r_1 + 1$ and $\lo_{\vu[k-r_1]}(\a) = - \infty$, then $\lo_{\wu'[k]}(\a) = \lo_\uu(\a + 1) = \rp_{\wu[k+n]}(r_1+\a) \leq r_1$ and $\lo_{\wu[k+n]}(\a) = r_1 + \a$.
  \end{enumerate}
\end{lem}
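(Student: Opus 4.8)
The plan is to prove all three identities by carefully unwinding the definition of $\lo$ (last occurrence) for the words $\wu = \uu\,\uc\,\vu$ and $\wu' = c^{-1}(\uu)\vu$, using the fact that $\uc = s_1 s_2 \cdots s_n$ and that $c^{-1}$ simply shifts every letter index down by one. I would first set up notation: write $\uu = s_{j_1}\cdots s_{j_{r_1}}$ with all $j_p \geq 2$, so $c^{-1}(\uu) = s_{j_1 - 1}\cdots s_{j_{r_1}-1}$, and write $\vu = s_{\ell_1}\cdots s_{\ell_{r_2}}$. Then $\wu$ has three blocks: positions $1,\dots,r_1$ carry the letters of $\uu$, positions $r_1+1,\dots,r_1+n$ carry $s_1,\dots,s_n$ (so the letter at position $r_1 + \a$ is $s_\a$), and positions $r_1+n+1,\dots,r_1+n+r_2$ carry the letters of $\vu$. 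Meanwhile $\wu'$ has two blocks: positions $1,\dots,r_1$ carrying $c^{-1}(\uu)$ and positions $r_1+1,\dots,r_1+r_2$ carrying $\vu$. The key bookkeeping observation is that position $k$ in $\wu'$ with $k \leq r_1$ corresponds to position $k$ in $\wu$ but with the letter index raised by one, while position $k > r_1$ in $\wu'$ corresponds to position $k + n$ in $\wu$ with the same letter.

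For part (1), with $k \leq r_1$: the prefix $\wu'[k]$ consists of letters $s_{j_1-1},\dots,s_{j_k-1}$, and $\lo_{\wu'[k]}(\a)$ is the largest $p \leq k$ with $j_p - 1 = \a$, i.e.\ $j_p = \a+1$; this is exactly $\lo_{\wu[k]}(\a+1)$ since $\wu[k] = s_{j_1}\cdots s_{j_k}$. For part (2), with $k \geq r_1+1$ and $\a$ occurring in $\vu[k-r_1]$: the last occurrence of $\a$ in $\wu'[k]$ must lie in the $\vu$-block (positions $> r_1$), because it occurs there by hypothesis and the later blocks always dominate; thus $\lo_{\wu'[k]}(\a) = r_1 + \lo_{\vu[k-r_1]}(\a)$. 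The same reasoning in $\wu$ places $\lo_{\wu[k+n]}(\a)$ in the $\vu$-block of $\wu$, which starts at position $r_1 + n + 1$, giving $\lo_{\wu[k+n]}(\a) = r_1 + n + \lo_{\vu[k-r_1]}(\a)$, and subtracting $n$ yields the claim; all three quantities are $\geq r_1+1$ since $\lo_{\vu[k-r_1]}(\a) \geq 1$. For part (3), with $k \geq r_1+1$ but $\a$ \emph{not} occurring in $\vu[k-r_1]$: then in $\wu'$ the last occurrence of $\a$ is back in the $c^{-1}(\uu)$-block, namely the last $p \leq r_1$ with $j_p = \a+1$, which is $\lo_\uu(\a+1)$; this is $\leq r_1$. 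In $\wu$, since $\a$ does not occur in the $\vu$-block of $\wu[k+n]$, and since $\a$ does occur in the middle Coxeter block at position $r_1 + \a$, we get $\lo_{\wu[k+n]}(\a) = r_1 + \a$ (noting $\a \leq n-1 < n$ so no later letter in the Coxeter block equals $s_\a$). The identification $\lo_\uu(\a+1) = \rp_{\wu[k+n]}(r_1+\a)$ then follows because $\rp_{\wu[k+n]}(r_1+\a) = \lo_{\wu[r_1+\a]}(\a+1)$ by definition of the right predecessor (the letter at position $r_1+\a$ being $s_\a$), and $\wu[r_1+\a]$ consists of the $\uu$-block followed by $s_1,\dots,s_{\a-1}$, so the last occurrence of $\a+1$ in it must be in the $\uu$-block and equals $\lo_\uu(\a+1)$ (using $\a+1 \geq 3 > \a - 1$ actually one needs $\a + 1 > \a$, so indeed $s_{\a+1},\dots$ do not appear among $s_1,\dots,s_{\a-1}$, and we need $\uu \in \Wu_{n+1}^1$ only to have $\lo_\uu$ make sense — actually $\a + 1$ can be anything in $[2,n+1]$ here).

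The main obstacle I anticipate is the combinatorial care needed in part (3): one must simultaneously track that $\a$ reappears at the single correct position $r_1 + \a$ of the Coxeter block of $\wu$, that no letter of the Coxeter block after position $r_1+\a$ equals $s_\a$ (this uses $\a \leq n-1$, i.e.\ the hypothesis $\a \in [1,n-1]$, so that the only $s_\a$ in $s_1\cdots s_n$ is the $\a$-th letter), that $\a$ does not reappear in the $\vu$-block (the standing hypothesis), and that the right-predecessor identity $\rp_{\wu[k+n]}(r_1+\a) = \lo_\uu(\a+1)$ holds — which requires noticing that the index $i_k + 1$ at $k = r_1 + \a$ is $\a + 1$, and that the prefix $\wu[r_1+\a]$ contains the full block $\uu$ but only the partial Coxeter string $s_1\cdots s_{\a-1}$, none of whose letters is $s_{\a+1}$. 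Once this is laid out as a small table of cases it is routine, but the index arithmetic with the offsets $r_1$ and $n$ is where sign/off-by-one errors are most likely, so I would present it with an explicit diagram of the three blocks of $\wu$ and the two blocks of $\wu'$ and verify each equality against that picture. Everything else is a direct substitution into the definitions of $\lo$, $\lp$, $\rp$ given in Definition \ref{def:BS2}.
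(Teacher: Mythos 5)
Your proof is correct and follows essentially the same approach as the paper's: split into the three cases dictated by the block structure of $\wu$ and $\wu'$, and unwind the definitions of $\lo$ and $\rp$ directly. (Minor slip: the prefix $\wu[r_1+\a]$ ends with the partial Coxeter string $s_1\cdots s_\a$, not $s_1\cdots s_{\a-1}$; since $s_{\a+1}$ appears in neither, your conclusion $\lo_{\wu[r_1+\a]}(\a+1)=\lo_\uu(\a+1)$ is unaffected.)
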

\begin{proof}
If $k \leq r_1$, then $\wu'[k] = c^{-1}(\wu[k])$ and the result follows. Assume now that $k \geq r_1 + 1$. If $\lo_{\vu[k-r_1]}(\a) \neq - \infty$, then the last occurence of $\a$ in $\wu'[k]$ is obtained at a letter of $\vu[k-r_1]$ so that $\lo_{wu'[k]}(\a) = \lo_{\vu[k-r_1]}(\a) + r_1 = \lo_{\wu[k+n]}(\a) - n \geq r_1$. If $\lo_{\vu[k-r_1]}(\a) = - \infty$, the last occurence of $\a$ in $\wu'[k]$ is the last occurence of $\a$ in $c^{-1}(\uu)$ and we get the equalities $\lo_{\wu'[k]}(\a) = \lo_{c^{-1}(\uu)}(\a) = \lo_\uu(\a+1) \leq r_1$. On the other hand, the last occurence of $\a$ in $\wu[k+n]$ is obtained as the $\a$-th letter in $\uc$ thus $\lo_{\wu[k+n]}(\a) = r_1 + \a$. This also explains the equality $\lo_\uu(\a+1) = \rp_{\wu[k+n]}(r_1+\a)$.
\end{proof}
\begin{cor}
  For $\uu \in \Wu_{n+1}^1$ and $\vu \in \Wu_{n+1}^n$, define $\wu = \uu \uc \vu$ and $\wu' = c^{-1}(\uu)\vu$. Let $r_1 = \ell(\uu)$ and $r_2 = \ell(\vu)$. We have the following alternatives:
  \begin{enumerate}
  \item If $\lo_\vu(\a) \neq - \infty$, then $\lo_{\wu'}(\a) = \lo_\wu(\a) - n = \lo_\vu(\a) + r_1 \geq r_1 + 1$;
    \item If $\lo_\vu(\a) = - \infty$, then $\lo_{\wu'}(\a) = \lo_\uu(\a + 1) = \rp_\wu(r_1+\a)$ and $\lo_\wu(\a) = r_1 + \a$.
  \end{enumerate}
\end{cor}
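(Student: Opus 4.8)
The plan is to read this off from the preceding lemma by specializing it to the largest admissible index. I would set $k = r_1 + r_2 = \ell(\wu')$. With this choice $\wu'[k] = \wu'$, and since $\ell(\wu) = r_1 + n + r_2$ one has $\wu[k+n] = \wu$; moreover $\vu[k-r_1] = \vu[r_2] = \vu$, so $\lo_{\vu[k-r_1]}(\a) = \lo_\vu(\a)$. Assuming first that $r_2 \geq 1$, so that $k \geq r_1 + 1$, parts (2) and (3) of the lemma apply directly. If $\lo_\vu(\a) \neq -\infty$, part (2) yields $\lo_{\wu'}(\a) = \lo_\wu(\a) - n = \lo_\vu(\a) + r_1 \geq r_1 + 1$, which is alternative (1). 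If $\lo_\vu(\a) = -\infty$, part (3) yields $\lo_{\wu'}(\a) = \lo_\uu(\a+1) = \rp_\wu(r_1+\a)$ together with $\lo_\wu(\a) = r_1 + \a$, which is alternative (2).

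It then remains to treat the degenerate case $r_2 = 0$, in which $\vu$ is empty, $\wu = \uu\uc$, $\wu' = c^{-1}(\uu)$, and $\lo_\vu(\a) = -\infty$ for all $\a$; here $k \geq r_1+1$ is out of range for $\wu'$, so parts (2)--(3) are vacuous and a short direct check is needed. Part (1) of the lemma with $k = r_1$ gives $\lo_{\wu'}(\a) = \lo_{c^{-1}(\uu)}(\a) = \lo_\uu(\a+1)$. Since $\uu \in \Wu_{n+1}^1$ contains no $s_1$ and the letter $s_\a$ of $\uc = s_1\cdots s_n$ occurs in position $r_1 + \a$, i.e. after all of $\uu$, one reads off $\lo_\wu(\a) = r_1 + \a$; and $\rp_\wu(r_1+\a) = \lo_{\uu\, s_1\cdots s_\a}(\a+1) = \lo_\uu(\a+1)$ because $s_{\a+1}$ does not occur among $s_1,\dots,s_\a$. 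This is again alternative (2), which finishes the argument.

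I do not expect any substantive obstacle: the corollary is pure bookkeeping sitting on top of the lemma. The only places that warrant attention are (i) matching the indexing, namely that "$k$ maximal for $\wu'$" corresponds to "$k+n$ maximal for $\wu$" precisely because $\uc$ contributes exactly $n$ letters, and (ii) the separate but entirely elementary handling of the case $\vu = 1$, where the lemma's parts (2)--(3) do not see the relevant index.
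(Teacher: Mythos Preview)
Your proof is correct and follows the same approach as the paper, which simply says ``Apply the previous lemma with $k = r_1 + r_2$.'' You are in fact more careful than the paper: you explicitly handle the degenerate case $r_2 = 0$, where $k = r_1$ falls outside the range $k \geq r_1 + 1$ of parts (2)--(3) of the lemma, whereas the paper's one-line proof tacitly ignores this boundary case.
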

\begin{proof}
Apply the previous lemma with $k = r_1 + r_2$.
\end{proof}
\begin{cor}
For $\uu \in \Wu_{n+1}^1$ and $\vu \in \Wu_{n+1}^n$, define $\wu = \uu \uc \vu$ and $\wu' = c^{-1}(\uu)\vu$. Let $r_1 = \ell(\uu)$ and $r_2 = \ell(\vu)$.
We set $\lp_{\underline{x}}(-\infty) = -\infty$ and $\rp_{\underline{x}}(-\infty) = -\infty$ for any word $\underline{x}$.
\begin{enumerate}[A.]
\item We have the following formulas for $\lp_\wu$ and $\lp_{\wu'}$:
    \begin{enumerate}[$1.$]
    \item If $a \in [1,r_1]$, then $\lp_{\wu'}(a) = \lp_\wu(a) \leq r_1$.
    \item If $a > r_1$ and $\lp_\vu(a-r_1) \neq \infty$, then $\lp_{\wu'}(a) = \lp_\wu(a+n) - n > r_1$.
    \item If $a > r_1$ and $\lp_\vu(a - r_1) = - \infty$, then $\lp_\wu(a+n) \in [r_1 + 1,r_1 + n] \cup \{-\infty\}$ and $\lp_{\wu'}(a) = \rp_\wu(\lp_\wu(a+n)) \leq r_1$.
    \end{enumerate}

\item We have the following formulas for $\rp_\wu$ and $\rp_{\wu'}$:
    \begin{enumerate}[$1.$]
    \item If $a \in [1,r_1]$, then $\rp_{\wu'}(a) = \rp_\wu(a) \leq r_1$.
    \item If $a > r_1$ and $\rp_\vu(a-r_1) \neq \infty$, then $\rp_{\wu'}(a) = \rp_\wu(a+n) - n > r_1$.
      \item If $a > r_1$ and $\rp_\vu(a - r_1) = - \infty$, then $\rp_{\wu'}(a) = \rp_\wu(\rp_\wu(a+n)) \leq r_1$ and  $\rp_\wu(a+n) \in [r_1 + 1,r_1 + n] \cup \{-\infty\}$.
    \end{enumerate}

\item If $k \in [r_1 + 1, r_1 + n]$, then $\lp_\wu(k) \in [r_1 + 1,r_1 + n] \cup\{-\infty\}$ Furthermore, for $k \in [r_1 + 1, r_1 + n]$, we have the following formulas for $\lp_\wu$ and $\rp_{\wu}$:
\begin{enumerate}[$1.$]
\item If $\rp_\wu(k) > \rp_\wu(\lp_\wu(k))$, then  $\rp_\wu(\lp_\wu(k)) = \lp_\wu(\rp_{\wu}(k))$.
  \item If $\rp_\wu(k) < \rp_\wu(\lp_\wu(k))$, then $\rp_\wu(\rp_\wu(\lp_\wu(k))) = \rp_{\wu}(k)$.
\end{enumerate}
\end{enumerate}
\end{cor}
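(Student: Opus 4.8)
The plan is to unwind all three parts, via the definitions $\lp_\wu(k) = \lo_{\wu[k]}(i_k-1)$ and $\rp_\wu(k) = \lo_{\wu[k]}(i_k+1)$, into bookkeeping with last occurrences. Two elementary observations do most of the work: first, $\lp_\wu(k)$ and $\rp_\wu(k)$ depend only on the prefix $\wu[k]$, so that $\rp_\wu(j) = \rp_{\wu[m]}(j)$ and $\lp_\wu(j) = \lp_{\wu[m]}(j)$ whenever $m \geq j$; second, the boundary conventions $\lo_{\underline{x}}(0) = \lo_{\underline{x}}(n+1) = -\infty$, together with the facts that $s_1$ does not occur in $\uu$ and that $s_n$ occurs neither in $\vu$ nor in $c^{-1}(\uu)$ (hence not in $\wu'$ at all). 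I would then fix, once and for all, the dictionary between positions: writing $\uu = s_{j_1}\cdots s_{j_{r_1}}$ with $j_t \in [2,n]$ and $\vu = s_{\ell_1}\cdots s_{\ell_{r_2}}$ with $\ell_t \in [1,n-1]$, the word $\wu = \uu\,\uc\,\vu$ carries letter $j_t$ at position $t \leq r_1$, letter $t$ at position $r_1+t$ for $t \in [1,n]$, and letter $\ell_t$ at position $r_1+n+t$; whereas $\wu' = c^{-1}(\uu)\vu$ carries letter $j_t-1$ at position $t \leq r_1$ and letter $\ell_t$ at position $r_1+t$. Thus position $a$ of $\wu'$ is position $a$ of $\wu$ when $a \leq r_1$, and position $a+n$ of $\wu$ when $a > r_1$.

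Parts A and B then follow by feeding the preceding lemma the appropriate value of $\a$. For A.1 and B.1 ($a \leq r_1$) the letter of $\wu'$ at $a$ is $s_{j_a-1}$, so $\lp_{\wu'}(a) = \lo_{\wu'[a]}(j_a-2)$ and $\rp_{\wu'}(a) = \lo_{\wu'[a]}(j_a)$; part (1) of the lemma with $\a = j_a-2$, resp.\ $\a = j_a$ (the only boundary values $0$ and $n$ being harmless, both sides vanishing to $-\infty$ there), converts these into $\lo_{\wu[a]}(j_a-1) = \lp_\wu(a)$, resp.\ $\lo_{\wu[a]}(j_a+1) = \rp_\wu(a)$, both visibly $\leq r_1$. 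For A.2--A.3 and B.2--B.3 ($a > r_1$) the letter is $s_{\ell_{a-r_1}}$, and one applies part (2) or (3) of the lemma at $k = a$ with $\a = \ell_{a-r_1}-1$, resp.\ $\a = \ell_{a-r_1}+1$, observing first that $\lo_{\vu[a-r_1]}(\a)$ is exactly $\lp_\vu(a-r_1)$, resp.\ $\rp_\vu(a-r_1)$; the hypothesis that this is $\neq -\infty$ keeps $\a$ in $[1,n-1]$, and the one boundary value escaping that range in the complementary case (namely $\a = n$, forced precisely when $\ell_{a-r_1} = n-1$) is dispatched by hand using that $s_n$ is absent from $\wu'$ and occurs in $\wu$ only at position $r_1+n$. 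Part (3) moreover gives $\lo_{\wu[a+n]}(\a) = r_1+\a$, which identifies $\lp_\wu(a+n)$, resp.\ $\rp_\wu(a+n)$, with $r_1+\a \in [r_1+1,r_1+n]$, while $\lo_{\wu'[a]}(\a) = \rp_{\wu[a+n]}(r_1+\a) = \rp_\wu(r_1+\a)$ by prefix-independence; this is exactly the claimed $\rp_\wu(\lp_\wu(a+n))$, resp.\ $\rp_\wu(\rp_\wu(a+n))$.

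I expect Part C to be the main obstacle, since it does not pass through the lemma and instead uses directly that on the block $[r_1+1,r_1+n]$ the word $\wu$ reads $s_1 s_2\cdots s_n$. For $k = r_1+b$ with $b \in [1,n]$, inspecting last occurrences inside $\wu[k] = \uu\, s_1\cdots s_b$ gives $\lp_\wu(k) = k-1$ when $b \geq 2$ and $\lp_\wu(k) = -\infty$ when $b = 1$ (so $\lp_\wu(k) \in [r_1+1,r_1+n]\cup\{-\infty\}$, the first assertion of Part C), and $\rp_\wu(k) = \lo_\uu(b+1) \leq r_1$, since $s_{b+1}$ can occur only inside $\uu$ (with the convention $\lo_\uu(n+1) = -\infty$). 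Writing $P = \lo_\uu(b)$ and $Q = \lo_\uu(b+1)$, one has, for $b \geq 2$, $\rp_\wu(k) = Q$ and $\rp_\wu(\lp_\wu(k)) = \rp_\wu(k-1) = P$. In case C.1 ($Q > P$) then $Q$ is a position $\leq r_1$ carrying $s_{b+1}$, so $\lp_\wu(Q) = \lo_{\wu[Q]}(b) = P$ because the last $s_b$ in $\uu$, at $P < Q$, already lies in $\wu[Q]$; this is $\rp_\wu(\lp_\wu(k)) = \lp_\wu(\rp_\wu(k))$. In case C.2 ($Q < P$) then $P$ carries $s_b$, so $\rp_\wu(P) = \lo_{\wu[P]}(b+1) = Q$ (vacuously if $Q = -\infty$, and because $Q < P$ lies in $\wu[P]$ otherwise), which is $\rp_\wu(\rp_\wu(\lp_\wu(k))) = \rp_\wu(k)$. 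The degenerate case $b = 1$ is handled the same way: $\rp_\wu(\lp_\wu(k)) = \rp_\wu(-\infty) = -\infty$, so if $\lo_\uu(2) \neq -\infty$ we are in C.1 and must check $-\infty = \lp_\wu(\lo_\uu(2))$, which holds since $s_1$ is absent from $\uu$; if $\lo_\uu(2) = -\infty$, neither strict inequality occurs, and C.2 is vacuous for $b=1$ in any case. The only genuine care throughout is in reconciling the boundary letter-values $0$, $n$, $n+1$ with the $-\infty$ conventions; once the position dictionary above is in place, the identities themselves are forced.
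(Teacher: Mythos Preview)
Your proposal is correct and follows essentially the same approach as the paper: both reduce Parts A and B to the preceding last-occurrence lemma (applied at $\a = j_a \pm 1$ or $\a = \ell_{a-r_1} \pm 1$, with boundary indices handled separately), and both argue Part C directly from the fact that the middle block of $\wu$ reads $s_1\cdots s_n$. Your introduction of $P = \lo_\uu(b)$ and $Q = \lo_\uu(b+1)$ in Part C is a cleaner packaging of exactly the computation the paper carries out inline, and your treatment of the boundary cases ($\a \in \{0,n\}$, $b=1$) is somewhat more explicit than the paper's, but there is no substantive difference in method.
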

\begin{proof}
Write $\uu = s_{i_1} \cdots s_{i_{r_1}}$ and $\vu = s_{i_{r_1 + n + 1}} \cdots s_{i_{r_1 + n + r_2}}$ so that $w = s_{i_1} \cdots s_{i_r}$ with $r = r_1 + r_2 + n$ and $s_{i_{r_1 + k}} = s_k$ for $k \in [1,n]$. We have $\wu' = s_{j_1} \cdots s_{j_{r_1 + r+2}}$ with
  $$j_k = \left\{\begin{array}{ll}
  i_k - 1 & \textrm{ for } k \in [1,r_1]\\
  i_{k+n} & \textrm{ for } k \in [r_1 + 1,r_1 + r_2].\\
\end{array}\right.$$

A.1. We have $\lp_{\wu'}(a) = \lo_{\wu'[a]}(j_a - 1) = \lo_{\wu[a]}((j_a - 1)+1) = \lo_{\wu[a]}(j_a) = \lo_{\wu[a]}(i_a - 1) = \lp_\wu(a)$. By definition $\lp_{\wu'}(a) < a \leq r_1$.

A.2. We have $\lp_{\wu'}(a) = \lo_{\wu'[a]}(j_a - 1) = \lo_{\wu[a+n]}(i_{a+n}-1) - n= \lp_\wu(a+n)-n \geq r_1 + 1$.

A.3. We have $\lp_{\wu'}(a) = \lo_{\wu'[a]}(j_a - 1) = \rp_{\wu[a+n]}(r_1 + j_a - 1) \leq r_1$ and $r_1 + j_a - 1 = \lo_{\wu[a+n]}(j_a-1)$. We get $\lp_{\wu'}(a) = \rp_{\wu[a+n]}(\lo_{\wu[a+n]}(j_a-1)) = \rp_{\wu[a+n]}(\lo_{\wu[a+n]}(i_{a+n}-1)) = \rp_{\wu[a+n]}(\lp_{\wu}(a+n))$.

For B.1. and B.2. use the proof of A.1. and A.2 with $\rp$ in place of $\lp$.

B.3. We have $\rp_{\wu'}(a) = \lo_{\wu'[a]}(j_a + 1) = \rp_{\wu[a+n]}(r_1 + j_a + 1) \leq r_1$ and $r_1 + j_a + 1 = \lo_{\wu[a+n]}(j_a+1)$. We get $\lp_{\wu'}(a) = \rp_{\wu[a+n]}(\lo_{\wu[a+n]}(j_a+1)) = \rp_{\wu[a+n]}(\lo_{\wu[a+n]}(i_{a+n}+1)) = \rp_{\wu[a+n]}(\rp_{\wu}(a+n))$.

C. If $k \in [r_1+1,r_1+n]$, then the $k$-th letter of $\wu$ is the $(k - r_1)$-th letter of $\uc$ and we have $\lp_\wu(k) = \lp_\uc(k-r_1) = k - r_1 - 1$ for $k > r_1 + 1$ and $\lp_{\wu}(r_1 + 1)  = -\infty$. Note that, if any of the two quatities $\rp_\wu(k)$ or  $\rp_\wu(\lp_\wu(k))$ is finite, we have $\rp_\wu(k) \neq \rp_\wu(\lp_\wu(k))$.

C.1. Assume $\rp_\wu(k) > \rp_\wu(\lp_\wu(k))$. If $k = r_1 + 1$, then $i_k = 1$ and $\lp_\wu(k) = -\infty$ thus $\rp_\wu(\lp_\wu(k)) = -\infty$. Furthermore, $i_{\rp_\wu(k)} = i_k + 1 = 2$ and since $\uu \in \Wu_{n+1}^1$ we have $\lp_\wu(\rp_\wu(k)) = \lo_\uu(2) = -\infty$. Assume now $k > r_1 + 1$, then $\lp_\wu(k) = k-1$, thus $\rp_\wu(\lp_\wu(k)) = \rp_\wu(k-1) = \lo_\uu(i_{k-1}+1)=\lo_\uu(i_k)$. We have $i_{\rp_\wu(k)} = i_k + 1$ thus $\lp_\wu(\rp_\wu(k)) = \lo_{\wu[\rp_\wu(k)]}(i_{\rp_\wu(k)} - 1) = \lo_{\wu[\rp_\wu(k)]}(i_k)$. Since $\rp_\wu(k) < k$, we have that $\wu[\rp_\wu(k)]$ is a subword of $\uu$ thus $\lo_{\wu[\rp_\wu(k)]}(i_k) \leq \lo_{\uu}(i_k) = \rp_\wu(\lp_\wu(k))$. On the other hand, since $\rp_\wu(k) > \rp_\wu(\lp_\wu(k))$, we have $\lo_{\wu[\rp_\wu(k)]}(i_k) \geq \lo_{\wu[\rp_\wu(\lp_\wu(k))]}(i_k) = \rp_\wu(\lp_\wu(k))$. The last equality holds since $i_{\rp_\wu(\lp_\wu(k))} = i_k$.

C.2. Assume $\rp_\wu(k) < \rp_\wu(\lp_\wu(k))$. This implies $\lp_\wu(k) \neq -\infty$ thus $k > r_1 + 1$. We have $\rp_\wu(k) = \lo_{\wu[k]}(i_k+1)$ and $\rp_\wu(\rp_\wu(\lp_\wu(k))) = \lo_{\wu[\rp_\wu(\lp_\wu(k))]}(i_k+1)$.
Since $\rp_\wu(\lp_\wu(k)) < k$, then $\wu[\rp_\wu(\lp_\wu(k))]$ is a subword of $\wu[k]$ thus $\lo_{\wu[\rp_\wu(\lp_\wu(k))]}(i_k+1) \leq \lo_{\wu[k]}(i_k+1)$. On the other hand, since $\rp_\wu(k) < \rp_\wu(\lp_\wu(k))$, we have $\lo_{\wu[\rp_\wu(\lp_\wu(k))]}(i_k+1) \geq \lo_{\rp_\wu[k]}(i_k+1)$. 
  \end{proof}
\begin{thm}\label{thm-fp}
Let $\wu$ be a reduced word and $w \in W$ the associated element.
\begin{enumerate}[$(1)$]
\item If $w \not \geq c$, then $X_\wu \times_{\Fl_{n+1}} \bfF_n$ is empty.
\item If $w\geq c$, there exist $\uu \in \Wu_{n+1}^1$ and $\vu \in \Wu_{n+1}^n$ such that $\wu = \uu\, \uc\, \vu$ modulo commuting relations and we have an isomorphism of $\Fl_{n+1}$-varieties $X_\wu \times_{\Fl_{n+1}} \bfF_n \simeq c(X_{c^{-1}(\uu)\vu})$.
\end{enumerate}
\end{thm}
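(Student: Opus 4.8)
The plan is to prove (1) and (2) by combining the geometric characterization of $\bfF_n = X^c$ (the opposite Schubert variety of the Coxeter element) with the combinatorial bookkeeping of the preceding corollary. For part (1), I would argue as follows: by Lemma \ref{lem4-4}, the isomorphism class of $X_\wu$ as a variety over $\Fl_{n+1}$ depends only on $w$, hence $X_\wu \times_{\Fl_{n+1}} \bfF_n$ depends only on $w$; since $\pi_\wu$ is surjective onto $X_w$, the fiber product $X_\wu \times_{\Fl_{n+1}} \bfF_n$ maps onto $X_w \cap \bfF_n$, which by the displayed fact (equation after the definition of $\bfF_n$) is empty precisely when $w \not\geq c$. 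This gives (1) immediately.

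For part (2), assume $w \geq c$. First, by Proposition \ref{prop:w=ucv} applied to $w$, every reduced word for $w$ decomposes modulo commuting relations as $\uw = \uu\,\uc\,\uv$ with $\uu \in \uW_{n+1}^1$ and $\uv \in \uW_n = \uW_{n+1}^n$; by Lemma \ref{lem4-4} again we may replace $\wu$ by this form without changing the fiber product up to $\Fl_{n+1}$-isomorphism. Set $\wu = \uu\,\uc\,\vu$ and $\wu' = c^{-1}(\uu)\vu$, with $r_1 = \ell(\uu)$, $r_2 = \ell(\vu)$, $r = r_1 + n + r_2$. The heart of the argument is to exhibit an explicit isomorphism between $X_\wu \times_{\Fl_{n+1}} \bfF_n$ and $c(X_{\wu'})$. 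A point of the fiber product is a tuple $(V_k)_{k\in[1,r]} \in X_\wu$ such that $\pi_\wu((V_k)) \in \bfF_n$, i.e. $V_{\lo_\wu(n)} = \scal{e_2,\dots,e_{n+1}}$. I would define the candidate map by sending such a tuple to $(c(V'_a))_{a \in [1,r_1+r_2]}$ where $V'_a := V_a$ for $a \leq r_1$ and $V'_a := V_{a+n}$ for $a > r_1$ — equivalently, one deletes the $n$ coordinates coming from the block $\uc$ and applies $c^{-1}$ to the remaining ones (so that after the global twist by $c$ everything lands in the right flags). The content is that the defining incidence conditions of $X_{\wu'}$ (namely $V_{\lp_{\wu'}(a)} \subset V_a \subset V_{\rp_{\wu'}(a)}$ and $\dim V_a = j_a$) translate exactly into the conditions defining $X_\wu$ together with the constraint that the $\uc$-block coordinates $V_{r_1+1} \subset \cdots \subset V_{r_1+n}$ are forced to equal $\scal{e_1}, \scal{e_1,e_2}, \dots$ — wait, rather, forced to be the \emph{specific} flag dictated by the $\bfF_n$ condition propagating backwards through $\uc$. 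This is precisely the content of parts A, B and C of the preceding Corollary: A.1–A.3 and B.1–B.3 show that the predecessor functions $\lp_{\wu'}, \rp_{\wu'}$ are obtained from those of $\wu$ by the index shift $a \mapsto a+n$ (when the predecessor lies in the $\vu$ or $\uu$ part) or by composing with one more $\rp_\wu$ (when it "passes through" the $\uc$-block), and C controls the internal combinatorics of the $\uc$-block; together they guarantee that the incidence relations match up term by term.

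Concretely, the key steps in order: (i) reduce to $\wu = \uu\,\uc\,\vu$ via Proposition \ref{prop:w=ucv} and Lemma \ref{lem4-4}; (ii) observe that on the fiber product the condition $V_{\lo_\wu(n)} = F^{(n+1)}_n$, pulled back through the chain $V_{r_1+1}\subset\cdots\subset V_{r_1+n}$ of the $\uc$-block together with the side conditions $V_{\lp_\wu(r_1+k)}\subset V_{r_1+k}\subset V_{\rp_\wu(r_1+k)}$ (analyzed exactly as in the proof of Theorem \ref{thm:restriction}, using $F^{(n+1)}_{k+1}\cap F^{(n+1)}_{k+1}$-type intersections together with part C of the Corollary), forces $V_{r_1+k} = \scal{e_1,\dots,e_k}$ for all $k\in[1,n]$; (iii) conclude that deleting these $n$ redundant coordinates gives a bijection with tuples $(V'_a)_{a}$ satisfying the $\wu'$-incidences after applying $c^{-1}$, using parts A and B of the Corollary to verify the predecessor indices transform correctly; (iv) check the map is a morphism of $\Fl_{n+1}$-varieties, i.e. $\pi_\wu = c(\pi_{\wu'})$ composed with the projection, which follows from the formulas for $\lo_{\wu'}$ versus $\lo_\wu$ in the first Corollary; (v) exhibit the inverse — insert the standard flag $\scal{e_1},\dots,\scal{e_1,\dots,e_n}$ back in the $\uc$-block and twist by $c$ — and verify it lands in $X_\wu\times_{\Fl_{n+1}}\bfF_n$, again by the Corollary.

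The main obstacle I expect is step (ii)/(iii): verifying that the $\bfF_n$-condition rigidifies the entire $\uc$-block to the standard partial flag, and that this rigidification is \emph{equivalent} to (not just implied by) membership in the fiber product, so that the correspondence is genuinely bijective with regular inverse. This requires carefully tracking how $\lp_\wu$ and $\rp_\wu$ behave on indices in $[r_1+1, r_1+n]$ and how they interact with indices outside that block — exactly the bookkeeping packaged in part C of the Corollary and in case A.3/B.3 — and showing no further constraints sneak in. Once that combinatorial matching is in place, the isomorphism of varieties (as opposed to just bijection of points) is formal, since both sides are closed subvarieties of products of flag varieties cut out by incidence conditions and the maps are given by deleting/inserting coordinates and applying the linear automorphism $c$.
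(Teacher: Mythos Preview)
Your overall architecture matches the paper's: reduce to $\wu = \uu\,\uc\,\vu$ via Proposition~\ref{prop:w=ucv} and Lemma~\ref{lem4-4}, build an explicit bijection deleting the $\uc$-block, and verify the incidence conditions with the combinatorial corollaries. Part~(1) is fine. However, in part~(2) your explicit maps are wrong in two coupled ways, and step~(ii) as stated is false.

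First, for $a \in [1,r_1]$ you set $V'_a := V_a$. But in $X_{\wu'}$ the $a$-th coordinate must have dimension $j_a = i_a - 1$, while $\dim V_a = i_a$; no amount of twisting by $c$ fixes a dimension mismatch. The correct assignment (and what the paper does) is $H_a := V_a \cap \scal{e_2,\dots,e_{n+1}}$: since $\uu \in \uW_{n+1}^1$, every $V_a$ with $a \leq r_1$ contains $\scal{e_1}$, so this intersection drops the dimension by exactly one. Dually, the inverse on the $\uu$-part is $H_a \mapsto H_a + \scal{e_1}$.

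Second, your step~(ii) asserts that the $\bfF_n$-condition forces $V_{r_1+k} = \scal{e_1,\dots,e_k}$ for $k \in [1,n]$. This is only true when $\uu$ is empty (the situation of Theorem~\ref{thm:restriction}); in general it is false. What actually happens is that for $k \in [r_1+1,r_1+n]$ one has $V_k \subset \scal{e_2,\dots,e_{n+1}}$ and $V_k \subset V_{\rp_\wu(k)}$ with $\rp_\wu(k) \leq r_1$, and since $\scal{e_1} \subset V_{\rp_\wu(k)}$ the dimension count forces
\[
V_k \;=\; V_{\rp_\wu(k)} \cap \scal{e_2,\dots,e_{n+1}}.
\]
So the $\uc$-block coordinates are determined, but by the $\uu$-coordinates rather than by the standard flag. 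Consequently your proposed inverse in step~(v), which reinserts the standard flag, does not land in $X_\wu$. (You nearly caught this yourself in the ``wait, rather'' aside, but then reverted to the wrong claim.) Once the two maps are corrected as above, the verification via parts A, B, C of the Corollary goes through exactly as you outline; also note that in this section the base flag for $X_\wu$ is the standard flag $\scal{e_1,\dots,e_a}$, not the $F_\bullet^{(n)}$ of Section~\ref{sec:stab}.
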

\begin{proof}
(1) is clear since the condition implies $X^w \cap \bfF_n = \emptyset$. We prove (2).  By Proposition \ref{prop:w=ucv}, we can write $\wu = \uu\, \uc\, \vu$. Let $\ell(w)=r$, and $\ell(u) = r_1$, $\ell(v) = r_2$ so that $r = r_1 + r_2 + n$. Since the obvious inclusion $\bfF_n \inc \Fl_{n+1}$ is a closed embedding, we can view $X_\wu \times_{\Fl_{n+1}} \bfF_n$ as the closed subvariety of $X_\wu$ given as follows:
$$X_\wu \times_{\Fl_{n+1}} \bfF_n = \Big\{ (V_k)_{k \in [1,r]} \in X_\wu \ | \ V_{r_1 + n} = \scal{e_2,\cdots,e_{n+1}} \Big\}.$$
Define the map $f : X_\wu \times_{\Fl_{n+1}} \bfF_n \to c(X_{c^{-1}(\uu)\vu})$ by $f((V_k)_{k \in [1,r]}) = (H_a)_{a \in [r_1 + r_2]}$ with
  $$H_a = \left\{\begin{array}{ll}
V_{a} \cap \scal{e_2,\cdots,e_{n+1}} & \textrm{ for } a \in [1, r_1] \\
V_{a + n} & \textrm{ for } a \in [r_1 + 1, r_1 + r_2]. \\
  \end{array}\right.$$
  Define the map $g : c(X_{c^{-1}(\uu)\vu}) \to X_\wu \times_{\Fl_{n+1}} \bfF_n$ by $g((H_a)_{a \in [r_1 + r_2]}) = (V_k)_{k \in [1,r]}$ with
\[
V_k = \left\{\begin{array}{ll}
  H_{k} + \scal{e_1} & \textrm{ for } k \in [1, r_1] \\
  V_{\rp_\wu(k)} \cap \scal{e_2,\cdots,e_{n+1}} & \textrm{ for } k \in [r_1 + 1, r_1 + n] \\
  H_{k - n} & \textrm{ for } k \in [r_1 + n + 1, r]. \\
  \end{array}\right.
  \]
  Set $\wu' = c^{-1}(\uu)\vu \in \Wu_{n+1}^n$. Write $\uu = s_{i_1} \cdots s_{i_{r_1}}$ and $\vu = s_{i_{r_1 + n + 1}} \cdots s_{i_{r_1 + n + r_2}}$ so that $w = s_{i_1} \cdots s_{i_r}$ with $s_{i_{r_1 + k}} = s_k$ for $k \in [1,n]$. We have $\wu' = s_{j_1} \cdots s_{j_{r_1 + r+2}}$ with
\[
j_k = \left\{\begin{array}{ll}
i_k - 1 & \textrm{ for } k \in [1,r_1]\\
i_{k+n} & \textrm{ for } k \in [r_1 + 1,r_1 + r_2].\\
\end{array}\right.
\]

    We first prove that these maps are well defined. We start with $f$. Note that if $(V_k)_{k \in [1,r]} \in X_\wu \times_{\Fl_{n+1}} \bfF_n$, then
  \[\begin{array}{ll}
    V_k \supset \scal{e_1} & \textrm{ for } k \in [1,r_1] \\
    V_k \subset \scal{e_2,\cdots,e_{n+1}} & \textrm{ for } k \in [r_1+1,r_1+r_2+n] \\
    V_k = V_{\rp_\wu(k)} \cap \scal{e_2,\cdots,e_{n+1}} & \textrm{ for } k \in [r_1 + 1,r_1+n]. \\
  \end{array}
  \]
  Indeed, since $\uu \in \Wu_1$, we have $\lp_\wu(k) = - \infty$ for any $k \in [1,r_1]$ with $i_k = 2$ implying our first claim. Furthermore, since $\vu \in \Wu_{n+1}$, by the same type of arguments we have the equality $V_{r_1+n} = \scal{e_2,\cdots,e_{n+1}}$ proving the second claim. In particular for $k \in [r_1 + 1,r_1+n]$, we have $V_k \subset V_{\rp_\wu(k)} \cap \scal{e_2,\cdots,e_{n+1}}$ and $\scal{e_1} \subset V_{\rp_\wu(k)}$. Since $\dim V_k = \dim V_{\rp_\wu(k)} - 1$ this proves the last equality.
  
  We check that $\dim H_a = j_a$. For $a \in [1,r_1]$, we have $\dim H_a = \dim(V_a \cap \scal{e_2,\cdots,e_{n+1}}) = \dim V_a - 1 = i_a - 1 = j_a$,
  where the second equality holds since $\uu \in \Wu_{n+1}^1$, therefore $\scal{e_1} \subset V_a$. For $a \in [r_1 + 1,r_1 + r_2]$, we have $\dim H_a = \dim V_{a+n} = i_{a+n} = j_a$.

  We now check the inclusions $H_{\lp_{\wu'}(a)} \subset H_a \subset H_{\rp_{\wu'}(a)}$. We start with the inclusions $H_{\lp_{\wu'}(a)} \subset H_a$. For $a \in [1,r_1]$, then $\lp_{\wu'}(a) = \lp_\wu(a) \leq r_1$ and we have $H_{\lp_{\wu'}(a)} = H_{\lp_{\wu}(a)} = V_{\lp_{\wu}(a)} \cap \scal{e_2,\cdots,e_{n+1}} \subset V_a \cap \scal{e_2,\cdots,e_{n+1}} = H_a$.
  For $a \in [r_1 + 1,r_1 + r_2]$ and $\lp_\vu(a - r_1) \neq - \infty$, then $\lp_{\wu'}(a) = \lp_\wu(a + n)-n \geq r_1+1$ and we have $H_{\lp_{\wu'}(a)} = H_{\lp_{\wu}(a+n)-n} = V_{\lp_{\wu}(a+n)} \subset V_{a+n}  = H_a$.
  For $a \in [r_1 + 1,r_1 + r_2]$ and $\lp_\vu(a) = - \infty$, then $\lp_{\wu'}(a) = \rp_\wu(\lp_\wu(a + n)) \leq r_1$ and $\lp_\wu(a+n) \in [r_1+1,r_1+n] \cup\{-\infty\}$. If $\lp_\wu(a+n) = - \infty$, then $\lp_{\wu'}(a) = -\infty$ and $H_{\lp_{\wu'}(a)} = 0$, so the inclusion holds. Otherwise, we have $H_{\lp_{\wu'}(a)} = H_{\rp(\lp_{\wu}(a))} = V_{\rp(\lp_{\wu}(a+n))} \cap \scal{e_2,\cdots,e_{n+1}} = V_{\lp_\wu(a+n)}  \subset V_a = H_a$.

  We prove the inclusions $H_k \subset H_{\rp_{\wu'}(a)}$. For $a \in [1,r_1]$, then $\rp_{\wu'}(a) = \rp_\wu(a) \leq r_1$ and we have $H_a = V_a \cap \scal{e_2,\cdots,e_{n+1}} \subset V_{\rp_{\wu}(a)} \cap \scal{e_2,\cdots,e_{n+1}} = H_{\rp_{\wu}(a)} = H_{\rp_{\wu'}(a)}$.
  For $a \in [r_1 + 1,r_1 + r_2]$ and $\rp_\vu(a - r_1) \neq - \infty$, then $\rp_{\wu'}(a) = \rp_\wu(a + n)-n \geq r_1+1$ and we have $H_a = V_{a+n} \subset V_{\rp_{\wu}(a+n)} = H_{\rp_{\wu}(a+n)-n} = H_{\rp_{\wu'}(a)}$.
  For $a \in [r_1 + 1,r_1 + r_2]$ and $\rp_\vu(a) = - \infty$, then $\rp_{\wu'}(a) = \rp_\wu(\rp_\wu(a + n)) \leq r_1$ and $\rp_\wu(a+n) \in [r_1+1,r_1+n]\cup\{-\infty\}$. If $\rp_\wu(a+n) = - \infty$, then $\rp_{\wu'}(a) = -\infty$ and $H_{\lp_{\wu'}(a)} = \scal{e_1,\cdots,e_{n+1}}$, so the inclusion holds. Otherwise, we have $H_a = V_{a+n} \subset V_{\rp_{\wu}(a+n)} = V_{\rp(\rp_{\wu}(a+n))} \cap \scal{e_2,\cdots,e_{n+1}} = H_{\rp_\wu(\rp_{\wu}(a+n))} = H_{\rp_{\wu'}(a)}$.

\medskip

We now prove that $g$ is well defined. Note that for all $a$, we have $H_a \subset \scal{e_2,\cdots,e_{n+1}}$. We first check the equalities $\dim V_k = i_k$ for all $k \in [1,r]$. For $k \in [1,r_1]$, we have $\dim V_k = \dim(H_k + \scal{e_1}) = \dim H_k + 1 = j_k + 1 = i_k$, where the second equality holds since $H_k \subset \scal{e_2,\cdots,e_{n+1}}$. For $k \in [r_1 + 1,r_1 + n]$, we have $\dim V_k = \dim(V_{\rp_\wu(k)} \cap \scal{e_2,\cdots,e_{n+1}}) = \dim V_{\rp_\wu(k)} - 1 = (i_k + 1) - 1 = i_k$, where the second equality holds since $\rp_\wu(k) \leq r_1$ for $k \in [r_1+1,r_1+n]$ thus $\scal{e_1} \subset V_{\rp_\wu(k)}$. For $k \in [r_1 + n + 1,r]$, we have $\dim V_k = \dim H_{k-n} = j_{k-n} = i_k$.

We now check, for $k \in [1,r]$, the inclusions $V_{\lp_{\wu}(k)} \subset V_k \subset V_{\rp_{\wu}(k)}$. We start with the inclusions $V_{\lp_{\wu}(k)} \subset V_k $. For $k \in [1,r_1]$, we have $\lp_\wu(k) = \lp_{\wu'}(k) \leq r_1$ thus $V_{\lp_{\wu}(k)} = V_{\lp_{\wu'}(k)} = H_{\lp_{\wu'}(k)} + \scal{e_1} \subset H_k + \scal{e_1} = V_k$. For $k \in [r_1 + 1, r_1 + n]$ and $\rp_\wu(k) > \rp_\wu(\lp_\wu(k))$, we have $\rp_\wu(\lp_\wu(k)) = \lp_\wu(\rp_{\wu}(k))$ and $\lp_\wu(k) \in [r_1 + 1,r_1 + n] \cup\{-\infty\}$. If $\lp_\wu(k) = -\infty$, then $V_{\lp_\wu(k)} = 0$ and the inclusion holds. Otherwise, we have $V_{\lp_\wu(k)} = V_{\rp_\wu(\lp_\wu(k))} \cap \scal{e_2,\cdots,e_{n+1}} =  V_{\lp_\wu(\rp_\wu(k))} \cap \scal{e_2,\cdots,e_{n+1}} \subset V_{\rp_\wu(k)} \cap \scal{e_2,\cdots,e_{n+1}} = V_k$.
For $k \in [r_1 + 1, r_1 + n]$ and $\rp_\wu(k) < \rp_\wu(\lp_\wu(k))$, we have $\rp_\wu(\rp_\wu(\lp_\wu(k))) = \rp_\wu(k) \leq r_1$. We have $V_{\lp_\wu(k)} = V_{\rp_\wu(\lp_\wu(k))} \cap \scal{e_2,\cdots,e_{n+1}} \subset V_{\rp_\wu(\rp_\wu(\lp_\wu(k)))} \cap \scal{e_2,\cdots,e_{n+1}} \subset V_{\rp_\wu(k)} \cap \scal{e_2,\cdots,e_{n+1}} = V_K$.
For $k \geq r_1 + n$ and $\lp_\vu(k - n - r_1) \neq - \infty$, we have $\lp_\wu(k) -n = \lp_{\wu'}(k - n) \geq r_1 + n + 1$ thus $V_{\lp_\wu(k)} = H_{\lp_\wu(k)-n} = H_{\lp_{\wu'}(k-n)} \subset H_{k-n} = V_k$. For $k \geq r_1 + n$ and $\lp_\vu(k - n - r_1) = - \infty$, we have $\rp_\wu(\lp_\wu(k)) = \lp_{\wu'}(k - n) \leq r_1 $ and $\lp_\wu(k) \in [r_1 + 1 , r_1 + n] \cup \{-\infty\}$. If $\lp_\wu(k) = - \infty$, then $V_{\lp_\wu(k)} = 0$ and the inclusion holds. Otherwise, we have $V_{\lp_\wu(k)} = V_{\rp_\wu(\lp_\wu(k))} \cap \scal{e_2,\cdots,e_{n+1}} = V_{\lp_{\wu'}(k-n)} \cap \scal{e_2,\cdots,e_{n+1}} = (H_{\lp_{\wu'}(k-n)} + \scal{e_1} ) \cap \scal{e_2,\cdots,e_{n+1}} = H_{\lp_{\wu'}(k-n)} \subset H_{k-n} = V_k$.

We finish with the inclusions $V_k \subset V_{\rp_{\wu}(k)}$. For $k \in [1,r_1]$, we have $\rp_\wu(k) = \rp_{\wu'}(k) \leq r_1$ thus $V_k = H_k + \scal{e_1} \subset H_{\rp_{\wu'}(k)} + \scal{e_1} = V_{\rp_{\wu'}(k)} = V_{\rp_{\wu}(k)}$.
For $k \in [r_1 + 1, r_1 + n]$, we have $V_k = V_{\rp_\wu(k)} \cap \scal{e_2,\cdots,e_{n+1}} \subset V_{\rp_\wu(k)}$.
For $k \geq r_1 + n$ and $\rp_\vu(k - n - r_1) \neq - \infty$, we have $\rp_\wu(k) - n = \rp_{\wu'}(k - n) \geq r_1 + n + 1$ thus $V_k = H_{k-n} \subset H_{\rp_{\wu'}(k-n)} = H_{\rp_\wu(k)-n} = V_{\rp_\wu(k)} $.
For $k \geq r_1 + n$ and $\rp_\vu(k - n - r_1) = - \infty$, we have $\rp_\wu(\rp_\wu(k)) = \rp_{\wu'}(k - n) \leq r_1 $ and $\rp_\wu(k) \in [r_1 + 1 , r_1 + n] \cup \{-\infty\}$. If $\rp_\wu(k) = - \infty$, then $V_{\lp_\wu(k)} = \scal{e_1,\cdots,e_{n+1}}$ and the inclusion holds. Otherwise, we have $V_k = H_{k - n} \subset H_{\rp_{\wu'}(k-n)} = V_{\rp_{\wu'}(k-n)} = V_{\rp_\wu(\rp_\wu(k))}$ But since $V_k = H_{k-n} \subset \scal{e_2,\cdots,e_{n+1}}$, we get $V_k \subset V_{\rp_\wu(\rp_{\wu}(k))} \cap \scal{e_2,\cdots,e_{n+1}} = V_{\rp_\wu(k)}$ where the last equality holds since $\rp_\wu(k) \in [r_1+1,r_1+n]$.

\medskip

Now we prove that $f$ and $g$ are inverse to each other and that $\pi_\wu = \pi_{\wu'} \circ c^{-1} \circ f$. We first prove that $g \circ f$ is the identity. Write $g \circ f((V_k)_{k \in [1,r]}) = (V'_k)_{k \in [1,r]}$ and $f((V_k)_{k \in [1,r]}) = (H_a)_{a \in [1,r_1+r_2]}$. For $k \in [1,r_1]$, we have $V'_k = H_k + \scal{e_1} = (V_k \cap \scal{e_2,\cdots,e_{n+1}}) + \scal{e_1}$.
But for such $k$, we have $\scal{e_1} \subset V_k$, this implies $(V_k \cap \scal{e_2,\cdots,e_{n+1}}) + \scal{e_1} = V_k$. For $k \in [r_1 + 1,r_1+n]$, we proceed by induction on $k$ and remark that $\rp_\wu(k) < k$. We have $V'_k = V'_{\rp_\wu(k)} \cap \scal{e_2,\cdots,e_{n+1}} = V_{\rp_\wu(k)} \cap \scal{e_2,\cdots,e_{n+1}} = V_k$. For $k \geq r_1 + n + 1$, we have $V'_k = H_{k-n} = V_k$. 

Next we prove that $f \circ g$ is the identity. Write $f \circ g((H_a)_{a \in [1,r_1+r_2]}) = (H'_a)_{a \in [1,r_1+r_2]}$ and $g((H_a)_{a \in [1,r_1+r_2]}) = (V_k)_{k \in [1,r]}$. For $a \in [1,r_1]$, we have $H'_a = V_a \cap \scal{e_2,\cdots,e_{n+1}} = (H_a + \scal{e_1}) \cap \scal{e_2,\cdots,e_{n+1}}$. But for such $a$, we have $H_a \subset \scal{e_2,\cdots,e_{n+1}}$ and this implies $(H_a + \scal{e_1}) \cap \scal{e_2,\cdots,e_{n+1}} = H_a$. For $a \in [r_1 + 1,r_1+r_2]$, we have $H'_a = V_{a+n} = H_a$.

Finally we check that $\pi_\wu = \pi_{\wu'} \circ c^{-1} \circ f$. Write $f((V_k)_{k \in [1,r]}) = (H_a)_{a \in [r_1+r_2]}$, $\pi_\wu((V_k)_{k \in [1,r]}) = (U_\a)_{\a \in [1,n]}$ and  $\pi_{\wu'}((H_a)_{a \in [1,r_1+r_2]}) = (U'_\a)_{\a \in [1,n]}$. We need to prove that $U_\a = U'_\a$ for all $\a \in [1,n]$. If $\lo_\vu(\a) \neq - \infty$, we have $\lo_{\wu'}(\a) = \lo_\wu(\a) - n \geq r_1 + 1$. We get $U'_\a = H_{\lo_{\wu'}(\a)} = V_{\lo_{\wu'}(\a)+n} = V_{\lo_\wu(\a)} = U_\a$. If $\lo_\vu(\a) = - \infty$, we have $\lo_{\wu'}(\a) = \lo_\uu(\a + 1) = \rp_\wu(r_1 + \a) \leq r_1$ and $\lo_\wu(\a) = r_1 + \a$. We get $U'_\a = H_{\lo_{\wu'}(\a)} = H_{\rp_\wu(r_1+\a)} = V_{\rp_\wu(r_1+\a)} \cap \scal{e_2;\cdots,e_{n+1}} = V_{r_1+\a} = V_{\lo_\wu(\a)} = U_\a$.
\end{proof}
\subsection{A product formula in cobordism}
As a consequence of Theorem \ref{thm-fp} we prove a product formula in the algebraic cobordism $\Omega^*(\Fl_{n+1})$.
\begin{cor}\label{coro-cobor}
Let $\wu$ be a reduced word and $w \in W$ the associated element.
\begin{enumerate}
\item If $w \not \geq c$, then $[X_\wu]\cdot [\bfF_n] = 0$ in $\Omega^*(\Fl_{n+1})$.
\item If $w \geq c$, 
there exist $\uu \in \Wu_{n+1}^1$ and $\vu \in \Wu_{n+1}^n$ such that $\wu = \uu\, \uc\, \vu$ modulo commuting relations and we have 
\[
[X_\wu] \cdot [\bfF_n]  = [X_{c^{-1}(\uu)\vu}].
\]
in $\Omega^*(\Fl_{n+1})$.
\end{enumerate}
\end{cor}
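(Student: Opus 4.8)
The plan is to deduce the formula from the geometric input of Theorem~\ref{thm-fp} by means of the projection formula and base change, the only additional ingredient being the (standard) triviality of the $\GL_{n+1}$-action on cobordism. First I would rewrite both factors as pushforwards of fundamental classes. Since $\bfF_n\cong\Fl_n$ is smooth and the inclusion $j\colon\bfF_n\inc\Fl_{n+1}$ is closed, hence projective, one has $[\bfF_n]=j_*(1_{\bfF_n})$; and $[X_\wu]=\pi_{\wu *}(1_{X_\wu})$ with $X_\wu$ smooth and projective. The projection formula then gives
\[
[X_\wu]\cdot[\bfF_n]=\pi_{\wu *}(1)\cdot j_*(1)=j_*\bigl(j^*\pi_{\wu *}(1)\bigr),
\]
so that everything reduces to computing $j^*[X_\wu]\in\Omega^*(\bfF_n)$.

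Next I would feed the Cartesian square of Theorem~\ref{thm-fp} into the base-change property of oriented cohomology theories. All four corners are smooth: for $Z:=X_\wu\times_{\Fl_{n+1}}\bfF_n$ this is because, by Theorem~\ref{thm-fp}, $Z$ is either empty or isomorphic to the Bott--Samelson variety $c(X_{c^{-1}(\uu)\vu})$. Moreover the square is transverse: one has $\codim(\bfF_n,\Fl_{n+1})=n$, while when $w\geq c$ the dimension of $Z$ equals $\ell(c^{-1}(\uu)\vu)=\ell(\uu)+\ell(\vu)=\ell(\wu)-n=\dim X_\wu-n$, which is the expected value; and when $w\not\geq c$ the images $\pi_\wu(X_\wu)=X_w$ and $\bfF_n=X^c$ are disjoint, so $Z=\emptyset$. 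Hence $j^*\pi_{\wu *}(1)=\pi'_*(1_Z)$, where $\pi'\colon Z\to\bfF_n$ is the induced projection, and therefore
\[
[X_\wu]\cdot[\bfF_n]=j_*\pi'_*(1_Z)=[\,Z\to\Fl_{n+1}\,].
\]

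It then remains to identify this last class. In case~$(1)$, $Z=\emptyset$ and the class is $0$. In case~$(2)$, the isomorphism of $\Fl_{n+1}$-varieties $Z\simeq c(X_{c^{-1}(\uu)\vu})$ from Theorem~\ref{thm-fp} gives $[\,Z\to\Fl_{n+1}\,]=[\,c(X_{\wu'})\to\Fl_{n+1}\,]$ with $\wu'=c^{-1}(\uu)\vu$. By the defining commutative square of $c(X_{\wu'})$, its structure morphism to $\Fl_{n+1}$ is $\pi_{\wu'}$ composed, after the identification of the source, with the automorphism of $\Fl_{n+1}$ induced by $c\in\GL_{n+1}(\sfk)$; since $\GL_{n+1}$ is connected, this automorphism acts as the identity on $\Omega^*(\Fl_{n+1})$ — one joins $c$ to the identity by a chain of affine lines in $\GL_{n+1}$ and invokes the extended homotopy property. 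Thus $[\,c(X_{\wu'})\to\Fl_{n+1}\,]=[X_{\wu'}\to\Fl_{n+1}]=[X_{c^{-1}(\uu)\vu}]$, which is the assertion.

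I expect the only genuinely delicate point to be the justification of the base-change step, namely verifying that the Cartesian square of Theorem~\ref{thm-fp} satisfies the transversality hypothesis required by the oriented cohomology axioms; this is precisely where the content of Theorem~\ref{thm-fp} — smoothness of the fiber product together with the dimension count — is used. The projection formula and the triviality of the $\GL_{n+1}$-action are routine, though the latter is worth making explicit since $c(X_{\wu'})$ and $X_{\wu'}$ are a priori genuinely different subvarieties of $\Fl_{n+1}$.
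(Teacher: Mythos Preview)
Your argument is correct and follows the same strategy as the paper: both identify $[X_\wu]\cdot[\bfF_n]$ with the class of the fiber product $X_\wu\times_{\Fl_{n+1}}\bfF_n$, invoke Theorem~\ref{thm-fp} for its description, and finish with the triviality of the $\GL_{n+1}$-action on $\Omega^*(\Fl_{n+1})$. The only difference is packaging: the paper expresses the product as the diagonal pullback of the exterior product and appeals to \cite[Corollary~6.5.5.1]{LevineMorel} directly, whereas you unwind this into the projection formula plus transverse base change and supply the transversality check (smoothness of the fiber product and the dimension count) explicitly.
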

 \begin{proof}
   The product $[X_\wu] \cdot [\bfF_n]$ is given by pulling back the exterior product $X_{\wu} \times \bfF_n \to \Fl_{n+1} \times \Fl_{n+1}$ along the diagonal map $\Delta : \Fl_{n+1} \to \Fl_{n+1} \times \Fl_{n+1}$, see \cite[Remark 4.1.14]{LevineMorel}. We thus have $[X_\wu] \cdot [\bfF_n] = \Delta^*[X_{\wu} \times \bfF_n \to \Fl_{n+1} \times \Fl_{n+1}]$. Applying \cite[Corollary 6.5.5.1]{LevineMorel}, we get $\Delta^*[X_\wu \times \bfF_n \to \Fl_{n+1} \times \Fl_{n+1}] = [X_\wu \times_{\Fl_{n+1}} \bfF_n]$ in $\Omega^*(X)$. The result follows since $[c(X_{c^{-1}(\uu)\vu})] = [X_{c^{-1}(\uu)\vu}]$.
\end{proof}
As a special case, we recover the restriction formula in Theorem \ref{thm:restriction} as a product formula.
\begin{cor}
  Let $\wu = \uc\,\vu$ be a reduced word with $\vu \in \Wu_{n+1}^n$. Then we have the following formula in $\Omega^*(\Fl_{n+1})$:
 $$[X_\wu] \cdot [\bfF_n]  = [X_\vu].$$ 
\end{cor}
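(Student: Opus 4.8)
The plan is to obtain this identity as the degenerate case $\uu = 1$ of Corollary~\ref{coro-cobor}(2). First I would check the hypothesis $w \geq c$: since $\wu = \uc\,\vu$ is a reduced word, it contains $\uc$ (the unique reduced word of $c$) as a subword, so the subword property of the Bruhat order gives $c \leq w$.

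Next I would apply Corollary~\ref{coro-cobor}(2) to the factorization $\wu = \uu\,\uc\,\vu$ with $\uu = 1$ the empty word, which lies in $\Wu_{n+1}^1$, and with the given $\vu \in \Wu_{n+1}^n$. The only point requiring attention is that Theorem~\ref{thm-fp} and Corollary~\ref{coro-cobor} apply verbatim to this factorization: putting $r_1 = \ell(\uu) = 0$, every clause indexed over $[1,r_1]$ is vacuous, the auxiliary $\lp/\rp$ identities reduce to trivialities, and the mutually inverse morphisms $f,g$ of Theorem~\ref{thm-fp} collapse. In particular $f$ simply forgets the first $n$ coordinates $(V_1,\dots,V_n)$ of a point of $X_\wu \times_{\Fl_{n+1}} \bfF_n$; these coordinates are anyway determined by the fiber-product condition $V_n = \scal{e_2,\dots,e_{n+1}}$, which forces $V_k = V_{\rp_\wu(k)} \cap \scal{e_2,\dots,e_{n+1}} = \scal{e_2,\dots,e_{k+1}}$ for $k \in [1,n]$, so $f$ identifies $X_\wu \times_{\Fl_{n+1}} \bfF_n$ with $c(X_\vu)$ as a variety over $\Fl_{n+1}$. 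Passing to cobordism classes exactly as in the proof of Corollary~\ref{coro-cobor} — pulling back the exterior product along the diagonal and invoking $[c(X_\vu)] = [X_\vu]$ — then gives $[X_\wu]\cdot[\bfF_n] = [X_{c^{-1}(\uu)\vu}]$, and since $c^{-1}$ sends the empty word to the empty word we have $c^{-1}(\uu)\vu = \vu$, which completes the proof.

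I do not expect a real obstacle here, as the statement is essentially a specialization; the only thing deserving care is the bookkeeping check just described, that $r_1 = 0$ breaks neither the hypotheses nor the constructions of Theorem~\ref{thm-fp} and Corollary~\ref{coro-cobor}. For the reader's orientation I would also note that this recovers Theorem~\ref{thm:restriction}: by the projection formula $\iota_{n*}\iota_n^*[X_\wu] = [X_\wu]\cdot[\iota_n(\Fl_n)\to\Fl_{n+1}]$, and $[\iota_n(\Fl_n)\to\Fl_{n+1}] = [\bfF_n]$ since $\bfF_n = c(\iota_n(\Fl_n))$ is the image of $\iota_n(\Fl_n)$ under an automorphism of $\Fl_{n+1}$, so the present product formula is the pushforward along $\iota_n$ of the pullback identity of Theorem~\ref{thm:restriction}.
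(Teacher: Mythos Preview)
Your proposal is correct and follows exactly the approach the paper intends: the corollary is stated without proof as the special case $\uu = 1$ of Corollary~\ref{coro-cobor}(2), and your verification that $c \leq w$ and that $c^{-1}(1)\vu = \vu$ is precisely the (trivial) content needed. The additional paragraph unwinding the $r_1 = 0$ case of Theorem~\ref{thm-fp} and the remark linking back to Theorem~\ref{thm:restriction} via the projection formula are more than the paper supplies, but they are accurate and helpful.
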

By reversing the order of the simple reflection $s_1,\cdots,s_n$ (or equivalently by conjugating with the element $w_0$) we also obtain the following results in $\Omega^*(\Fl_{n+1})$:
\begin{prop}
Let $\wu \in \Wu_{n+1}$ be a reduced word and $\uc' := s_n \cdots s_1$ a Coxeter element. Let $\bfF'_n = \{ U_\bullet \in \Fl_{n+1} \ | \ U_1 = \scal{e_{n+1}} \}$.
Then $w \not\geq c'$ is equivalent to $X_w \cap \bfF'_n = \emptyset$ and we have the following alternatives:
\begin{enumerate}
\item If $w \not\geq c'$, then $[X_\wu] \cdot [\bfF'_n]  =0$ in $\Omega^*(\Fl_{n+1})$.
\item If $w \geq c'$, then, modulo commuting relations, we have $\wu = \uu\, \uc' \vu$ with $\uu \in \Wu_{n+1}^1$ and $\vu \in \Wu_{n+1}^n$. Furthermore, we have
      $$[X_\wu] \cdot [\bfF'_n]  = [X_{{c'}^{-1}(\uu)\vu}]$$
      in $\Omega^*(\Fl_{n+1})$.
\end{enumerate}
In particular, if $\wu = \uc' \vu$ is reduced with $\vu \in W_{n+1}^1$, then we have the following formula in $\Omega^*(\Fl_{n+1})$:
\[
[X_\wu] \cdot [\bfF'_n]  = [X_\vu].
\] 
\end{prop}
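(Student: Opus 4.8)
The plan is to deduce this Proposition from Theorem~\ref{thm-fp} and Corollary~\ref{coro-cobor} by transporting everything through the diagram automorphism $s_i\mapsto s_{n+1-i}$ of $W_{n+1}$, realized geometrically on $\Fl_{n+1}$. Concretely, I would equip $\sfk^{n+1}$ with the symmetric bilinear form $\scal{e_i,e_j}=\delta_{i+j,\,n+2}$ and consider the involution $\tau:\Fl_{n+1}\to\Fl_{n+1}$, $U_\bullet=(U_a)_a\mapsto(U_{n+1-a}^\perp)_a$. The first step is to check that $\tau$ is an algebraic automorphism of $\Fl_{n+1}$ fixing the standard flag $E_\bullet$ and equivariant for the adjoint-inverse automorphism of $\GL_{n+1}$, which preserves both $B_{n+1}$ and $B_{n+1}^-$; from $\tau(w(E_\bullet))=(w_0ww_0)(E_\bullet)$ (a direct computation) one then gets $\tau(X_w)=X_{w_0ww_0}$ and $\tau(X^w)=X^{w_0ww_0}$. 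Since $\bfF_n=X^c$ with $c=s_1\cdots s_n$ and $w_0cw_0=s_n\cdots s_1=c'$, this yields $\tau(\bfF_n)=X^{c'}=\bfF'_n$, and the asserted equivalence ``$X_w\cap\bfF'_n\neq\emptyset\iff c'\leq w$'' is the image under $\tau$ of the corresponding statement for $\bfF_n$.

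The second step is to record how $\tau$ acts on Bott--Samelson resolutions, using the configuration-space model of Definition~\ref{def:BS2}. For a word $\wu=s_{i_1}\cdots s_{i_r}$ put $\wu^\vee:=s_{n+1-i_1}\cdots s_{n+1-i_r}$; this is an involution on words that preserves reducedness and commuting relations, satisfies $\uc^\vee=\uc'$, exchanges $\Wu_{n+1}^1$ and $\Wu_{n+1}^n$, and sends $c^{-1}(\uu)$ to ${c'}^{-1}(\uu^\vee)$. Using the identities $\lp_{\wu^\vee}(k)=\rp_\wu(k)$, $\rp_{\wu^\vee}(k)=\lp_\wu(k)$ and $\dim V_k^\perp=(n+1)-i_k$, together with the matching boundary conditions, the rule $(V_k)_k\mapsto(V_k^\perp)_k$ should define an isomorphism $X_\wu\xrightarrow{\ \sim\ }X_{\wu^\vee}$ commuting with $\pi_\wu$, $\pi_{\wu^\vee}$ and $\tau$. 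From this I would conclude, in $\Omega^*(\Fl_{n+1})$, that $\tau^*[X_\wu]=[X_{\wu^\vee}]$ (base change along the isomorphism $\tau$) and, since $\tau$ is an involution carrying $\bfF_n$ to $\bfF'_n$, that $\tau^*[\bfF'_n]=[\bfF_n]$.

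The conclusion is then formal. As $\tau^*$ is a ring automorphism, $\tau^*\big([X_\wu]\cdot[\bfF'_n]\big)=[X_{\wu^\vee}]\cdot[\bfF_n]$, and $w^\vee:=w_0ww_0\geq c$ if and only if $w\geq c'$. If $w\not\geq c'$, then $w^\vee\not\geq c$, so Corollary~\ref{coro-cobor}(1) gives $[X_{\wu^\vee}]\cdot[\bfF_n]=0$ and hence $[X_\wu]\cdot[\bfF'_n]=0$. If $w\geq c'$, then $w^\vee\geq c$, so Proposition~\ref{prop:w=ucv} and Corollary~\ref{coro-cobor}(2) applied to $\wu^\vee$ give, modulo commuting relations, a decomposition $\wu^\vee=\uu_0\,\uc\,\vu_0$ with $\uu_0\in\Wu_{n+1}^1$, $\vu_0\in\Wu_{n+1}^n$ and $[X_{\wu^\vee}]\cdot[\bfF_n]=[X_{c^{-1}(\uu_0)\vu_0}]$; applying $(\cdot)^\vee$ to both identities and using Step~2 yields the decomposition $\wu=\uu\,\uc'\,\vu$ of the statement (with $\uu=\uu_0^\vee$, $\vu=\vu_0^\vee$) and the formula $[X_\wu]\cdot[\bfF'_n]=[X_{{c'}^{-1}(\uu)\vu}]$, the last sentence being the case $\uu_0=1$. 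The step I expect to be the real work is Step~2: Steps~1 and~3 are a symmetry together with bookkeeping of the Bruhat order and of which simple reflections appear, but verifying that $\tau$ genuinely intertwines the resolution $\pi_\wu$ with $\pi_{\wu^\vee}$ forces one to push the $\lp/\rp$ predecessor combinatorics and the boundary spaces $\scal{e_1,\dots,e_{i_k-1}}$ through the substitution $i\mapsto n+1-i$ and through orthogonal complementation, and that is where the short but nontrivial computation lies.
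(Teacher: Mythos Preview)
Your approach is correct and is precisely the strategy the paper indicates: the paper's entire proof is the sentence immediately preceding the proposition, ``By reversing the order of the simple reflections $s_1,\cdots,s_n$ (or equivalently by conjugating with the element $w_0$) we also obtain the following results.'' You have simply supplied the details---the geometric realization of this diagram automorphism via the involution $\tau$, and the verification that it carries the Bott--Samelson resolution $\pi_\wu$ to $\pi_{\wu^\vee}$---which is exactly what is needed to transport Theorem~\ref{thm-fp} and Corollary~\ref{coro-cobor}.

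One remark worth making explicit: your argument in Step~3 produces $\uu=\uu_0^\vee\in\Wu_{n+1}^n$ and $\vu=\vu_0^\vee\in\Wu_{n+1}^1$, not $\uu\in\Wu_{n+1}^1$ and $\vu\in\Wu_{n+1}^n$ as printed in part~(2) of the statement. This is a typo in the paper rather than a flaw in your proof: the ``in particular'' clause already has $\vu\in\Wu_{n+1}^1$, and since $c'=c^{-1}$ one has ${c'}^{-1}(i)=i+1$, so the word ${c'}^{-1}(\uu)$ is only well-defined in $\Wu_{n+1}$ when $\uu$ avoids $s_n$. Your version is the correct one.
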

\bibliography{references}{}
\bibliographystyle{acm}

\end{document}